\numberwithin{equation}{section}
\theoremstyle{plain}
\newtheorem{thm}{Theorem}[section]
\newtheorem{lem}[thm]{Lemma}
\newtheorem{cor}{Corollary}[section]
\theoremstyle{definition}
\newtheorem{example}[thm]{Example}
\newtheorem{remark}[thm]{Remark}
\newtheorem{?}[thm]{Problem}
\def\N{\mathbb{N}}
\def\Z{\mathbb{Z}}
\def\S{\mathfrak{S}}
\def\D{\mathfrak{D}}
\def\L{\mathcal{LH}}
\def\B{\mathcal{B}}
\def\valley{\mathsf{val}}
\def\val{\mathsf{val}}
\def\peak{\mathsf{pk}}
\def\da{\mathsf{da}}
\def\dd{\mathsf{dd}}
\def\cvalley{\mathsf{cval}}
\def\cval{\mathsf{cval}}
\def\cpeak{\mathsf{cpk}}
\def\cpk{\mathsf{cpk}}
\def\cda{\mathsf{cda}}
\def\cdd{\mathsf{cdd}}
\def\lpeak{\mathsf{lpk}}
\def\lvalley{\mathsf{lval}}
\def\ldd{\mathsf{ldd}}
\def\lda{\mathsf{lda}}
\def\rpeak{\mathsf{rpk}}
\def\rvalley{\mathsf{rval}}
\def\rda{\mathsf{rda}}
\def\rdd{\mathsf{rdd}}
\def\sde{\mathsf{scda}}
\def\des{\mathsf{des}}
\def\asc{\mathsf{asc}}
\def\exc{\mathsf{exc}}
\def\drop{\mathsf{drop}}
\def\wex{\mathsf{wex}}
\def\inv{\mathsf{inv}}
\def\fix{\mathsf{fix}}
\def\cros{\mathsf{cros}}
\def\nest{\mathsf{nest}}
\def\icr{\mathsf{icr}}
\def\cab{31\text{-}2}
\def\bca{2\text{-}31}
\def\acb{13\text{-}2}
\def\bac{2\text{-}13}
\def\wexa{\mathsf{wex_A}}
\def\exca{\mathsf{exc_A}}
\def\wexc{\mathsf{wex_C}}
\def\fmax{\mathsf{fmax}}
\def\neg{\mathsf{neg}}
\def\Orb{\mathsf{Orb}}
\def\stat{\mathsf{stat}}
\def\cyc{\mathsf{cyc}}
\def\fmin{\mathsf{fmin}}
\def\amax{\mathsf{amax}}
\def\amin{\mathsf{amin}}
\def\fmax{\mathsf{fmax}}
\def\amax{\mathsf{amax}}
 \def\2MP{\mathsf{2\text{-}M}}
 \def\LH{\mathcal{LH}}
 \def\su{\textsc{U}}
 \def\sd{\textsc{D}}
 \def\lr{\textsc{L}_r}
 \def\la{\textsc{L}_r}
 \def\lb{ \textsc{L}_b}
 \def\ly{\textsc{L}_y}
 \def\lam{\lambda} 
\def\stan{\textrm{stan}}
 \DeclareMathOperator\Cval{\mathrm{Cval}}
\DeclareMathOperator\Cpeak{\mathrm{Cpk}}
\DeclareMathOperator\Cda{\mathrm{Cda}}
\DeclareMathOperator\Fix{\mathrm{Fix}}
\DeclareMathOperator\Cdd{\mathrm{Cdd}}
\def\DD{\mathrm{DD}\,}
\def\DE{\mathrm{DE}\,}
\def\SDE{\mathrm{SDE}\,}
\DeclareMathOperator\Wex{\mathrm{Wex}}
\DeclareMathOperator\Lval{\mathrm{Lval}}
\DeclareMathOperator\Lpeak{\mathrm{Lpk}}
\DeclareMathOperator\Lda{\mathrm{Lda}}
\DeclareMathOperator\Ldd{\mathrm{Ldd}}
\DeclareMathOperator\Val{\mathrm{Val}}
\DeclareMathOperator\Da{\mathrm{Da}}
\DeclareMathOperator\Dd{\mathrm{Dd}}
\DeclareMathOperator\Peak{\mathrm{Pk}}
\DeclareMathOperator\Drop{\mathrm{Drop}}
\DeclareMathOperator\Scval{\mathrm{Scval}}
\DeclareMathOperator\Scpeak{\mathrm{Scpk}}
\DeclareMathOperator\Sde{\mathrm{Scda}}
\DeclareMathOperator\Sdn{\mathrm{Scdn}}
\DeclareMathOperator\Exc{\mathrm{Exc}}
\def\s{\mathbf{s}}
\begin{document}
\title[Eulerian polynomials and excedance statistics]
{Eulerian polynomials and excedance statistics}
\author[B. Han]{Bin Han}

\address[Bin Han]{Univ Lyon, Universit\'e Claude Bernard Lyon 1, CNRS UMR 5208, Institut Camille Jordan, 43 blvd. du 11 novembre 1918, F-69622 Villeurbanne cedex, France}
\email{han@math.univ-lyon1.fr, han.combin@hotmail.com.}

\address[Bin Han]{{\itshape Current address}: 
Department of Mathematics, Bar-Ilan University, Ramat-Gan 52900, Israel.}
\email{han@math.biu.ac.il, han.combin@hotmail.com.}

\author[J. Mao]{Jianxi Mao}
\address[Jianxi Mao]{School of Mathematic Sciences, 
 Dalian University of Technology, Dalian 116024, P. R. China}
\email{maojianxi@hotmail.com}

\author[J. Zeng]{Jiang Zeng}

\address[Jiang Zeng]{Univ Lyon, Universit\'e Claude Bernard Lyon 1, CNRS UMR 5208, Institut Camille Jordan, 43 blvd. du 11 novembre 1918, F-69622 Villeurbanne cedex, France}
\email{zeng@math.univ-lyon1.fr}

\date{\today}

\begin{abstract} A formula of Stembridge states   that 
the permutation peak polynomials and descent  polynomials are connected 
via a quadratique transformation. The aim of this paper is to establish the cycle analogue of Stembridge's formula by using cycle peaks and excedances of permutations. We prove a series of new general formulae expressing polynomials counting permutations by various excedance statistics in terms of refined Eulerian polynomials.  Our formulae  are comparable with   Zhuang's generalizations [Adv. in Appl. Math. 90 (2017) 86-144] using descent  statistics of permutations.  
Our methods include permutation enumeration techniques 
involving variations of classical bijections from permutations to Laguerre histories,
 explicit continued fraction expansions of combinatorial generating functions in 
 Shin and Zeng~[European J. Combin. 33 (2012), no. 2, 111--127] 
 and cycle version of modified Foata-Strehl action. 
We also prove  similar  formulae for restricted permutations such as derangements and permutations avoiding certain patterns. Moreover, we provide new combinatorial interpretations for the $\gamma$-coefficients of the inversion polynomials restricted on $321$-avoiding permutations.

\end{abstract}

\subjclass[2010]{05A05, 05A15, 05A19}

\keywords{Eulerian polynomials; Peak polynomials;  Gamma-positivity; Derangement polynomials; $q$-Narayana polynomials; Continued fractions; Laguerre histories; Fran\c con-Viennot bijection; Foata-Zeilberger bijection; Cyclic modified Foata-Strehl action.}

\maketitle

\tableofcontents

\section{Introduction}\label{sec1: Intro}
Stieltjes~\cite{Sti89} showed that the Eulerian polynomials $A_n(t)$ can be defined 
through the continued fraction (S-fraction) expansion
\begin{align}\label{gf:eulerian}
 \sum_{n\geq 0}A_n(t)z^n=\cfrac{1}{1-\cfrac{1\cdot z}{1-\cfrac{t\cdot z}{1-\cfrac{2\cdot z}{1-\cfrac{2t\cdot z}{1-\ldots}}}}}.
   \end{align}
For  an $n$-permutation 
$\sigma:=\sigma(1)\sigma(2)\cdots\sigma(n)$ of the word $1\ldots n$,
an index  $i$ ($1\leq i\leq n-1$) is a \textit{descent} (resp. \textit{excedance}) of $\sigma$
 if $\sigma(i)>\sigma(i+1)$ (resp. $\sigma(i)>i$).   
 It is well-known~\cite{FS70, Pet15} that   
\begin{align}\label{des-exc}
A_{n}(t)=\sum_{\sigma\in\mathfrak{S}_{n}}t^{\des\, \sigma}=\sum_{\sigma\in\mathfrak{S}_{n}}t^{\exc\,\sigma},
\end{align}
where $\mathfrak{S}_{n}$ is  the set of $n$-permutations and 
$\des\,\sigma$ (resp. $\exc\,\sigma$) denotes the number of descents (resp. excedances) of $\sigma$.
The value  $\sigma(i)$ ($2\leq i\leq n-1$) is a \textit{peak} of
$\sigma$ if $\sigma(i-1)<\sigma(i)>\sigma(i+1)$ and 
the \emph{peak polynomials} are defined by 
\begin{equation}\label{eq:peak-poly}
P_{n}^{\peak}(x):=\sum_{\sigma\in\mathfrak{S}_{n}}x^{\peak'\,\sigma},
\end{equation}
where  $\peak'\,\sigma$ denotes the number of peaks of $\sigma$.
The peak polynomials are related to the Eulerian polynomials 
by Stembridge's identity~\cite[Remark 4.8]{Ste97}, see also \cite{Bra08, Zhuang17}, 
\begin{equation}\label{eq:stembridge}
A_{n}(t)=\left(\frac{1+t}{2}\right)^{n-1}P_{n}^{\peak}\left(\frac{4t}{(1+t)^{2}}\right),
\end{equation}
which can be used  to compute the peak polynomials.
Obviously Eq. \eqref{eq:stembridge}  is equivalent to the so-called $\gamma$-expansion of Eulerian polynomials
\begin{equation}\label{FSGP}
A_n(t)=\sum_{k=0}^{\lfloor (n-1)/2\rfloor} 2^{2k+1-n}\gamma_{n,k}t^k(1+t)^{n-1-2k},
\end{equation}
where $\gamma_{n,k}$ is the number of $n$-permutations with $k$ peaks.
In  the form of \eqref{FSGP} it is not difficult to see that 
Stembridge's formula~\eqref{eq:stembridge}  is actually equivalent to 
a formula of Foata and Sch\"{u}zenberger~\cite[Th\'eor\`eme 5.6]{FS70}, 
see  also  Br\"{a}nd\'{e}n's proof using \emph{modified Foata-Strehl action}~\cite{Bra08}.
In the last two decades, 
many  refinements of Stembridge's identity have been given by 
Br\"{a}nd\'{e}n~\cite{Bra08},  Petersen~\cite{Pet07}, 
Shin and Zeng~\cite{SZ12, SZ16}, Zhuang~\cite{Zhuang17},
Athanasiadis~\cite{Ath18} and others.
In particular, Zhuang~\cite{Zhuang17} has proved 
several identities expressing polynomials counting permutations by various descent statistics in terms of Eulerian polynomials, extending results of Stembridge, Petersen and Br\"and\'en.   

By contracting the  continued fraction~\eqref{gf:eulerian}  
starting from the  first and second lines (see Lemma~\ref{contra-formula}), respectively,
we derive   the two Jacobi-type continued fraction (J-fraction)  formulae (cf. \cite{Fla80})
\begin{subequations}
\begin{align}\label{lag1}
\sum_{n\geq 0} A_{n+1}(t) z^n
=\cfrac{1}{1-(1+t)\cdot z-\cfrac{1\cdot2\cdot t\cdot z^{2}}{1-2(1+t)\cdot z-\cfrac{2\cdot3\cdot t\cdot z^{2}}{1-3(1+t)\cdot z-\cfrac{3\cdot4\cdot t z^{2}}{1-\cdots}}}},
\end{align}
and 
\begin{align}\label{lag2}
\sum_{n\geq 0} A_n(t) z^n=\cfrac{1}{1-(1+0\cdot t)\cdot z-\cfrac{1^2\cdot z^{2}}{1-(2+1\cdot t)\cdot z-\cfrac{2^2\cdot t\cdot  z^{2}}{1-(3+2\cdot t)\cdot z-\cfrac{3^2\cdot t\cdot z^{2}}{1-\cdots}}}}.
\end{align}
\end{subequations}

In view of  Flajolet's combinatorial theory
 for generic J-type continued fraction expansions~\cite{Fla80}, 
 Fran\c con-Viennot's bijection $\psi_{FV}$ (resp. its restricted version $\phi_{FV}$) between permutations and \emph{Laguerre histories} provides  a bijective proof of \eqref{lag1} (resp. \eqref{lag2}), while 
Foata-Zeilberger's bijection $\psi_{FZ}$~\cite{FZ90} 
gives a bijective proof of  \eqref{lag2}. 
More precisely,
Fran\c con-Viennot~\cite{FV79}  set up 
a bijection (and its restricted version) 
from permutations to \emph{Laguarre histories} using  
\emph{linear statistics of permutation}, while 
Foata-Zeilberger constructed  another bijection~\cite{FZ90}  using 
\emph{cyclic statistics of permutations}. In 1997
Clarke-Steingr\'imsson-Zeng~\cite{CSZ97} constructed a  bijection  $\Phi$
on permutations  converting  statistic $\des$ into $\exc$  on permutations 
 and linking  the restricted  Fran\c con-Viennot's bijection $\phi_{FV}$ to  Foata-Zeilberger bijection $\phi_{FZ}$, see the right diagram in Figure~\ref{FVBFZB}. Later, similar to  $\Phi$, Shin and Zeng~\cite{SZ12} constructed  a bijection $\Psi$ on permutations to convert  linear statistics to cycle statistics on permutations corresponding to \eqref{lag1}. We will show  that 
 the composition  $\psi_{FV}\circ  \Psi^{-1}$ coincides with   
 a recent bijection of Yan, Zhou and Lin~\cite{YZL19}, see Figure~\ref{FVBFZB}.

%

\begin{figure}
\begin{center}

\begin{tikzpicture}[scale=0.3]

\node (LH) at (-12,10) {$\LH_n$};

\node (SL) at (-20,0) {$\S_{n+1}$};

\node (SC) at (-5,0) {$\S_{n+1}$};

\node (1) at (-18, 5){$\psi_{{FV}}$};

\node (2) at (-6, 5){$\psi_{{YZL}}$};

\node (3) at (-12, 1) {$\Psi$}; 
\begin{scope}[font=\footnotesize,->]

\draw (SL) -- (LH);

\draw (SC) -- (LH);

\draw (SL) --(SC);
\end{scope}

\end{tikzpicture}
\hspace{2cm}
\begin{tikzpicture}[scale=0.3]

\node (LH) at (-12,10) {$\LH^*_n$};

\node (SL) at (-20,0) {$\S_{n}$};

\node (SC) at (-5,0) {$\S_{n}$};

\node (1) at (-18, 5){$\phi_{{FV}}$};

\node (2) at (-6, 5){$\phi_{{FZ}}$};

\node (3) at (-12, 1) {$\Phi$}; 
\begin{scope}[font=\footnotesize,->]

\draw (SL) --  (LH);

\draw (SC) -- (LH);

\draw (SL) -- (SC);

\end{scope}
\end{tikzpicture}

\end{center}
\caption{\label{FVBFZB}
Two factorizations:  $ \psi_{FV}= \psi_{YZL}\circ \Psi$ and  $\phi_{FV}=\phi_{FZ}\circ \Phi$.  
}
\end{figure}
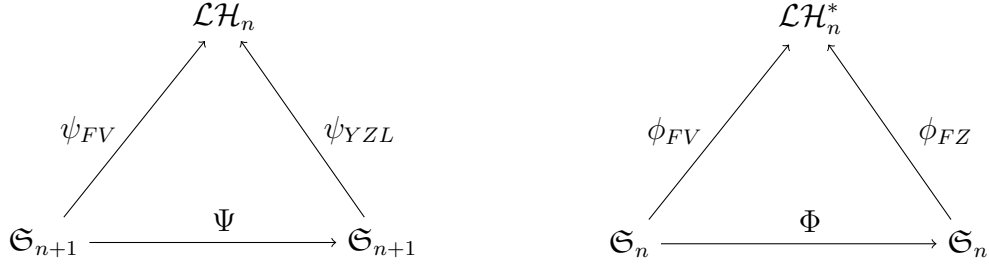

The Narayana polynomials $N_n(t)$ can be defined by the S-fraction expansion
\begin{align}\label{cf:narayana}
\sum_{n\geq 0} N_n(t) z^n= \cfrac{1}{1-\cfrac{z}{1-\cfrac{t\cdot z}{1-\cfrac{z}{1-\cfrac{t\cdot z}{1-\cdots}}}}},
\end{align}
see \cite{FTHZ19}. Note that $N_n(1)$ is the n-th Catalan number $C_n=\frac{1}{n+1}{2n\choose n}$. 
Similar to Eulerian polynomials, by contracting the S-fraction~\eqref{cf:narayana}
we derive immediately  the followoing J-fractions 
\begin{subequations}
\begin{align}\label{2-motzkin}
\sum_{n\geq 0} N_{n+1}(t) z^n=\cfrac{1}{1-(1+t)\cdot z-\cfrac{t\cdot z^{2}}{1-(1+t)\cdot z-\cfrac{
 t\cdot z^{2}}{1-\cdots}}}
\end{align}
and 
\begin{align}\label{2-motzkin-star}
\sum_{n\geq 0} N_n(t) z^n=\cfrac{1}{1-z-\cfrac{ t\cdot z^{2}}{1-(1+t)\cdot z-\cfrac{t\cdot z^{2}}{1-(1+t)\cdot z-\cdots}}}.
\end{align}
\end{subequations}
 
Let $\tau\in \S_3=\{123, 132, 213, 231, 312, 321\}$.   Recall that 
a permutation  $\sigma\in \S_n$ is said to avoid the pattern $\tau$  if 
there is no triple of indices $i<j<k$ such that  $\sigma(i)\sigma(j)\sigma(k)$ is order-isomorphic to $\tau$.  
We shall write $\S_n(\tau)$ for the set of permutations in $\S_n$ avoiding the pattern $\tau$.
It is known~\cite{FTHZ19} that the Narayana polynomials have the combinatorial interpretations
\begin{align}\label{combi-narayana}
N_n(t)=\sum_{\sigma\in \S_n(231)}t^{\des\,\sigma}=\sum_{\sigma\in \S_n(321)}t^{\exc\,\sigma}.
\end{align}
Hence, the Narayana polynomials can be considered as the Eulerian polynomials for 
restricted permutations. Moreover, they  are $\gamma$-positive and 
 have the  $\gamma$-expansion~\cite[Chapter 4]{Pet15}:
\begin{align}\label{nara-gamma}
N_n(t)=\sum_{j=0}^{(n-1)/2}\widetilde{\gamma}_{n,j}t^j (1+t)^{n-1-2j},
\end{align}
where $\widetilde{\gamma}_{n,j}=|\{\sigma\in \S_n(231): \;\des(\sigma)=\peak(\sigma)=j\}|$.

In this paper we shall   give generalizations of 
Stembridge's formula or their $\gamma$-analogues \eqref{FSGP} and \eqref{nara-gamma} using  excedance  statistics 
by further exploiting the continued fraction technique in~\cite{SZ10, SZ12, SZ16}.
Indeed,  from the observation (cf. \cite{SZ12})  that the gamma-positive formula
 of Eulerian polynomials \eqref{FSGP}
is equal to  the Jacobi-Rogers polynomial  corresponding to \eqref{lag1}, 
it becomes clear that Flajolet-Viennot's combinatorial theory of formal 
 continued fractions could shed more lights  on this topic. Our main tool is the combinatorial theory  of continued fractions due to
Flajolet~\cite{Fla80} and  bijections due to Fran\c{c}on-Viennot, Foata-Zeilberger between
permutations and \emph{Laguarre histories}, see \cite{FV79, FZ90, Fla80, CSZ97, SZ10}. As in \cite{SZ12} this approach  uses both linear and cycle statistics on permutations. 
There are several well-known 
$q$-Narayana polynomials in the litterature; see \cite{FTHZ19} and the references therein. As a follow-up to \cite{FTHZ19}, we shall give more results on $q$-Narayana
polynomials using pattern-avoiding permutations.  

The rest of this paper is organized as follows.  In Section~2 after recalling the 
necessary definitions and results from \cite{SZ10,SZ12,SZ16},
 we link the recent bijection $\psi_{YZL}$   
of Yan-Zhou-Lin~\cite{YZL19} to two known bijections;
in Section~3 we present our generalized formulae of \eqref{eq:stembridge}
in three classes:
\begin{itemize} 
\item Eulerian polynomials for permutations and derangements, 
\item Eulerian polynomials for pattern-avoiding permutations,
\item Eulerian polynomials for signed permutations.
\end{itemize}
There are  two types of proof: group actions of Foata-Strehl's type and manipulations of continued fractions. More precisely,
 we prove Theorems~\ref{thm:SZ3q} and \ref{thm:cpeakS_1}
using variations of modified Foata-Strehl  action on permutations or 
 Laguerre histories in Section~4; we then prove 
these two theorems  and 
the remaining theorems by comparing  the continued fraction 
expansions of the generating functions  in Section~5. 
In what follows, we shall abbreviate  "generating functions" by "g.f.".

\section{Background and preliminaries}
		\subsection{Permutation statistics and two bijections}	
	
For  $\sigma=\sigma(1)\sigma(2)\cdots\sigma(n)\in \S_{n}$  with convention 0--0, i.e., $\sigma(0)=\sigma(n+1)=0$,  a value $\sigma(i)$ ($1\leq i\leq n$) is called
\begin{itemize}
\item a \emph{peak} if $\sigma(i-1)<\sigma(i)$ and $\sigma(i)>\sigma(i+1)$;
\item a \emph{valley} if $\sigma(i-1)>\sigma(i)$ and $\sigma(i)<\sigma(i+1)$;
\item a \emph{double ascent} if $\sigma(i-1)<\sigma(i)$ and $\sigma(i)<\sigma(i+1)$;
\item a \emph{double descent} if $\sigma(i-1)>\sigma(i)$ and $\sigma(i)>\sigma(i+1)$.
\end{itemize}

The set of peaks (resp. valleys, double ascents, double descents) of $\sigma$
is denoted by 
$$\Peak\sigma \quad  (\text{resp.} \; \Val\sigma,\; \Da\sigma,\;\Dd\sigma). $$
Let $\peak\,\sigma$ (resp. $\valley\,\sigma$, $\da\,\sigma$, $\dd\,\sigma$) 
be the number of peaks (resp. valleys, double ascents, double descents) of $\sigma$. 
For  $i\in [n]:=\{1, \ldots, n\}$, we introduce the following 
statistics:
\begin{equation}\label{31-2}
\begin{array}{ll}
(\cab)_i\:\sigma&=\#\{j:  1<j<i \text{ and } \sigma(j)<\sigma(i)<\sigma(j-1)\}\\
(\bca)_i\:\sigma&=\#\{j:  i<j<n \text{ and } \sigma(j+1)<\sigma(i)<\sigma(j)\}\\
(\bac)_i\:\sigma&=\#\{j: i<j<n \text{ and } \sigma(j)<\sigma(i)<\sigma(j+1) \}\\
(\acb)_i\:\sigma&=\#\{j: 1<j<i \text{ and } \sigma(j-1)<\sigma(i)<\sigma(j) \}
\end{array}
\end{equation}
and define four statistics (see \eqref{def:vincular}):
$$
(\cab)=\sum_{i=1}^n(\cab)_i, \quad
(\bca)=\sum_{i=1}^n(\bca)_i, \quad
(\bac)=\sum_{i=1}^n(\bac)_i, \quad
(\acb)=\sum_{i=1}^n(\acb)_i.
$$

Now, we consider $\sigma\in \S_{n}$ as a bijection 
$i\mapsto \sigma(i)$ for $i\in [n]$, a  value $x=\sigma(i)$  is called
\begin{itemize}
	\item a \emph{cyclic peak}  if $i=\sigma^{-1}(x)<x$ and $x >\sigma(x)$;
	\item a \emph{cyclic valley}  if $i=\sigma^{-1}(x)>x$ and $x<\sigma(x)$;
	\item a \emph{double excedance} if $i=\sigma^{-1}(x)<x$ and $x<\sigma(x)$; 
	\item a  \emph{double drop} if $i=\sigma^{-1}(x)>x$ and $x>\sigma(x)$;
	\item a \emph{fixed point}  if $x=\sigma(x)$.
\end{itemize}

We say that $i\in[n-1]$ is an \textit{ascent}  of $\sigma$
 if $\sigma(i)<\sigma(i+1)$ 
 and that $i\in[n]$ is a \textit{drop} 
  of $\sigma$
if $\sigma(i)<i$.
Let 
\begin{align}\label{def:cval}
\Cpeak\quad \textrm{(resp. $\Cval$, $\Cda$,  $\Cdd$,  $\Fix$, $\Drop$})
\end{align}
 be  the set  of $\emph{cyclic peaks}$
(resp.  $\emph{cyclic valleys}$, $\emph{ double excedances}$, $\emph{double drops}$, $\emph{fixed points}$, $\emph{drops}$) and 
denote the corresponding cardinality by  
$\cpeak$ (resp. $\cval$, $\cda$, $\cdd$, $\fix$, $\drop$).
Obviously we have 
\begin{align}\label{cpk=cval}
\cpeak \,\sigma=\cvalley\, \sigma\quad \text{for}\quad  \sigma\in \S_n.
 \end{align}
Moreover, we define 
\begin{subequations}
\begin{align}
\wex\:\sigma &=\#\{i:i\leq\sigma(i)\}=\exc\:\sigma+\fix\:\sigma\\
\cros_i\:\sigma &=\#\{j:\:j<i<\sigma(j)<\sigma(i)\;\textrm{or}\;\sigma(i)<\sigma(j)\leq i<j\},\label{def:cros}\\
\nest_i\:\sigma &=\#\{j:\:j<i<\sigma(i)<\sigma(j)\;\textrm{or}
\;\sigma(j)<\sigma(i)\leq i<j\}.\label{def:nest}
\end{align} 
\end{subequations}
Let 
$\cros =\sum_{i=1}^n\cros_i$, $\nest=\sum_{i=1}^n\nest_i$ and  $\icr\:\sigma\footnotemark =\cros\:\sigma^{-1}$.
\footnotetext{Our definition of $\cros$ 
corresponds to $\icr$ in \cite{FTHZ19}.}
 Note (cf  \cite[Remark 2.4]{FTHZ19})
that 
 \begin{align}\label{nest=nest-1}
 \nest\:\sigma^{-1}=\nest\:\sigma\quad \text{for}\quad  \sigma\in \S_n.
 \end{align}
A pair of integers $(i,j)$ is an \emph{inversion} of $\sigma\in \S_n$ if 
$i<j$  and $\sigma(i)>\sigma(j)$, and    $\sigma(i)$ (resp. $\sigma(j)$) 
is called  \emph{inversion top} (resp. \emph{bottom}).
Let $ \inv\:\sigma $ be 
the inverion number of $\sigma$.

 For $\sigma\in \S_n$ with convention 0--$\infty$, i.e., 
  $\sigma(0)=0$ and $\sigma(n+1)=\infty$,
          let 
           $$
           \Lpeak \quad \textrm{(resp. $\Lval$, $\Lda$, $\Ldd$})$$
            be  the set of $\emph{peaks}$
           (resp.  $\emph{valleys}$,         $\emph{double ascents}$ and $\emph{double decents}$) and 
           denote the corresponding cardinality by  $\lpeak$
           (resp. $\lvalley$, $\lda$ and $\ldd$).
          For $i\in [n]$, the value $\sigma(i)$ is called a {\em left-to-right maximum} (resp. {\em right-to-left minimum}) if $\sigma(i)=\textrm{max}\:\{\sigma(1),\sigma(2),\ldots,\sigma(i)\}$ (resp. $\sigma(i)=\textrm{min}\:\{\sigma(i), \ldots, \sigma(n-1),\sigma(n)\}$). 
          Similarly, we define {\em left-to-right minimum} (resp. {\em right-to-left maximum}).
          
          A double ascent $\sigma(i)$ $(i=1, \ldots, n)$ is called a \emph{foremaximum} (resp. \emph{afterminimum}) of $\sigma$ if it is at the same time a left-to-right maximum (resp. right-to-left minimum). Denote the number of foremaxima (resp. afterminima) of $\sigma$ by $\fmax\:\sigma$ (resp. $\amin\, \sigma$).    
  Note that  for the peak number $\peak'$ in \eqref{eq:peak-poly} we have  
 following identities~:
    \begin{align}\label{peak-val}
    \peak'=\val=\peak-1 \quad \text{and}\quad  \lvalley=\lpeak.
    \end{align}

        Now we recall two bijections $\Phi$ and $\Psi$ due to
         Clarke et al.~\cite{CSZ97} and Shin-Zeng~\cite{SZ12}, respectively.        
   \subsection{The bijection $\Phi$} \label{def-Phi}  
        Let  $\sigma=\sigma(1)\ldots \sigma(n)\in \S_n$,
        an {\em inversion top number} (resp. {\em inversion bottom number}) of a letter $x:=\sigma(i)$
        in the word $\sigma$ is the number of occurrences of inversions of form $(i,j)$ (resp $(j,i)$).
        A letter  $\sigma(i)$ is a {\em descent top} (resp. {\em descent bottom}) 
        if $\sigma(i)>\sigma(i+1)$ (resp. $\sigma(i-1)>\sigma(i)$).
        Given a permutation $\sigma$, we first construct two biwords, $f \choose f'$ and $g \choose g'$,  where 
        $f$ (resp. $g$) is the subword of descent bottoms (resp. nondescent bottoms) in $\sigma$ ordered increasingly, 
        and $f'$ (resp. $g'$)
        is the permutation of descent tops (resp. nondescent tops)  in $\sigma$ such that the inversion bottom (resp. top) number of each 
         letter $x:=\sigma(i)$ in $f'$ (resp. $g'$) is  $(\bca)_x\sigma$, and then form the biword $w=\left({f \atop f'}~{g \atop g'}\right)$ by concatenating $f$ and $g$, and $f'$ and $g'$, respectively.
Rearranging the columns of $w$, so that the bottom row is in increasing order, we obtain the permutation $\tau=\Phi(\sigma)$ as the top row of the rearranged bi-word.
      
  \medskip            
The following result can be found in \cite[Theorem 12]{SZ12} and its proof.
         
       \begin{lem}[Shin-Zeng ]\label{97csz}
              For $\sigma\in \S_{n}$, we have
              \begin{subequations}
       	\begin{align}\label{eq:key}
       	&(\bca, \cab, \des,\asc, \lda-\fmax, \ldd, \lvalley, \lpeak,\fmax)\sigma\nonumber\\
	=&
        ( \nest, \icr, \drop, \exc+\fix, \cda, \cdd, \cvalley,\cpeak,\fix)\Phi(\sigma)\\
     =&
       ( \nest, \cros, \exc, \drop+\fix, \cdd, \cda, \cvalley,\cpeak,\fix)(\Phi(\sigma))^{-1},\nonumber
       	\end{align}
       	\begin{align}\label{eq:keycsz}
       	(\Lval, \Lpeak, \Lda, \Ldd)\sigma=
       	( \Cval, \Cpeak, \Cda\cup\Fix, \Cdd)\Phi(\sigma),
       	\end{align}	
       	and 
       	\begin{equation}\label{231nest}
       	(\bca)_i\sigma = \nest_i\Phi(\sigma) \quad \forall i=1,\ldots,n.
       	\end{equation}
       	\end{subequations}
              \end{lem}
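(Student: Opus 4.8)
The statement is Lemma~\ref{97csz}, and it asserts that the bijection $\Phi$ of Clarke--Steingr\'imsson--Zeng translates a nine-tuple (plus the refined equalities \eqref{eq:keycsz} and \eqref{231nest}) of \emph{linear} statistics on $\sigma$ into \emph{cyclic} statistics on $\Phi(\sigma)$ and on $(\Phi(\sigma))^{-1}$. The plan is to prove this by carefully tracking what the biword construction does, column by column, rather than invoking any deeper machinery; since the result is attributed to \cite{CSZ97} and \cite{SZ12}, the task is really to reassemble the proof from the definition of $\Phi$ given just above. First I would recall that in the biword $w=\left({f\atop f'}~{g\atop g'}\right)$, the columns with $f$ on the bottom are exactly the descent-bottom letters of $\sigma$ paired with descent-top letters, so after sorting the bottom row the number of descents of $\sigma$ becomes precisely the number of positions where the top entry exceeds the bottom entry in $\tau=\Phi(\sigma)$, i.e. the number of excedances together with the fixed points coming from the $g$-part; this is the core of the identity $\des\,\sigma = (\exc+\fix)\,\Phi(\sigma)$ and, dually, $\asc\,\sigma = \drop\,\Phi(\sigma)$ when read on the inverse.

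Next I would turn to the inversion-theoretic bookkeeping. The construction is engineered so that the inversion-bottom number of a descent-top letter $x=\sigma(i)$ in $f'$ equals $(\bca)_i\,\sigma$, and the inversion-top number of a nondescent-top letter equals $(\acb)_i\,\sigma$ — wait, more precisely the construction uses $(\bca)_x\sigma$ for descent tops and the analogous count for nondescent tops. Because sorting the bottom row of a biword into increasing order converts inversion-bottom numbers into the nesting/crossing statistics of the resulting permutation (this is the standard dictionary between biwords and the $\cros$/$\nest$ statistics, as in \cite{CSZ97}), I would verify that $(\bca)_i\,\sigma = \nest_i\,\Phi(\sigma)$ holds position-by-position, which is exactly \eqref{231nest}; summing over $i$ gives the $\nest$-coordinate of \eqref{eq:key}, and the same argument on the complementary biword columns gives $\cab\,\sigma = \icr\,\Phi(\sigma)$ and, after inverting, $\cros\,(\Phi(\sigma))^{-1}$. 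The relations $\icr\,\tau = \cros\,\tau^{-1}$ and $\nest\,\tau = \nest\,\tau^{-1}$ (the latter is \eqref{nest=nest-1}) then propagate the middle row of \eqref{eq:key} to the bottom row, and swapping excedances with drops under inversion handles the $\des\leftrightarrow\exc$ column.

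For the remaining coordinates — $\lda-\fmax$, $\ldd$, $\lvalley$, $\lpeak$, $\fmax$ versus $\cda$, $\cdd$, $\cvalley$, $\cpeak$, $\fix$ — I would argue that a linear double ascent of $\sigma$ that is \emph{not} a left-to-right maximum becomes a double excedance of $\Phi(\sigma)$, a foremaximum becomes a fixed point, a linear valley becomes a cyclic valley, a peak becomes a cyclic peak, and a double descent becomes a double drop; this is the content of \eqref{eq:keycsz}, which refines the numerical identities to set identities $(\Lval,\Lpeak,\Lda\cup\Fix\text{-type},\Ldd)\,\sigma = (\Cval,\Cpeak,\Cda\cup\Fix,\Cdd)\,\Phi(\sigma)$. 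The cleanest route is to read each such classification off the biword directly: whether a letter $x$ sits above a larger, equal, or smaller letter in $\tau$, combined with whether $x$'s preimage under $\tau$ is larger or smaller than $x$, determines its cyclic type, and one checks this matches the linear type of $x$ in $\sigma$ by tracing which of the four subwords $f,f',g,g'$ the value $x$ and its $\sigma$-neighbors landed in. The main obstacle, I expect, is precisely this last bookkeeping step: getting the correspondence between the four linear types and the four cyclic types exactly right requires a meticulous case analysis of where a value and its left and right neighbors appear among the descent/nondescent tops and bottoms, and in particular correctly handling the boundary conventions ($\sigma(0)=0$, $\sigma(n+1)=\infty$ on the linear side versus the fixed-point/foremaximum interaction on the cyclic side). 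Everything else is a routine, if lengthy, unwinding of the definition of $\Phi$ together with the inversion-number/nesting dictionary.
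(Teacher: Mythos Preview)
The paper does not actually prove this lemma; the sentence immediately preceding it reads ``The following result can be found in \cite[Theorem 12]{SZ12} and its proof,'' so the lemma is quoted from the literature rather than established here. Your plan to unwind the biword construction of $\Phi$ column by column is indeed the strategy used in \cite{CSZ97} and \cite{SZ12}, so there is nothing to compare at the level of approach.

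That said, your sketch contains a concrete bookkeeping error that would derail the argument if carried out as written. In the biword $w=\bigl({f\atop f'}\ {g\atop g'}\bigr)$, the \emph{top} row is $f\,g$ (descent bottoms and nondescent bottoms) and the \emph{bottom} row is $f'\,g'$ (descent tops and nondescent tops); you have these swapped. Consequently, after sorting the bottom row, a position $j$ that was a descent top of $\sigma$ lies below a descent bottom, so $\tau(j)<j$ and $j$ is a \emph{drop} of $\tau=\Phi(\sigma)$, not an excedance. This gives $\des\,\sigma=\drop\,\Phi(\sigma)$ and $\asc\,\sigma=(\exc+\fix)\,\Phi(\sigma)$, which is what \eqref{eq:key} asserts --- the opposite of what you wrote. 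The same orientation issue would propagate into your identification of the cyclic types in \eqref{eq:keycsz}, so you should redo that case analysis with the rows the right way round. Once the orientation is fixed, the rest of your outline (the inversion-number/nesting dictionary for \eqref{231nest}, the use of $\nest\,\tau=\nest\,\tau^{-1}$ and $\icr\,\tau=\cros\,\tau^{-1}$ to pass to $(\Phi(\sigma))^{-1}$, and the four-case linear-to-cyclic matching) is the correct skeleton.
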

   \subsection{The bijection  $\Psi$}   \label{def-Psi}
        Given a permutation $\sigma \in \S_n$, let 
 \begin{equation} \label{eq:hat}
\hat{\sigma}=\begin{pmatrix}
1& 2&\ldots& n&n+1\\
\sigma(1)+1&\sigma(2)+1&\dots&\sigma(n)+1&1
\end{pmatrix},
\end{equation}
         and $\tau:=\Phi(\hat{\sigma}) \in \S_{n+1}$.
        Since the last element of $\hat{\sigma}$ is $1$, 
 the first element  of $\tau$ should be $n+1$. Define the bijection $\Psi: \S_n\to \S_n$ by 
        \begin{align}\label{def:Psi}
        \Psi(\sigma):=\tau(2)\dots\tau(n+1) \in \S_{n}.
        \end{align}

        	\begin{example}
        		If $\sigma=4~1~ 2~ 7~ 9~6~5~8~3$, then $\hat{\sigma}=5~2~3~8~10~7~6~9~4~1$, and reading from left to right,  we obtain the corresponding 
numbers 
		$(\bca)_i\hat{\sigma}:1,1,1,2,0,1,1,0,0,0$  for $i=5, 2,\ldots, 1$, and 
        		$$
        		{f\choose f'}
        		= \left(
        		{1 \atop 4}~
        		{2 \atop 9}~
        		{4 \atop 5}~
        		{6 \atop 7}~
        		{7 \atop 10}
        		\right),
        		\quad
        		{g\choose g'}
        		= \left(
        		{3 \atop 2}~
        		{5 \atop 3}~
        		{8 \atop 8}~
        		{9 \atop 6}~
        		{10 \atop 1}
        		\right).
        		$$
        		Hence
        		$$
        	w
        		= \left( {f \atop f'}~{g \atop g'} \right)
        		= \left(
        		{1 \atop 4}~
        		{2 \atop 9}~
        		{4 \atop 5}~
        		{6 \atop 7}~
        		{7 \atop 10}~
        		{3 \atop 2}~
        		{5 \atop 3}~
        		{8 \atop 8}~
        		{9 \atop 6}~
        		{10 \atop 1}
        		\right)
        		\to
        		\left(
        		{10 \atop 1}~
        		{3 \atop 2}~
        		{5 \atop 3}~
        		{1 \atop 4}~
        		{4 \atop 5}~
        		{9 \atop 6}~
        		{6 \atop 7}~
        		{8 \atop 8}~
        		{2 \atop 9}~
        		{7 \atop 10}
        		\right).
        		$$
        		Thus $\tau=\Phi(\hat{\sigma})= 10~3~5~1~4~9~6~8~2~7$, and $\Psi(\sigma)=\tau(2)\dots\tau(10)=3~5~1~4~9~6~8~2~7$.
        	\end{example}

  \begin{lem}\label{lem-thm2.9}
  For $i\in [n]$, we have 
  $$
	(\bca)_{i+1}\hat{\sigma}=\begin{cases}
	(\bac)_i\sigma+1& 
	\text{if $i+1\in \Lval\,\hat{\sigma}\cup \Lda\,\hat{\sigma}$},\\
	(\bac)_i\sigma& \text{if $i+1\in \Lpeak\,\hat{\sigma}\cup \Ldd\,\hat{\sigma}$}.
	\end{cases}
	$$ 
  \end{lem}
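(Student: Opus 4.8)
The plan is to unfold both sides of the asserted identity into explicit counts over the positions of $\sigma$, and then to match them by tracking how the values $\sigma(j)$ with $j>\sigma^{-1}(i)$ cross the level $i$. Throughout, the subscripted statistics $(\bca)_{i+1}$ and $(\bac)_i$ are read in their value-indexed form (as in the construction of $\Phi$, where $(\bca)_x$ is attached to a letter $x$), and $\Lval,\Lpeak,\Lda,\Ldd$ are computed with the linear convention $\hat\sigma(0)=0$, $\hat\sigma(n+2)=\infty$.

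First I would record the elementary dictionary relating $\sigma$ and $\hat\sigma$: fixing $i\in[n]$ and setting $v:=i+1$, $q:=\sigma^{-1}(i)$, the entry $v$ occupies position $q$ of $\hat\sigma$, we have $\hat\sigma(k)=\sigma(k)+1$ for $1\le k\le n$ and $\hat\sigma(n+1)=1$, and $q\le n$ because the only entry at position $n+1$ is $1$. Now unwind the definition \eqref{31-2} of $(\bca)_{v}\hat\sigma$ and split the running index $j$ according to whether $j\le n-1$ or $j=n$. Since adding $1$ to every value is order-preserving, the indices $j$ with $q<j\le n-1$ contribute exactly $A:=\#\{j : q<j\le n-1,\; \sigma(j)>i>\sigma(j+1)\}$, while $j=n$ contributes $1$ precisely when $\sigma(n)>i$ (this is where $\hat\sigma(n+1)=1<v$ enters), so $(\bca)_{v}\hat\sigma=A+\mathbf 1[\sigma(n)>i]$. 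The same unwinding of the value-indexed $(\bac)_i\sigma$ yields $(\bac)_i\sigma=B:=\#\{j : q<j\le n-1,\; \sigma(j)<i<\sigma(j+1)\}$.

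The core of the argument is the comparison of $A$ and $B$. For $q<j\le n$ put $s_j:=\operatorname{sign}(\sigma(j)-i)\in\{+,-\}$, which is well defined because injectivity of $\sigma$ forbids $\sigma(j)=i$ for $j>q$. Then $A$ counts the descending crossings ($s_j=+$, $s_{j+1}=-$) and $B$ the ascending crossings ($s_j=-$, $s_{j+1}=+$) of the $\pm$-word $s_{q+1}s_{q+2}\cdots s_n$; since along any such word consecutive crossings have opposite type, a one-line inspection of the four possibilities for the pair $(s_{q+1},s_n)$ gives $A-B=\mathbf 1[\sigma(q+1)>i]-\mathbf 1[\sigma(n)>i]$, the short cases $q\in\{n-1,n\}$ being immediate (with the usual convention $\sigma(n+1)=0$). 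Adding back the $j=n$ term,
\[
(\bca)_{i+1}\hat\sigma-(\bac)_i\sigma=(A-B)+\mathbf 1[\sigma(n)>i]=\mathbf 1[\sigma(q+1)>i]=\mathbf 1[\hat\sigma(q+1)>\hat\sigma(q)],
\]
where the last equality holds because $\hat\sigma(q+1)=\sigma(q+1)+1$ when $q<n$, and $\hat\sigma(q+1)=1<v=\hat\sigma(q)$ when $q=n$. Finally, since $q\le n$ the entries $\hat\sigma(q-1),\hat\sigma(q),\hat\sigma(q+1)$ are three distinct finite values, so $v=\hat\sigma(q)$ is classified among peaks, valleys, double ascents and double descents of $\hat\sigma$ solely by the two strict comparisons with its neighbours; in particular $\hat\sigma(q)<\hat\sigma(q+1)$ holds exactly when $v\in\Lval\hat\sigma\cup\Lda\hat\sigma$, and $\hat\sigma(q)>\hat\sigma(q+1)$ holds exactly when $v\in\Lpeak\hat\sigma\cup\Ldd\hat\sigma$. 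Hence the indicator in the display equals $1$ in the first case and $0$ in the second, which is exactly the assertion.

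The step I expect to demand the most care is purely the boundary bookkeeping: handling the index $j=n$ (where $\hat\sigma(n+1)=1$ is not a shifted value of $\sigma$) together with the degenerate cases $q=n-1$ and $q=n$, and keeping the value-indexed reading of $(\bca)_{i+1}\hat\sigma$ and $(\bac)_i\sigma$ coherent throughout. Once the alternation of crossings is isolated, the rest is a one-line computation.
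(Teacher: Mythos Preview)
Your proof is correct and rests on the same underlying idea as the paper's: to the right of the position of the value $i+1$ in $\hat\sigma$ (equivalently, of $i$ in $\sigma$), the up-crossings and down-crossings of the level $i$ alternate, so their counts differ by at most one, determined by the sign of the first step. The paper phrases this via a decomposition of $\hat\sigma$ into maximal increasing and decreasing runs and counts how many such runs ``cover'' $i+1$; you phrase it via the sign word $s_{q+1}\cdots s_n$ and count sign changes. These are two packagings of the same alternation argument, and your version has the advantage of making the boundary term $j=n$ (coming from $\hat\sigma(n+1)=1$) and the degenerate cases $q\in\{n-1,n\}$ fully explicit, whereas the paper leaves the passage from runs in $\hat\sigma$ to the statistic $(\bac)_i\sigma$ somewhat implicit.
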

  \begin{proof}
  An increasing (resp. decreasing) run of $\sigma$ is 
   a maximum consecutive increasing (resp. decreasing) subsequence 
   $R:=\sigma(i)\sigma(i+1)\ldots \sigma(j)$ of $\sigma$ such that 
   $\sigma(i-1)>\sigma(i)$ and $\sigma(j)>\sigma(j+1)$ (resp.
     $\sigma(i-1)<\sigma(i)$ and $\sigma(j)<\sigma(j+1)$) with $1\leq i\leq j\leq n$.
        	For any $i\in [n]$, as $\hat\sigma(n+1)=1$,
	there is a unique way to 
	write 
\[
	\hat\sigma=\begin{cases}
	w_1(i+1) u_1d_2\ldots u_{k-1}d_k& 
	\text{if $i+1\in \Lval\hat{\sigma}\cup \Lda\hat{\sigma}$},\\
	w_1(i+1) d_1u_2d_2\ldots u_kd_k& \text{if $i+1\in \Lpeak\hat{\sigma}\cup \Ldd\hat{\sigma}$},
	\end{cases}
\]
	where $u_i$ (resp. $d_i$) is an increasing (resp. decreasing) run, 
	and $(i+1)u_1$ (resp. $(i+1)d_1$) is an increasing (resp. decreasing) sequence. We say that a run $R$ covers $i$ if $i$ is bounded by  $\max(R)$
	 and  $\min(R)$.  It is not hard to show that
	 \[
\#\{j\geq 2: \text{$u_j$ covers $i+1$}\}
=\begin{cases}
	\#\{j\geq 2: \text{$d_j$ covers $i+1$}\}+1& 
	\text{if $i+1\in \Lval\hat{\sigma}\cup \Lda\hat{\sigma}$},
	\\
	\#\{j\geq 2: \text{$d_j$ covers $i+1$}\}& 
	\text{if $i+1\in \Lpeak\hat{\sigma}\cup \Ldd\hat{\sigma}$}.
	\end{cases}
\] 
	Since  $(\bac)_i$ (resp.  $(\bca)_i$)
	is the number of increasing (resp. decreasing)
	runs covering $i$ to the right of $i$,  we are done.
 \end{proof}

We use the aforementioned statistics to define  variant boundary conditions. Given a permutation $\sigma\in\S_n$ with convention $\infty-0$, 
 the  number of corresponding peaks, valleys, double ascents, and double descents of permutation $\sigma\in \S_n$ is denoted by $\rpeak\, \sigma$, $\rvalley\, \sigma$, $\rda\,\sigma$, $\rdd\, \sigma$ respectively.
A double descent $\sigma(i)$ is called a \emph{aftermaximum} (resp. foreminimum) of $\sigma$ if it is at the same time a right-to-left maximum (resp. left-to-right minimum). Denote the number of aftermaxima (resp. foreminimum) of $\sigma$ by $\amax\,\sigma$ (resp. $\fmin\,\sigma$).  
For $\sigma=\sigma(1)\sigma(2)\cdots\sigma(n)                                                                                                                                                                                           \in\S_{n}$, we define two  permutations 
$\sigma^c$ and $\sigma^r$ by 
\begin{align} \label{def:rc}
\sigma^c(i)=n+1-\sigma(i)\quad \textrm{and}\quad 
 \sigma^r(i)=\sigma(n+1-i)\quad \text{ for}\quad  i\in [n].
 \end{align}
It is not difficult to verify the following properties
\begin{subequations}
\begin{align}
&(\bca, \cab, \des,\lda-\fmax,\ldd,\lvalley,\fmax)\:\sigma\label{patternres1}\\
= &(\acb, \bac, \asc, \rdd-\amax, \rda, \rvalley, \amax)\:\sigma^r\label{patternres2}\\
=&(\cab, \bca, \des, \lda-\amin, \ldd, \lpeak, \amin)\,\sigma^{r\circ c}\label{patternres3}\\
=&(\bac, \acb, \asc, \rdd-\fmin, \rda, \rvalley, \fmin)\,\sigma^{r\circ {c}\circ r},\label{patternres4}
\end{align}
\end{subequations}
where $\sigma^{r\circ c}=(\sigma^r)^c$ and $\sigma^{r\circ {c}\circ  r}=(\sigma^{r\circ c})^r=(\sigma^r)^{c\circ r}$.

\subsection{The star variation}
For $\sigma=\sigma(1)\cdots\sigma(n) \in \S_n$,  we define
its \emph{star companion}
$\sigma^*$ as a permutation of $\{0, \ldots, n\}$ by
\begin{equation}
\label{eq:star}
\sigma^*=\begin{pmatrix}
0& 1&2&\ldots& n\\
n& \sigma(1)-1&\sigma(2)-1&\dots&\sigma(n)-1
\end{pmatrix}.
\end{equation}          
       We define the following sets of cyclic star statistics for $\sigma$: 
       \begin{subequations}
       \begin{align}
       \Cpeak^*\sigma &=   
       \{i \in [n-1]:  (\sigma^{*})^{-1}(i)<i>\sigma^{*}(i)\},\label{Eq.3.3}\\
       \Cval^{*}\sigma &=   \{i \in [n-1]:  (\sigma^{*})^{-1}(i)>i<\sigma^{*}(i)\},\\
       \Cda^{*}\sigma &=   \{i \in [n-1]: (\sigma^{*})^{-1}(i)<i<\sigma^{*}(i)\},\label{Eq.3.8}\\
       \Cdd^{*}\sigma &=   \{i \in [n-1]: (\sigma^{*})^{-1}(i)>i>\sigma^{*}(i)\},\label{Eq.3.7}\\
       \Fix^{*}\sigma &=  \{i\in [n-1]: i=\sigma^{*}(i)\},\label{Eq.3.4}\\
       \Wex^{*}\sigma &= \{i\in [n-1]: i\le\sigma^{*}(i)\},\\
       \Drop^{*}\sigma &=  \{i\in [n]: i>\sigma^{*}(i)\}.\label{Eq.3.9}
 \end{align}
 \end{subequations}
The corresponding cardinalties are denoted by 
$\cpeak^*$, $\cval^*$, $\cda^*$,  $\cdd^*$,  $\fix^*$, $\wex^*$ and 
$\drop^*$, respectively.
By \eqref{Eq.3.3}, \eqref{Eq.3.7} and \eqref{Eq.3.9},
 we have $\drop^*-1=\cdd^*+\cpeak^*.$
Let $\cyc\, \sigma$ be the number of cycles 
of $\sigma$ and $\cyc^*\sigma:=\cyc\sigma^*$.
For example, for $\sigma=3762154$, we have $\sigma^*=72651043$, which has two cycles $1\to 2\to 6 \to 4\to 1$ and  $7 \to 3 \to 5 \to 0 \to 7$.  Thus 
$\cyc^*\,\sigma=2$.

%
        
       
%
%

  For any subsebt $S\subset \N$, we write $S+1:=\{s+1:s\in S\}$.

\begin{thm}\label{SZsetprop}
          For $\sigma\in\S_n$, we have 
          	\begin{align}\label{set-stat}
          	(\Val,\Peak_n,\Da,\Dd)\sigma=
	(\Cval^*,\Cpeak^*,\Cda^*\cup\Fix^*,\Cdd^*)\Psi(\sigma)
          \end{align}
  with $\Peak_n\sigma=\Peak\sigma\setminus \{n\}$ and 
    \begin{align}\label{nest-cros-i}
  ((\bac)_i,\;(\cab)_i)\sigma=  (\nest_i, \; \cros_i)\Psi(\sigma)
          		\quad\text{for}\quad  i\in [n].
		\end{align}
          \end{thm}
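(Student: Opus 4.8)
The plan is to derive both \eqref{set-stat} and \eqref{nest-cros-i} from the already-established properties of the bijection $\Phi$ (Lemma~\ref{97csz}) together with Lemma~\ref{lem-thm2.9}, by carefully tracking what the ``hat'' construction \eqref{eq:hat} and the truncation \eqref{def:Psi} do to the relevant statistics. Recall $\Psi(\sigma)=\tau(2)\cdots\tau(n+1)$ where $\tau=\Phi(\hat\sigma)\in\S_{n+1}$ and, as noted right after \eqref{eq:hat}, $\tau(1)=n+1$ forcedly. First I would set up the correspondence between positions/values of $\sigma$ and those of $\hat\sigma$: the map $i\mapsto i$ on positions and $x\mapsto x+1$ on values (for $i,x\in[n]$), with the extra position $n+1$ carrying value $1$. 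Under this relabeling, a value $\sigma(i)$ that is a peak/valley/double ascent/double descent of $\sigma$ \emph{with the convention $0$--$0$} becomes the corresponding type of linear peak/valley/etc.\ of $\hat\sigma$ \emph{with convention $0$--$\infty$}, because prepending the virtual $0=\hat\sigma(0)$ and appending $\hat\sigma(n+1)=1$ reproduces exactly the $0$--$0$ boundary behaviour of $\sigma$ at the two ends, while the value $n$ of $\sigma$ (the global max) is precisely the index excluded in $\Peak_n$. This identifies $(\Val,\Peak_n,\Da,\Dd)\sigma$ with $(\Lval,\Lpeak,\Lda,\Ldd)\hat\sigma$ restricted to $[n]$, up to the relabeling.

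Next I would apply \eqref{eq:keycsz} of Lemma~\ref{97csz} to $\hat\sigma\in\S_{n+1}$, giving
\[
(\Lval,\Lpeak,\Lda,\Ldd)\hat\sigma=(\Cval,\Cpeak,\Cda\cup\Fix,\Cdd)\tau,\qquad \tau=\Phi(\hat\sigma).
\]
Then I must push this through the truncation $\tau\mapsto\Psi(\sigma)=\tau(2)\cdots\tau(n+1)$ and relate the cyclic statistics of $\tau$ to the \emph{star} cyclic statistics of $\Psi(\sigma)$. Here the point is that removing the forced entry $\tau(1)=n+1$ and subtracting $1$ from everything is exactly the inverse of the star companion operation \eqref{eq:star}: writing $\rho=\Psi(\sigma)$, one checks $\rho^*$ is (up to the relabeling on $\{0,\dots,n\}$) conjugate/equal to $\tau$ as a permutation of $[n+1]\cong\{0,\dots,n\}$, so that $(\sigma^*)^{-1}(i)\lessgtr i\lessgtr\sigma^*(i)$-type conditions defining $\Cpeak^*,\Cval^*,\Cda^*,\Cdd^*,\Fix^*$ in \eqref{Eq.3.3}--\eqref{Eq.3.4} translate termwise into the ordinary cyclic conditions on $\tau$. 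Matching $\Cda\cup\Fix$ on the $\tau$ side with $\Cda^*\cup\Fix^*$ on the $\Psi(\sigma)$ side then yields \eqref{set-stat}. For the second identity \eqref{nest-cros-i}, I would use \eqref{231nest} of Lemma~\ref{97csz}, which says $(\bca)_j\hat\sigma=\nest_j\tau$ for all $j$, combined with Lemma~\ref{lem-thm2.9}, which converts $(\bca)_{i+1}\hat\sigma$ back to $(\bac)_i\sigma$ (the $+1$ discrepancy on valleys/double ascents being absorbed exactly by the shift between $\nest$ on $\tau$ and $\nest^*$-type counting on $\Psi(\sigma)$, or equivalently by the position-shift $i\mapsto i+1$); the analogous statement for $\cab$ vs $\cros$ follows either from the inverse half of \eqref{eq:key} applied to $\hat\sigma$ or by the same covering-runs argument as in the proof of Lemma~\ref{lem-thm2.9} applied to increasing runs.

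The main obstacle I expect is bookkeeping the boundary conventions and the index shift cleanly: one must verify (i) that the $0$--$0$ convention on $\sigma$ matches the $0$--$\infty$ convention on $\hat\sigma$ precisely on positions $1,\dots,n$ and that the discarded data is exactly the value $n$ (hence $\Peak_n$, not $\Peak$), and (ii) that the ``delete the forced maximum, then shift down by $1$'' operation taking $\tau$ to $\Psi(\sigma)$ is literally the star companion $\rho\mapsto\rho^*$ read on $\{0,\dots,n\}$, so that cyclic peaks/valleys/double excedances/drops/fixed points of $\tau$ become the \emph{starred} versions for $\Psi(\sigma)$ with the index range $[n-1]$ appearing in \eqref{Eq.3.3}--\eqref{Eq.3.9}. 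Once these two identifications are pinned down, \eqref{set-stat} and \eqref{nest-cros-i} drop out of Lemma~\ref{97csz} and Lemma~\ref{lem-thm2.9} with only routine verification; I would present the argument as a short lemma identifying $\Psi$ with ``$\Phi$ on $\hat\sigma$ followed by the inverse star companion'' and then quote \eqref{eq:keycsz}, \eqref{231nest} verbatim.
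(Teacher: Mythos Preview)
Your plan is correct and follows essentially the same route as the paper: identify $(\Val,\Peak_n,\Da,\Dd)\sigma$ with $(\Lval,\Lpeak,\Lda,\Ldd)\hat\sigma$ via the value shift $x\mapsto x+1$, apply \eqref{eq:keycsz} to $\hat\sigma$, and then translate the cyclic statistics of $\tau=\Phi(\hat\sigma)$ into the starred statistics of $\Psi(\sigma)$ through the identity $\tau(j)=(\Psi(\sigma))^*(j-1)+1$; for \eqref{nest-cros-i} combine \eqref{231nest} with Lemma~\ref{lem-thm2.9}. The only place where the paper does more explicit work than your outline suggests is the ``absorption of the $+1$ discrepancy'': this is the statement that $\nest_{i+1}\tau$ equals $\nest_i\Psi(\sigma)+1$ or $\nest_i\Psi(\sigma)$ according to whether $i+1\in\Cval\tau\cup\Cda\tau\cup\Fix\tau$ or $i+1\in\Cpeak\tau\cup\Cdd\tau$, and it requires a short case analysis (using the characterization of $\nest_i$ in terms of $\sigma^*$) rather than being an immediate consequence of the index shift, so be prepared to spell that out.
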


         \begin{proof}   	   For $\sigma\in\S_n$,
         by definition~\eqref{eq:hat}, \eqref{def:Psi} and \eqref{eq:star}, 
we have 
\begin{equation}\label{star+1}
\tau(i+1)=
(\Psi(\sigma))^*(i)+1.
\end{equation} 
and
        \begin{align}
       ((\Val\,\sigma+1)\cup \{1\}, 	\Peak\,\sigma+1, \Da\,\sigma+1, 	\Dd\,\sigma+1)= (\Lval, \Lpeak, \Lda,\Ldd)\hat \sigma.\label{VAL}
    \end{align}
        	For $2\leq i\leq n$,  by \eqref{def:cval} and \eqref{star+1} we have the following equivalences:
	\begin{align*}
i\in \Cval\tau	\Longleftrightarrow 	i-1\in\Cval^*(\Psi(\sigma)) 
\end{align*}
and
        \begin{align*}
         i<\tau(i) \;\text{and} \;i<\tau^{-1}(i)
        	\Longleftrightarrow i< \Psi(\sigma)(i-1)\; \text{and}\; i-1<(\Psi(\sigma))^{-1}(i).
       \end{align*}
        	 Thus, by \eqref{eq:hat} and \eqref{def:Psi},
            \begin{align}\label{cval1}
            (\Cval^*\Psi(\sigma)+1)\cup \{1\}=\Cval\Phi(\hat{\sigma}).
            \end{align}
  In the same vein, we have 
        	\begin{align}
	\Cpeak^*\Psi(\sigma)+1&=\Cpeak\Phi(\hat{\sigma})\setminus \{n+1\}, \nonumber\\
	\Cda^*\Psi(\sigma)\cup \Fix^*\Psi(\sigma)+1&=\Cda\Phi(\hat{\sigma})\cup\Fix\Phi(\hat{\sigma}),\label{cdd1}\\
\Cdd^*\Psi(\sigma)+1&=\Cdd\Phi(\hat{\sigma}).\nonumber
\end{align}
Comparing \eqref{VAL} and \eqref{cval1}-\eqref{cdd1} and using 
\eqref{eq:keycsz} we derive  \eqref{set-stat}.
  
  Next, for \eqref{nest-cros-i}, we only  prove $\nest_i\Psi(\sigma)=(\bac)_i\sigma$ and leave 
$\cros_i\Psi(\sigma)=(\cab)_i\sigma$  to the interested reader. 
 By Lemma~\ref{lem-thm2.9}  we have 
 \begin{align}\label{def:hatbis}
	(\bca)_{i+1}\hat{\sigma}=\begin{cases}
	(\bac)_i\sigma+1& 
	\text{if $i+1\in \Lval\,\hat{\sigma}\cup \Lda\,\hat{\sigma}$},\\
	(\bac)_i\sigma& \text{if $i+1\in \Lpeak\,\hat{\sigma}\cup \Ldd\,\hat{\sigma}$}.
	\end{cases}
\end{align}
  So,  if we  show that 
 \begin{align}\label{nestnest}
 (\nest)_{i+1}\tau=\begin{cases}
 (\nest)_i\Psi(\sigma)+1& 
 \text{if $i+1\in \Cval\,\tau\cup \Cda\,\tau \cup \Fix\,\tau$},\\
 (\nest)_i\Psi(\sigma)& \text{if $i+1\in \Cpeak\,\tau\cup \Cdd\,\tau$},
 \end{cases}
 \end{align}
as  $\tau=\Phi(\hat\sigma)$ and  by \eqref{eq:keycsz}, 
\begin{align}\label{eq:keycszbis}
       	(\Lval, \Lpeak, \Lda, \Ldd)\hat \sigma=
       	( \Cval, \Cpeak, \Cda\cup\Fix, \Cdd)\Phi(\hat \sigma),
       	\end{align}
	the result follows from the identity  	
 $\nest_{i}\tau=(\bca)_{i}\hat{\sigma}$  (see \eqref{231nest}).
 
Now we prove \eqref{nestnest}. By \eqref{def:nest}
the index $\nest_i\:\sigma$ ($i\in [n]$) can 
be characterized in terms of $\sigma^*$ (see \eqref{eq:star}) as follows: 
\begin{align}
\nest_i\:\sigma =\#\{j\in[n]:\:j<i\leq\sigma^*(i)<\sigma^*(j)\;\textrm{or}
\;\sigma^*(j)<\sigma^*(i)< i<j\}\label{nest**}.
\end{align}
We consider  three cases of $i+1$.
        		\begin{itemize}
        		\item [(a)]	 if $i+1<\tau(i+1)$, then $i<(\Psi(\sigma))^*(i)$.
        		By \eqref{star+1}, 
        		we have 
        		\begin{subequations}
        		\begin{align}
        		&\#\{j\in[n]:j+1<i+1<\tau(i+1)<\tau(j+1)\}\label{eq:1}\\
		=&\#\{j\in[n]:j<i\leq(\Psi(\sigma))^*(i)<(\Psi(\sigma))^*(j)\}\nonumber
		\end{align}
		and
        		\begin{align}
		&\#\{j\in[n]: j+1>i+1\geq\tau(i+1)>\tau(j+1)\}\label{eq:2}\\
        		&=\#\{j\in[n]: j>i>(\Psi(\sigma))^*(i)>(\Psi(\sigma))^*(j)\}.\nonumber
        		\end{align}
        		\end{subequations}
  Since $\tau(1)=n+1$ and $1<i+1<\tau(i+1)<n+1$,
        		by \eqref{def:nest}, \eqref{eq:1} and \eqref{eq:2} we obtain
      	\begin{align*}
        	\nest_{i+1}\:\tau 
            =1+&\#\{j\in[n]:j+1<i+1<\tau(i+1)<\tau(j+1)\}\\
             +&\#\{j\in[n]: j+1>i+1\geq\tau(i+1)>\tau(j+1)\}\\
             =1+&\#\{j\in[n]:j<i\leq(\Psi(\sigma))^*(i)<(\Psi(\sigma))^*(j)\}\\
             +&\#\{j\in[n]: j>i>(\Psi(\sigma))^*(i)>(\Psi(\sigma))^*(j)\},
            	\end{align*} 
  	which is equal to $\nest_i\Psi(\sigma)+1$ by \eqref{nest**}.
   	\item [(b)]  if $i+1>\tau(i+1)$, then 
   	$i>(\Psi(\sigma))^*(i)$.
   	Similarly to (a)  we get 
   	$
   	\nest_{i+1}\tau=\nest_i\Psi(\sigma)$.      
   	\item [(c)]  if $i+1=\tau(i+1)$,
	then $i=(\Psi(\sigma))^*(i)$.
    It is easy to see that 
    \begin{equation}\label{final}
    \#\{j\in[n+1]: j>i+1>\tau(j)\}=\#\{j\in[n+1]: j<i+1<\tau(j)\}.
    \end{equation}
As $\tau(1)=n+1$, we have 
	\begin{align*}
	\#\{j\in[n+1]: j>i+1>\tau(j)\}
	=&\#\{j\in[n+1]: j<i+1<\tau(j)\} \\
		=&\#\{j\in[n]: j+1<i+1<\tau(j+1)\}+1\\
	=&\#\{j\in[n]:j<i<(\Psi(\sigma))^*(j)\}+1 \quad (\textrm{by}\; \eqref{star+1})
	\end{align*}
	 Then,
	  we have  $\nest_{i+1}\tau=\nest_i\Psi(\sigma)+1$
	 by using \eqref{def:nest} (resp. $\eqref{nest**}$ ) to compute $\nest_{i+1}\tau$ (resp.  $\nest_i\Psi(\sigma)$).
     \end{itemize}
        \end{proof}
    Since $\asc=\val+\da$, $\des=\peak+\dd-1$, $\wex^*=\cval^*+\cda^*+\fix^*$, $\drop^*-1=\cdd^*+\cpk^*$,
    we get the following result in \cite[Theorem 12]{SZ12}. 
    \begin{cor}[Shin-Zeng]\label{SZbijection} For $\sigma\in \S_n$ we have 
    	\begin{align}
    	&(\bac, \cab, \des, \asc, \da, \dd, \valley)\sigma\nonumber\\
    	=&(\nest, \cros,\drop^*-1,\wex^*,\cda^*+\fix^*,\cdd^*,\cvalley^*)\Psi(\sigma).\label{starCSZ97}
    	\end{align}
    \end{cor}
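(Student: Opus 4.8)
The plan is to obtain Corollary~\ref{SZbijection} as a purely formal consequence of Theorem~\ref{SZsetprop}: one passes from the set-valued identities \eqref{set-stat} and \eqref{nest-cros-i} to their cardinalities and then reshuffles the resulting scalar equalities by means of a few elementary linear relations that hold on each side. First I would collect those relations. On the domain side, with the $0$--$0$ convention every value of $\sigma\in\S_n$ is exactly one of a peak, valley, double ascent or double descent; moreover $n$ is always a peak of $\sigma$, so $|\Peak_n\,\sigma|=\peak\,\sigma-1$, whence (using the identities $\des=\peak+\dd-1$ and $\asc=\val+\da$ recalled just before the statement)
\[
\des\,\sigma=|\Peak_n\,\sigma|+\dd\,\sigma,\qquad \asc\,\sigma=\valley\,\sigma+\da\,\sigma.
\]
On the image side, \eqref{Eq.3.3}, \eqref{Eq.3.7} and \eqref{Eq.3.9} give $\drop^*-1=\cdd^*+\cpk^*$, and by the definition of $\Wex^*$ we have $\wex^*=\cval^*+\cda^*+\fix^*$.

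Next I would take cardinalities in \eqref{set-stat}. Because the union $\Cda^*\cup\Fix^*$ is disjoint, this yields immediately
\[
\valley\,\sigma=\cval^*\,\Psi(\sigma),\quad |\Peak_n\,\sigma|=\cpk^*\,\Psi(\sigma),\quad \da\,\sigma=\cda^*\,\Psi(\sigma)+\fix^*\,\Psi(\sigma),\quad \dd\,\sigma=\cdd^*\,\Psi(\sigma).
\]
Feeding these into the relations of the first paragraph gives $\des\,\sigma=|\Peak_n\,\sigma|+\dd\,\sigma=\cpk^*\,\Psi(\sigma)+\cdd^*\,\Psi(\sigma)=\drop^*\,\Psi(\sigma)-1$ and $\asc\,\sigma=\valley\,\sigma+\da\,\sigma=\cval^*\,\Psi(\sigma)+\cda^*\,\Psi(\sigma)+\fix^*\,\Psi(\sigma)=\wex^*\,\Psi(\sigma)$, which are precisely the third and fourth components of \eqref{starCSZ97}; the components $\da$, $\dd$, $\valley$ are already in hand. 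Finally, summing the index-wise identity \eqref{nest-cros-i} over $i\in[n]$ gives $\bac\,\sigma=\nest\,\Psi(\sigma)$ and $\cab\,\sigma=\cros\,\Psi(\sigma)$, the first two components, and assembling everything produces \eqref{starCSZ97}.

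Since all the substance sits in Theorem~\ref{SZsetprop}, there is no genuine obstacle here; the only points deserving a line of care are the disjointness of $\Cda^*\cup\Fix^*$ (so that cardinalities add) and the two ``$-1$'' shifts, one coming from the fact that $n$ is always a peak of $\sigma$ and the other from $\drop^*-1=\cdd^*+\cpk^*$.
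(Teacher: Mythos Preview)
Your proposal is correct and follows exactly the same route as the paper: the paper's proof is a one-line remark that the corollary follows from Theorem~\ref{SZsetprop} via the identities $\asc=\val+\da$, $\des=\peak+\dd-1$, $\wex^*=\cval^*+\cda^*+\fix^*$, and $\drop^*-1=\cdd^*+\cpk^*$, which is precisely what you have unpacked in detail.
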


        \subsection{Laguerre histories as permutation encodings}
        A 2-\emph{Motzkin path} is a lattice path starting and ending on the horizontal axis but never going below it, with possible steps $(1,1)$, $(1,0)$, and $(1,-1)$, where the level steps $(1,0)$ can be given either of two colors: 
        \emph{blue} and \emph{red}, say. The \emph{length} of the path is defined to be the number of its steps. For our purpose 
        it is convenient to identify 
        a 2-\emph{Motzkin path}  of length $n$  as a word $\mathbf{s}:=s_1\ldots s_n$ on the alphabet $\{\su, \sd, \la, \lb\}$ 
		such that  $|s_1\ldots s_n|_\su= [s_1\ldots s_n|_\sd$ and the \emph{height} of the $i$th step is nonnegative, i.e., 
		\begin{equation}\label{heigt-condition}
		h_{i}(\mathbf{s}):=|s_1\ldots s_i|_\su-[s_1\ldots s_i|_\sd\geq 0 \quad (i=1, \ldots, n),
		\end{equation}
	 where $|s_1\ldots s_i|_\su$ is the number of letters $\su$ in the word $s_1\ldots s_i$. By \eqref{2-motzkin} we see that  the number of 2-Motzkin paths of length $n$ is the Catalan number $C_{n+1}$.

		A \emph{Laguerre history} (resp. \emph{restricted Laguerre history}) of length  $n$ is  a pair $(\mathbf{s, p})$, where $\mathbf{s}$ is 
		a 2-Motzkin path $s_1\ldots s_n$ 
		and $\mathbf{p}=(p_1, \ldots, p_n)$ with $0\leq p_i\leq h_{i-1}(\mathbf{s})$
		 (resp. $0\leq p_i\leq h_{i-1}(\mathbf{s})-1$ if $s_i=\lr$ or $\sd$) with $h_0(\mathbf{s})=0$.
		Let $\LH_n$ (resp. $\LH^*_n$) be the set of Laguerre histories 
		(resp. restricted Laguerre histories)
		of length $n$.
		There are several  well-known  bijections 
		between $\S_{n}$ and $\LH^*_{n}$ and 
		$\LH_{n-1}$, see \cite{DV94, CSZ97}.

	\medskip	
	\begin{figure}[t]
 	\begin{center}
 		\begin{tikzpicture}[scale=0.9]
 		\draw[step=1cm, gray, thick, dotted] (0, 0) grid (8,2);
 		
 		\draw[black] (0, 0)--(1, 1)--(2, 1)--(3, 1)--(4, 0)--(5, 1)--(6, 1)--(7, 1)--(8, 0);
 		\draw[black] (0,0) node {$\bullet$};
 		\draw[black] (1,1) node {$\bullet$};
 		\draw[black] (2,1) node {$\bullet$};
 		\draw[black] (3,1) node {$\bullet$};
 		\draw[black] (4,0) node {$\bullet$};
 		\draw[black] (5,1) node {$\bullet$};
 		\draw[black] (6,1) node {$\bullet$};
 		\draw[black] (7,1) node {$\bullet$};
 		\draw[black] (8,0) node {$\bullet$};

 		\draw [-,thin, blue](1,1)--(2,1);
 		\draw [-,thin, red](2,1)--(3,1);
 		\draw [-,thin, red](5,1)--(6,1);
 		\draw [-,thin, blue](6,1)--(7,1);
 	
 		\tiny{
 			\draw[black] (-0.5,-0.5) node {$p_i$};
 			\draw[black] (0.5,-0.5) node {$0$};
 			\draw[black] (1.5,-0.5) node {$0$};
 			\draw[black] (2.5,-0.5) node {$0$};
 			\draw[black] (3.5,-0.5) node {$1$};
 			\draw[black] (4.5,-0.5) node {$0$};
 			\draw[black] (5.5,-0.5) node {$1$};
 			\draw[black] (6.5,-0.5) node {$1$};
 			\draw[black] (7.5,-0.5) node {$0$};

 			\draw[black] (1.5,1.5) node {$\lb$};
 			\draw[black] (2.5,1.5) node {$\lr$};
 			\draw[black] (5.5,1.5) node {$\lr$};
 			\draw[black] (6.5,1.5) node {$\lb$};
 		
 		}	
 		\end{tikzpicture}
 		\caption{\label{lhexample} A Laguerre history $(\mathbf{s, p})$ of lenth 8. }
 	\end{center}
 \end{figure}
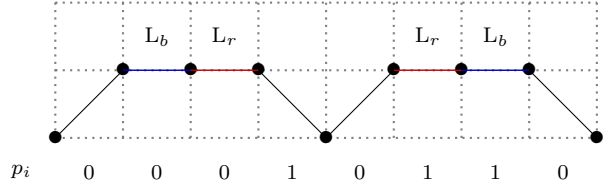

			\subsection{Fran\c con-Viennot bijection}
		We recall a version of Fran\c con and Viennot's bijection 
		$\psi_{FV}: \S_{n+1}\to \LH_{n}$. Given $\sigma\in \S_{n+1}$,  the Laguerre history $\psi_{FV}(\sigma)=(\mathbf{s, p})$ is defined as follows: 
		\begin{align}\label{FV}
		s_i=\begin{cases}
		\su &\textrm{if} \; i\in \Val\sigma
		\\
		\sd &\textrm{if} \; i\in \Peak\sigma
		\\
		\lb &\textrm{if} \; i\in \Da\sigma
		\\
		\la &\textrm{if} \; i\in \Dd\sigma
		\end{cases}
		\end{align}
		and  $p_i=(\bac)_i~\sigma$ for $i=1, \ldots, n$.
		
		For example, if  $\sigma=4~1~ 2~ 7~ 9~6~5~8~3\in \S_9$, then
		$$(\mathbf{s, p})=((\su,\lb,\la,\sd,\su,\la,\lb,\sd),(0,0,0,1,0,1,1,0))\in \LH_{8},$$
		which is depicted in Figure~\ref{lhexample}.
%
%

For  $\sigma\in \S_{n+1}$, 
	we define the following sets
	\begin{subequations}
\begin{align}
	\Scval&= \{i\in [n]: i<\sigma(i)\; \text{and}\; i+1\leq\sigma^{-1}(i+1)\},\label{eq:shift1}\\
	\Scpeak&=\{i\in [n]: i\geq\sigma(i)\; \text{and}\; i+1>\sigma^{-1}(i+1)\},
	\label{eq:shift2}\\
	\Sde&= \{i\in [n]: i<\sigma(i)\; \text{and}\;i+1>\sigma^{-1}(i+1)\},
	\label{eq:shift3}\\
	\Sdn&=\{i\in [n]: i\geq\sigma(i)\; \text{and}\; i+1\leq\sigma^{-1}(i+1)\}.\label{eq:shift4}
\end{align}
\end{subequations}
	
	Yan, Zhou and Lin~\cite{YZL19} constructed a  
bijection $\psi_{{YZL}}:\S_{n+1}\to \L_n$, which can be defined as follows.
For $\sigma\in \S_{n+1}$,      let $\psi_{YZL}(\sigma)=(\mathbf{s, p})$ with 
\begin{align}\label{YZLLH}
s_i=\begin{cases}
\su &\textrm{if} \; i\in \Scval\sigma,
\\
\sd &\textrm{if} \; i\in \Scpeak\sigma,
\\
\lb &\textrm{if} \; i\in \Sde\sigma,
\\
\la &\textrm{if} \; i\in \Sdn\sigma,
\end{cases}
\end{align}
and  $p_i=\nest_i\,\sigma$ for $i=1, \ldots, n$.

    \begin{thm}\label{HMZ}
    We have $\psi_{FV}=\psi_{{YZL}}\circ\Psi$. 
\end{thm}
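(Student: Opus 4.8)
The plan is to prove directly that $\psi_{FV}(\sigma)=\psi_{YZL}(\Psi(\sigma))$ for every $\sigma\in\S_{n+1}$. A Laguerre history in $\LH_{n}$ is completely determined by its word of steps $s_{1}\cdots s_{n}$ over $\{\su,\sd,\lb,\la\}$ together with its labels $p_{1},\dots,p_{n}$, so it suffices to check that the steps and the labels produced by the two procedures coincide. Throughout I apply Theorem~\ref{SZsetprop} with its integer parameter taken to be $n+1$, so that both $\sigma$ and $\tau:=\Psi(\sigma)$ lie in $\S_{n+1}$, the starred statistics of $\tau$ are subsets of $[n]$, and $\Peak_{n+1}\sigma=\Peak\sigma\setminus\{n+1\}$.

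The labels are immediate: by the definition of $\psi_{FV}$ (the line after \eqref{FV}) the $i$-th label of $\psi_{FV}(\sigma)$ is $(\bac)_{i}\sigma$, while by the definition of $\psi_{YZL}$ (the line after \eqref{YZLLH}) the $i$-th label of $\psi_{YZL}(\tau)$ is $\nest_{i}\tau$, and these agree for all $i\in[n]$ by equation \eqref{nest-cros-i} of Theorem~\ref{SZsetprop}. For the steps, \eqref{FV} dictates $s_{i}$ of $\psi_{FV}(\sigma)$ according to which of $\Val\sigma,\Peak\sigma,\Da\sigma,\Dd\sigma$ contains $i$, and \eqref{YZLLH} dictates $s_{i}$ of $\psi_{YZL}(\tau)$ according to which of $\Scval\tau,\Scpeak\tau,\Sde\tau,\Sdn\tau$ contains $i$. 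Equation \eqref{set-stat} of Theorem~\ref{SZsetprop} gives $(\Val,\Peak_{n+1},\Da,\Dd)\sigma=(\Cval^{*},\Cpeak^{*},\Cda^{*}\cup\Fix^{*},\Cdd^{*})\tau$, and since $n+1\in\Peak\sigma$ always while only indices $i\in[n]$ matter, this says exactly that for $i\in[n]$ we have $i\in\Val\sigma\Leftrightarrow i\in\Cval^{*}\tau$, $i\in\Peak\sigma\Leftrightarrow i\in\Cpeak^{*}\tau$, $i\in\Da\sigma\Leftrightarrow i\in(\Cda^{*}\cup\Fix^{*})\tau$, and $i\in\Dd\sigma\Leftrightarrow i\in\Cdd^{*}\tau$. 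Hence the theorem reduces to the four set identities $\Cval^{*}\tau=\Scval\,\tau$, $\Cpeak^{*}\tau=\Scpeak\,\tau$, $(\Cda^{*}\cup\Fix^{*})\tau=\Sde\,\tau$ and $\Cdd^{*}\tau=\Sdn\,\tau$, valid for all $\tau\in\S_{n+1}$.

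To prove these identities I would unfold the star companion $\tau^{*}$ from \eqref{eq:star}: for $i\in[n]$ one has $\tau^{*}(i)=\tau(i)-1$ and $(\tau^{*})^{-1}(i)=\tau^{-1}(i+1)$, the latter because the value $i$ of $\tau^{*}$ sits at the position that holds the value $i+1$ in $\tau$. Substituting these into the definitions \eqref{Eq.3.3}--\eqref{Eq.3.4} of $\Cval^{*},\Cpeak^{*},\Cda^{*},\Cdd^{*},\Fix^{*}$ rewrites each membership test at $i$ as a pair of inequalities among $i$, $\tau(i)$ and $\tau^{-1}(i+1)$, which one then compares termwise with the defining conditions \eqref{eq:shift1}--\eqref{eq:shift4} of $\Scval,\Scpeak,\Sde,\Sdn$. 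The two families of conditions differ only by a strict-versus-weak inequality in the $\tau(i)$-slot (the starred ones separate the cases $\tau(i)\le i$, $\tau(i)=i+1$, $\tau(i)\ge i+2$, whereas the shifted ones only distinguish $\tau(i)\le i$ from $\tau(i)\ge i+1$); but whenever $\tau(i)=i+1$ one has $\tau^{-1}(i+1)=i$, and a short case inspection then shows that such an index $i$ lies in $\Fix^{*}\tau\subseteq(\Cda^{*}\cup\Fix^{*})\tau$ and in $\Sde\tau$, and in none of the remaining three sets on either side, while in every case with $\tau(i)\ne i+1$ the extra inequality is never borderline and the starred and shifted conditions match verbatim. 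This establishes the four identities and thus the theorem; in particular it yields the left-hand factorization $\psi_{FV}=\psi_{YZL}\circ\Psi$ of Figure~\ref{FVBFZB}.

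The only real obstacle is the bookkeeping in the last step: keeping straight the index shift relative to the statement of Theorem~\ref{SZsetprop} (which is phrased for $\S_{n}$, while here we work with permutations in $\S_{n+1}$ and Laguerre histories of length $n$), the $0$--$0$ boundary convention built into $\Val,\Peak,\Da,\Dd$, and the degenerate case $\tau(i)=i+1$ where a fixed point of $\tau^{*}$ appears and the ``$\Cda^{*}$ versus $\Fix^{*}$'' distinction is resolved. Conceptually, once Theorem~\ref{SZsetprop} is available the present statement is essentially its transcription into the language of Laguerre histories, so I do not expect any genuinely new difficulty.
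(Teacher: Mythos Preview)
Your proposal is correct and follows essentially the same route as the paper: both reduce the claim, via Theorem~\ref{SZsetprop}, to the four set identities $(\Scval,\Scpeak,\Sde,\Sdn)\tau=(\Cval^{*},\Cpeak^{*},\Cda^{*}\cup\Fix^{*},\Cdd^{*})\tau$ (the paper records this as \eqref{star-shift}), and both verify these by unfolding $\tau^{*}$ and handling the borderline case $\tau(i)=i+1$. Your write-up is a bit more explicit about the substitutions $\tau^{*}(i)=\tau(i)-1$ and $(\tau^{*})^{-1}(i)=\tau^{-1}(i+1)$, but the argument is the same.
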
 

\begin{proof}

Let	$\psi=\psi_{{FV}}\circ\Psi^{-1}$, which 
 is a bijection from $\S_{n+1}$ to $\L_n$.
By Theorem~\ref{SZsetprop}, for $\sigma\in \S_{n+1}$, we can define $\psi(\sigma)=(\mathbf{s, p})$ as follows: 
	 for $i=1, \ldots, n$,
	\begin{align}\label{HMZ-STAR}
	s_i=\begin{cases}
	\su &\textrm{if} \; i\in \Cval^*\sigma,
	\\
	\sd &\textrm{if} \; i\in \Cpeak^*\sigma,
	\\
	\lb &\textrm{if} \; i\in \Cda^*\sigma\cup\Fix^*\sigma, 
	\\
	\lr &\textrm{if} \; i\in \Cdd^*\sigma,
	\end{cases}
	\end{align}
	with $p_i=\nest_i\sigma$. Comparing \eqref{YZLLH} and \eqref{HMZ-STAR} 
it suffices   to show that
for $\sigma\in \S_{n+1}$,
	\begin{align}\label{star-shift}
	(\Scval, \;\Scpeak,\;  \Sdn, \;\Sde)\sigma
	=(\Cval^*,\; \Cpeak^*,\;\Cdd^*,\;\Cda^*\cup\Fix^*)\sigma.
	\end{align}
We just prove $\Scval\sigma=\Cval^*\sigma$ and omit the similar proof of other cases. 
As $\Cval^*(\sigma)=\{i:i+1<\sigma(i), \; i< \sigma^{-1}(i+1)\}$, comparing with \eqref{eq:shift1}
we need only to  show that $\Scval\sigma \subset \Cval^*\sigma$.
If  $i\in \Scval(\sigma)$, then 
$i<\sigma(i)$ and $ i+1\leq \sigma^{-1}(i+1)$. Suppose 
 $i+1=\sigma(i)$,  then $\sigma^{-1}(i+1)=i$, which contradicts the second inequality.
So  $i+1<\sigma(i)$, and $i\in \Cval^*\sigma$. 
 We are done.  
\end{proof}

\begin{figure}[t]
	{\small
		\begin{tabular}{c|c|c|c|c|c|c|c}
			{$\sigma\in\DD_{4,k}$}&{$\sigma$} 
			&{$\sigma^r$} &{$\Psi(\sigma)\in\SDE_{4,k}$ } &
			$(\cab)\sigma$ &$(\bac)\sigma$
			&$\inv\Psi(\sigma)$ & $\exc\Psi(\sigma)$\\
			\hline
			$k=0$ &1324 & 4231 & 1423 & 
			0 & 1 & 2 & 1  
			\\
			\hline
			 &1423 & 3241 & 1432 &   1 & 0 & 3 & 1 \\
			 &2314 & 4132 & 4123 &   0 & 2 & 3 & 1 \\
		    &2413 & 3142 & 4132 &   1 & 1 & 4 & 1\\
		    $k=1$&3412 & 2143 & 3214 & 1 & 0 & 3 & 1 \\
		    &2134 & 4312 & 3124 & 0 & 1 & 2 & 1 \\
	        &3124 & 4213 & 4213  & 1 & 1 & 4 & 1\\
            &4123 & 3214 & 4231   & 2 & 0 & 5 & 1\\	
		\end{tabular}
	}
	\caption{Illustration of $\Psi$ on $\DD_{4,k}$ with their statistics.}
	\label{fig:table2}
\end{figure}

  Given a 2-Motzkin path $\mathbf{s}$ of length $n$ 
we define $\theta(\mathbf{s})$ to be the 
 2-Motzkin path obtained by switching all the letters 
$\lb$ with  $\lr$  in $\mathbf{s}$.
By abuse of notation, for   a Laguerre history $(\mathbf{s, p})\in \LH_n$  we define 
\begin{align}\label{def:theta}
\theta(\mathbf{s, p})=(\theta(\mathbf{s}), \mathbf{p}).
\end{align}

   \begin{cor}\label{starpqzhuang}
The two sextuple  statistics 
   $$
   (\nest, \cros, \exc, \cdd^*,\cda^*+\fix^*, \cpeak^*)\quad \textrm{and}\quad
   (\bac, \cab, \des,  \da, \dd, \peak-1)
   $$ 
    		are equidistributed on $\S_n$.
    	\end{cor}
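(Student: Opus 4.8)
The plan is to obtain the corollary from Shin--Zeng's bijection $\Psi$ (Corollary~\ref{SZbijection}) together with the colour‑switching involution $\theta$ on Laguerre histories. First I would apply Corollary~\ref{SZbijection}, use $\peak-1=\valley$, and drop the entries $\asc$ and $\wex^*$, to record that $(\bac,\cab,\des,\da,\dd,\peak-1)$ on $\S_n$ is equidistributed — indeed transported by $\Psi$ — to $(\nest,\cros,\drop^*-1,\cda^*+\fix^*,\cdd^*,\cvalley^*)$ on $\S_n$. So the task reduces to matching this latter sextuple with $(\nest,\cros,\exc,\cdd^*,\cda^*+\fix^*,\cpeak^*)$ on $\S_n$.

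Next I would record the elementary \emph{star} identities valid on $\S_n$: $\cpeak^*=\cvalley^*$ (from the set identity \eqref{set-stat}, since $|\Val\,\tau|=|\Peak_n\,\tau|$ for every $\tau$); $\drop^*-1=\cdd^*+\cpeak^*$ (noted in Section~2); and, since $\sigma^*(i)=\sigma(i)-1$ gives $i\le\sigma^*(i)\Longleftrightarrow i<\sigma(i)$ for $i\in[n-1]$, also $\exc=\cval^*+\cda^*+\fix^*=\cpeak^*+\cda^*+\fix^*$. Feeding these into both sextuples, the claim reduces to producing a bijection $\iota\colon\S_n\to\S_n$ that fixes $\nest$, $\cros$, $\cpeak^*$ and interchanges $\cda^*+\fix^*$ with $\cdd^*$. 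Indeed, for such an $\iota$ one gets $\exc(\iota\sigma)=\cpeak^*(\iota\sigma)+(\cda^*+\fix^*)(\iota\sigma)=\cpeak^*(\sigma)+\cdd^*(\sigma)=(\drop^*-1)(\sigma)$, hence $(\nest,\cros,\exc,\cdd^*,\cda^*+\fix^*,\cpeak^*)(\iota\sigma)=(\nest,\cros,\drop^*-1,\cda^*+\fix^*,\cdd^*,\cvalley^*)(\sigma)$; composing with $\Psi$ then exhibits the desired bijection $\iota\circ\Psi$ (which, by Theorem~\ref{HMZ}, is nothing but $\psi_{YZL}^{-1}\circ\theta\circ\psi_{FV}$).

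To build $\iota$ I would use $\psi_{YZL}\colon\S_n\to\L_{n-1}$ and $\theta$. By \eqref{YZLLH} and the set identity \eqref{star-shift} from the proof of Theorem~\ref{HMZ}, $\psi_{YZL}(\sigma)=(\mathbf{s},\mathbf{p})$ carries its up‑, down‑, blue‑level‑ and red‑level‑steps exactly at the positions of $\Cval^*\sigma$, $\Cpeak^*\sigma$, $\Cda^*\sigma\cup\Fix^*\sigma$ and $\Cdd^*\sigma$, with $p_i=\nest_i\sigma$; and, as is built into this Foata--Zeilberger type encoding (cf. \cite{SZ10,SZ12}), $\cros\,\sigma=\sum_i\bigl(h_{i-1}(\mathbf{s})-p_i\bigr)$ depends on $\psi_{YZL}(\sigma)$ only through $\mathbf{p}$ and the heights of $\mathbf{s}$, hence only through $\mathbf{p}$ and the uncoloured Motzkin shape. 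Setting $\iota:=\psi_{YZL}^{-1}\circ\theta\circ\psi_{YZL}$, the involution $\theta$ leaves the Motzkin shape and every label $p_i$ unchanged while swapping blue and red level steps; therefore $\iota$ fixes all $\nest_i$ (so $\nest$), fixes $\Cval^*$ and $\Cpeak^*$ as sets (so $\cpeak^*$), fixes $\cros$, and interchanges $\Cda^*\cup\Fix^*$ with $\Cdd^*$ (so $\cda^*+\fix^*$ with $\cdd^*$) — precisely the $\iota$ required above.

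The hard part will be the identification $\cros\,\sigma=\sum_i\bigl(h_{i-1}(\mathbf{s})-p_i\bigr)$ used in the last step — equivalently, transported through $\Psi$ via \eqref{nest-cros-i}, the statement that in the Françon--Viennot encoding $\psi_{FV}$ the \emph{co-label} $h_{i-1}-p_i$ at step $i$ records $(\cab)_i$. This is the one point where one must unwind the explicit Françon--Viennot / Foata--Zeilberger constructions rather than merely quote set‑valued equidistributions; granting it, the $\theta$‑invariance of $\cros$ (and hence the whole corollary) is immediate. One should also check the routine indexing conventions ($\psi_{YZL}$ viewed as $\S_n\to\L_{n-1}$, and the boundary conventions making $\exc$, $\cros$ and $\sum_i p_i$ into sums over $[n-1]$ with $\nest_n=0$), but these present no difficulty.
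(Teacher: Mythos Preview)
Your proposal is correct and is essentially the paper's own argument, just organized in two stages (first $\Psi$, then the colour-swap $\iota=\psi_{YZL}^{-1}\circ\theta\circ\psi_{YZL}$) rather than one: as you yourself note, the composite $\iota\circ\Psi=\psi_{YZL}^{-1}\circ\theta\circ\psi_{FV}$ is exactly the map $\sigma\mapsto\tau$ the paper uses. The ``hard part'' you isolate --- that in the $\psi_{FV}$/$\psi_{YZL}$ encodings the co-label $h_{i-1}-p_i$ records $(\cab)_i$ respectively $\cros_i$ --- is precisely the inductive identity $(\bac)_i+(\cab)_i=h_{i-1}=\nest_i+\cros_i$ the paper invokes to conclude.
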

   
 \begin{proof}
 For $\sigma\in\S_n$, let $\tau=\psi^{-1}\circ\theta\circ\psi_{{FV}}(\sigma)$.
 It follows from \eqref{def:theta}, \eqref{FV} and \eqref{HMZ-STAR} that
 \begin{align*}
 &(\Val,\; \Peak_n,\; \Dd,\; \Da,\; (\bac)_i)\sigma\\
  =&(\Cval^*,\;\Cpeak^*,\;  \Cda^*\cup\Fix^*,\; \Cdd^*,\; \nest_i)\tau, \quad \forall i\in [n].
  \end{align*}
Let $\psi_{{FV}}(\sigma)=(\mathbf{s, p})$ and 
$\psi(\tau)=(\mathbf{s', p'})$. Then $h_i(\mathbf{s, p})=h_i(\mathbf{s', p'})$ for all $i\in[n]$. 
It is not difficult to prove by induction that	
\begin{subequations}
\begin{align}
(\bac)_i\sigma+(\cab)_i\sigma&=h_{i-1}(\mathbf{s, p}),\\
\nest_i\sigma+\cros_i\sigma&=h_{i-1}(\mathbf{s', p'}).
\end{align}  
\end{subequations}
Thus we have $(\cab)_i\sigma=\cros_i\tau$. 
As $\exc=\wex^*=\cval^*+\cda^*+\fix^*$,
$\des=\val+\dd$,  $\cpeak^*=\cval^*$, and $\val=\peak-1$,
 the proof is completed.
\end{proof}

For $k\in [n]$ we define the subsets of $\S_n$:
\begin{subequations}
\begin{align}
\DD_{n,k}:=&\{\sigma\in\S_n:\des\,\sigma=k,\;\dd\,\sigma=0\},\\
\DE^*_{n,k}:=&\{\sigma\in\S_n:\exc\,\sigma=k,\;  \cda^*\,\sigma+\fix^*\,\sigma=0\},\label{DE-star}\\
\SDE_{n,k}:=&\{\sigma\in\S_n:\exc\,\sigma=k,\; 
 \sde(\sigma)=0\}.\label{SDE}
\end{align}
\end{subequations}

\begin{thm}\label{Thm:YZL-HMZ} For $0\leq k\leq (n-1)/2$ 
	we have 
	\begin{subequations}
	\begin{align}
\gamma_{n,k}(q)&:=\sum_{\sigma\in \DD_{n,k}}q^{2(\cab)\sigma+(\bac)\sigma}\\
&=\sum_{\sigma\in \DE^*_{n,k}}q^{\inv\,\sigma-\exc\,\sigma}\label{eq:YZL-HMZ2}\\
&=\sum_{\sigma\in \SDE_{n,k}}q^{\inv\,\sigma-\exc\,\sigma}\label{eq:YZL-HMZ1}.
	\end{align}	
	\end{subequations}		
\end{thm}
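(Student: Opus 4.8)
The plan is to prove the two nontrivial equalities separately. The coincidence $\DE^*_{n,k}=\SDE_{n,k}$ is essentially free: inside the proof of Theorem~\ref{HMZ} it is shown (formula~\eqref{star-shift}) that $\Sde\,\sigma=\Cda^*\sigma\cup\Fix^*\sigma$ for every permutation, and since $\Cda^*\sigma$ and $\Fix^*\sigma$ are disjoint this gives $\sde\,\sigma=\cda^*\sigma+\fix^*\sigma$. Hence the conditions $\exc\,\sigma=k,\ \cda^*\sigma+\fix^*\sigma=0$ and $\exc\,\sigma=k,\ \sde\,\sigma=0$ cut out the same subset of $\S_n$, so $\DE^*_{n,k}=\SDE_{n,k}$, and as the weight $q^{\inv\,\sigma-\exc\,\sigma}$ is literally the same on both, the sums \eqref{eq:YZL-HMZ2} and \eqref{eq:YZL-HMZ1} agree with no further work.

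For $\gamma_{n,k}(q)=\eqref{eq:YZL-HMZ2}$ I would use the map $\sigma\mapsto\Psi(\sigma^r)$ and first check that it is a bijection $\DD_{n,k}\to\DE^*_{n,k}$. Reading off the $0$--$0$ boundary convention, $\sigma(i)$ is a double descent of $\sigma$ precisely when $\sigma^r(n+1-i)$ is a double ascent of $\sigma^r$, so $\dd\,\sigma=\da\,\sigma^r$; together with $\des\,\sigma=\asc\,\sigma^r$ (the reversal part of~\eqref{patternres1}--\eqref{patternres2}) this gives $\sigma\in\DD_{n,k}\iff\asc\,\sigma^r=k$ and $\da\,\sigma^r=0$. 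Corollary~\ref{SZbijection} applied to $\sigma^r$ then yields $\exc\,\Psi(\sigma^r)=\wex^*\Psi(\sigma^r)=\asc\,\sigma^r$ and $(\cda^*+\fix^*)\Psi(\sigma^r)=\da\,\sigma^r$, so $\Psi(\sigma^r)\in\DE^*_{n,k}$; since $\Psi$ and reversal are bijections of $\S_n$, this is a bijection onto $\DE^*_{n,k}$, with inverse $\tau\mapsto(\Psi^{-1}\tau)^r$.

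For the weights, $\cab\,\sigma=\bac\,\sigma^r$ and $\bac\,\sigma=\cab\,\sigma^r$ (again the reversal part of~\eqref{patternres1}--\eqref{patternres2}), and~\eqref{nest-cros-i} applied to $\sigma^r$ gives $\bac\,\sigma^r=\nest\,\Psi(\sigma^r)$ and $\cab\,\sigma^r=\cros\,\Psi(\sigma^r)$; hence
\[
2\,\cab\,\sigma+\bac\,\sigma=2\,\bac\,\sigma^r+\cab\,\sigma^r=2\,\nest\,\Psi(\sigma^r)+\cros\,\Psi(\sigma^r).
\]
It then remains to recognize the right-hand side as $\inv\,\Psi(\sigma^r)-\exc\,\Psi(\sigma^r)$, which is the content of the identity
\[
\inv\,\tau=\exc\,\tau+2\,\nest\,\tau+\cros\,\tau\qquad(\tau\in\S_n)
\]
expressing the inversion number through the excedance, crossing and nesting statistics. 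Summing $q$ to this power over $\DD_{n,k}$ and transporting along the bijection of the previous paragraph gives $\gamma_{n,k}(q)=\sum_{\tau\in\DE^*_{n,k}}q^{\inv\,\tau-\exc\,\tau}$, which together with the first paragraph completes the proof.

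The only ingredient above that is not a routine composition of maps already set up in the paper is the last displayed identity $\inv\,\tau=\exc\,\tau+2\,\nest\,\tau+\cros\,\tau$, and this is the step I expect to be the real obstacle: I would either cite it or deduce it from the $J$-fraction of the $q$-Eulerian polynomial via the Foata--Zeilberger encoding of permutations by Laguerre histories. That same continued-fraction machinery also yields an alternative proof of the whole theorem, presumably the route of Section~5: one computes the $J$-fraction of $\sum_{n,k}\gamma_{n,k}(q)\,t^k z^n$ from each of the three descriptions, noting that restricting $\psi_{FV}$, resp.\ $\psi_{YZL}$, to $2$-Motzkin paths with no red level steps is exactly the condition $\dd=0$, resp.\ $\cda^*+\fix^*=0$, and that both weights $2\,\cab+\bac$ and $\inv-\exc$ become the same height-dependent weight on the path, so the three fractions coincide; this avoids isolating the inversion identity.
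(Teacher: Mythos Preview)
Your proposal is correct and follows essentially the same route as the paper's proof: apply the bijection $\sigma\mapsto\Psi(\sigma^r)$, use Corollary~\ref{SZbijection} to match $(\des,\dd,\cab,\bac)$ on $\sigma$ with $(\wex^*,\cda^*+\fix^*,\nest,\cros)$ on $\Psi(\sigma^r)$, and then invoke $\inv=\exc+2\nest+\cros$ to rewrite the weight; the equality $\DE^*_{n,k}=\SDE_{n,k}$ is deduced from~\eqref{star-shift} exactly as you indicate. The identity you flag as the potential obstacle is precisely formula~\eqref{INV} in the paper, cited there from \cite[Eq.~(40)]{SZ10}, so no additional work is needed; your observation that $\exc=\wex^*$ (immediate from $\sigma^*(i)=\sigma(i)-1$) is a small clarification the paper leaves implicit.
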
 
\begin{proof}
		For $\sigma\in \S_n$, recall that $\sigma^r:=\sigma(n)\cdots\sigma(2)\sigma(1)$ (see \eqref{def:rc}).
	By \eqref{starCSZ97},
	\begin{align*}
	\bigl(2(\cab)+\bac\bigr)\sigma=\bigl(2(\bac)+\cab\bigr)\sigma^r=(2\nest+\cros)\Psi(\sigma^r).
	\end{align*}
	Invoking the following formula for inversion numbers (cf. \cite[Eq. (40)]{SZ10})
\begin{align}\label{INV}
\inv=\exc+2\nest+\cros,
\end{align} 
	we derive
	\begin{align}\label{inv=nest+cros}
	\bigl(2(\cab)+(\bac)\bigr)\sigma= (\inv-\exc)\Psi(\sigma^r).
	\end{align}
	Besides,
	by \eqref{starCSZ97} and \eqref{star-shift} we have  
	\begin{align*}
	(\des, \dd)\sigma&=(\asc, \da)\sigma^r\\
	&=(\wex^*, \cda^*+\fix^*)\Psi(\sigma^r)\\
	&=(\cval^*+\cda^*+\fix^*, \cda^*+\fix^*)\Psi(\sigma^r).
	\end{align*}   
	Hence, if $\dd(\sigma)=(\cda^*+\fix^*)\Psi(\sigma)=0$, 
	from \eqref{starCSZ97} we see  that 
	$\sigma\in \DD_{n,k}$ if and only if $\Psi(\sigma^r)\in \DE^*_{n,k}$.	
	By \eqref{inv=nest+cros} this implies \eqref{eq:YZL-HMZ2}. 
		 Finally, we derive \eqref{eq:YZL-HMZ1} from  \eqref{star-shift}. 
\end{proof}
	\begin{remark} Yang-Zhou-Lin~\cite{YZL19} proved that 
	$$
	\gamma_{n,k}(q)=\sum_{\sigma\in \DD_{n,k}}q^{(\cab)\sigma+2(\bac)\sigma}
	$$
	which first appeared  as the $\gamma$-coefficents of 
	the polynomial 
	$\sum_{\sigma\in \S_n} t^{\exc\sigma} q^{\inv\sigma-\exc\sigma}$ in \cite{SZ16}.
	\end{remark}

 \subsection{Restricted Fran\c con-Viennot  bijection} 
 	We recall a restricted version of Fran\c con and Viennot's bijection 
		$\phi_{{FV}}: \S_{n}\to \LH_{n}^*$. Given $\sigma\in \S_{n}$,  the Laguerre history $(\mathbf{s, p})$ is defined as follows: 
		\begin{align}\label{res-FV}
		s_i=\begin{cases}
		\su &\textrm{if} \; i\in \Lval\sigma
		\\
		\sd &\textrm{if} \; i\in \Lpeak\sigma
		\\
		\lb &\textrm{if} \; i\in \Lda\sigma
		\\
		\la &\textrm{if} \; i\in \Ldd\sigma
		\end{cases}
		\end{align}
		and  $p_i=(\bca)_i~\sigma$ for $i=1, \ldots, n$.

       \subsection{Foata-Zeilberger  bijection}
       This  bijection  $\phi_{{FZ}}$ encodes permutations 
       using  cyclic statistics.
       Given $\sigma\in \S_{n}$, 
       $\phi_{{FZ}}: \S_{n}\to \L_n^*$ is 
       for $i=1, \ldots, n$,
       \begin{align}\label{resFZ}
       s_i=\begin{cases}
       \su &\textrm{if} \; i\in \Cval\sigma
       \\
       \sd &\textrm{if} \; i\in \Cpeak\sigma
       \\
       \lb &\textrm{if} \; i\in \Cda\sigma\cup\Fix\sigma
       \\
       \la &\textrm{if} \; i\in \Cdd\sigma
       \end{cases}
       \end{align}
       with $p_i=\nest_i\sigma$.
             By \eqref{eq:keycsz} and \eqref{231nest}, we can build a comutative diagram,
       see the right diagram of Figure~\ref{FVBFZB}.

\subsection{Pattern avoidances and 2-Motzkin paths}

We shall consider the so-called \emph{vincular patterns} \cite{BS}. The number of occurrences of vincular patterns $\cab$, $\bca$, $\bac$ and $\acb$ in $\pi\in \S_n$ are defined (cf. \eqref{31-2}) by
\begin{equation}\label{def:vincular}
\begin{array}{ll}
(\cab)\:\pi&=\#\{(i,j):  i+1<j\le n \text{ and } \pi(i+1)<\pi(j)<\pi(i)\},\\
(\bca)\:\pi&=\#\{(i,j):  j<i<n \text{ and } \pi(i+1)<\pi(j)<\pi(i)\},\\
(\bac)\:\pi&=\#\{(i,j): j<i<n \text{ and } \pi(i)<\pi(j)<\pi(i+1) \},\\
(\acb)\:\pi&=\#\{(i,j): i+1<j\le n \text{ and } \pi(i)<\pi(j)<\pi(i+1) \}.
\end{array}
\end{equation}
Similarly, we use $\S_n(\cab)$ to denote the set of permutations of length $n$ that avoid the vincular pattern $\cab$, etc. 
In order to apply  Laguerre history 
to  count pattern-avoiding permutations,  we will need the following results in
\cite[Lemma 2.8 and 2.9]{FTHZ19}.
\begin{lem}\label{general-231-321}\cite[Lemma 2.8]{FTHZ19}
For any $n\geq1$, we have
\begin{align}
\S_n(\bac)&=\S_n(213), \quad
\S_n(\cab)=\S_n(312),\label{eq1} \\
\S_n(\acb)&=\S_n(132), \quad
\S_n(\bca)=\S_n(231). \label{eq4}
\end{align}
\end{lem}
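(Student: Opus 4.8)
The plan is to prove each of the four set-equalities in Lemma~\ref{general-231-321} by a direct argument showing that avoiding the vincular pattern is equivalent to avoiding the corresponding classical pattern. The key observation is that for each of these four patterns, the middle letter of the classical pattern can always be ``slid'' into an adjacent position without destroying the order-isomorphism type, so that the existence of a classical occurrence forces the existence of a vincular one, while the reverse inclusion is trivial. I would treat $\S_n(\bac)=\S_n(213)$ in full detail and then indicate that the other three cases follow by the symmetries encoded in \eqref{def:rc}, i.e.\ by applying the reversal $\sigma\mapsto\sigma^r$ and complement $\sigma\mapsto\sigma^c$ operations, which interchange $\bac\leftrightarrow\acb$, $\cab\leftrightarrow\bca$ and $213\leftrightarrow 132\leftrightarrow 231\leftrightarrow 312$ appropriately.

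For the case $\S_n(\bac)=\S_n(213)$: the inclusion $\S_n(\bac)\subseteq\S_n(213)^c$—more precisely, that a $\bac$-occurrence is a $213$-occurrence—is immediate, since by \eqref{def:vincular} a pair $(i,j)$ with $j<i<n$ and $\pi(i)<\pi(j)<\pi(i+1)$ gives indices $j<i<i+1$ with $\pi(j)$ playing the ``2'', $\pi(i)$ the ``1'', $\pi(i+1)$ the ``3'', hence $\S_n(213)\subseteq \S_n(\bac)$. For the converse I would argue the contrapositive: suppose $\pi$ contains a classical $213$-pattern, so there exist $a<b<c$ with $\pi(b)<\pi(a)<\pi(c)$. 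Among all such triples, choose one with $c-b$ minimal. If $c=b+1$ we already have a $\bac$-occurrence (with the roles $j=a$, $i=b$), so assume $c>b+1$ and consider the intermediate value $\pi(b+1)$. Comparing $\pi(b+1)$ with $\pi(a)$ and $\pi(c)$: if $\pi(b+1)<\pi(a)$ then $(a,b+1,c)$ is a $213$-pattern with smaller gap, contradicting minimality; if $\pi(b+1)>\pi(a)$ then $(b,b+1)$ together with the earlier... wait—more carefully, if $\pi(b+1)>\pi(b)$ we need to produce a closer pattern: if $\pi(a)<\pi(b+1)$ then $(a,b,b+1)$ is order-isomorphic to $213$ when $\pi(b)<\pi(a)<\pi(b+1)$, giving a $\bac$-occurrence directly; the remaining subcase $\pi(b+1)<\pi(b)$ yields $(a,b+1,c)$ as a $213$-pattern contradicting minimality of $c-b$. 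Thus in every case we reach either a $\bac$-occurrence or a contradiction, so $\pi\notin\S_n(\bac)$ implies... I mean $\pi\notin\S_n(213)$ implies $\pi\notin\S_n(\bac)$, giving $\S_n(\bac)\subseteq\S_n(213)$.

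The main obstacle is simply the bookkeeping of subcases in the contractibility argument: one must be careful that the minimality condition is genuinely decreased in the cases where no immediate vincular occurrence appears, and that the three-element windows used ($(a,b,b+1)$ versus $(a,b+1,c)$) are the correct ones for the pattern at hand. Once $\S_n(\bac)=\S_n(213)$ is established, the identity $\S_n(\acb)=\S_n(132)$ follows by applying reversal: $\pi$ avoids $132$ iff $\pi^r$ avoids $231$... one has to check which classical pattern reversal sends $213$ to, and likewise which vincular pattern the reversal sends $\bac$ to, using the explicit index translation in \eqref{def:vincular}. Since $\pi\mapsto\pi^r$ sends an occurrence of the consecutive-pair pattern $\bac$ (positions $j<i<i+1$) to a consecutive-pair pattern in $\pi^r$ read right-to-left, and similarly on the classical side, each of the remaining three equalities \eqref{eq1}--\eqref{eq4} is obtained from the first by composing with one or two of the operations $r$ and $c$; I would state the relevant correspondences $\{213,\bac\}\leftrightarrow\{132,\acb\}$ under reversal and $\{213,\bac\}\leftrightarrow\{231,\cdots\}$ under the appropriate symmetry, and leave the verification to the reader as it is entirely mechanical. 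Alternatively, since this is quoted verbatim from \cite[Lemma 2.8]{FTHZ19}, the cleanest route is to cite that reference and merely recall the statement, which is what the excerpt already does.
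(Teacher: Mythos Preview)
The paper does not prove this lemma at all; it simply imports the statement from \cite[Lemma~2.8]{FTHZ19}, so there is no ``paper's own proof'' to compare against. Your direct argument is the standard one and is correct in outline: every vincular occurrence is a classical occurrence (giving $\S_n(213)\subseteq\S_n(\bac)$), and conversely, starting from a classical $213$-occurrence at positions $a<b<c$ with $c-b$ minimal, one examines $\pi(b+1)$ and either finds an adjacent occurrence or contradicts minimality. The symmetry reduction via $\sigma\mapsto\sigma^r$ and $\sigma\mapsto\sigma^c$ to derive the other three equalities is also standard and valid.

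One small point: your case split in the contrapositive direction is written in a slightly tangled way (you first split on $\pi(b+1)\lessgtr\pi(a)$, then restart and mention a ``remaining subcase $\pi(b+1)<\pi(b)$'' that is not actually a separate case). The clean dichotomy is just
\[
\pi(b+1)<\pi(a)\ \Rightarrow\ (a,b+1,c)\text{ is a }213\text{ with smaller gap},\qquad
\pi(b+1)>\pi(a)\ \Rightarrow\ (a,b,b+1)\text{ is a }\bac,
\]
and no further subcases are needed. With that tidied up, your proof is complete and in fact supplies more than the paper does.
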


\begin{lem}\cite[Lemma~2.9]{FTHZ19}\label{321:nest}
\begin{itemize}
\item[(i)] A permutation $\pi\in \S_n$ belongs to $\S_n(321)$ if and only if $\nest\pi=0$. 
\item[(ii)] The mapping $\varPhi$ has the property that  $\varPhi(\S_{n}(231))=\S_n(321)$. 
\end{itemize}
\end{lem}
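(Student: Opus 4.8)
The plan is to prove part~(i), which is the combinatorial heart, and then to deduce part~(ii) formally from it together with \eqref{231nest} and Lemma~\ref{general-231-321}.

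For part~(i), I would first unwind \eqref{def:nest}: $\nest\,\pi=0$ says that for every $i$ there is no $j$ with $j<i<\pi(i)<\pi(j)$ and no $j$ with $\pi(j)<\pi(i)\le i<j$. In the first clause both $i$ and $j$ are excedance positions (as $j<i<\pi(i)$ forces $\pi(j)>\pi(i)>i>j$), so the clause forbids a descent among the values read at excedance positions; in the second clause both $i$ and $j$ are non-excedance positions ($\pi(i)\le i$ and $\pi(j)<\pi(i)\le i<j$), so it forbids a descent among the values read at the non-excedance positions. Hence $\nest\,\pi=0$ is equivalent to saying that the subword of $\pi$ at its excedance positions is increasing and the subword at its non-excedance positions is increasing. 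It then remains to equate this ``two increasing runs'' condition with avoidance of $321$. One implication is immediate by looking at the middle letter of a pattern $a<b<c$ with $\pi(a)>\pi(b)>\pi(c)$: if $b<\pi(b)$ then $a$ is also an excedance and $a<b<\pi(b)<\pi(a)$ gives $\nest_b\,\pi\ge 1$, while if $b\ge\pi(b)$ then $c$ is also a non-excedance and $\pi(c)<\pi(b)\le b<c$ gives $\nest_b\,\pi\ge 1$. For the converse, suppose $\nest_i\,\pi\ge 1$ witnessed by $j<i<\pi(i)<\pi(j)$; since position $i$ is occupied by the value $\pi(i)>i$, at least one value of $\{1,\dots,i\}$ sits at a position $k>i$, and then $j<i<k$ with $\pi(j)>\pi(i)>i\ge\pi(k)$ is a $321$-pattern. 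The case in which the second clause of $\nest_i$ holds is symmetric (now one finds a position $k<i$ carrying a value larger than $\pi(i)$). This settles part~(i). Alternatively, part~(i) is precisely the classical description of $\S_n(321)$ as the permutations whose excedance values and whose non-excedance values are both increasing, combined with the first observation of this paragraph.

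For part~(ii), write $\varPhi=\Phi$ for the bijection of Section~\ref{def-Phi}. Equation~\eqref{231nest} gives $(\bca)_i\sigma=\nest_i\Phi(\sigma)$ for all $i$, hence $(\bca)\sigma=\nest\,\Phi(\sigma)$ after summation; in particular $\sigma$ avoids the vincular pattern $\bca$ if and only if $\nest\,\Phi(\sigma)=0$. By Lemma~\ref{general-231-321} the left-hand condition is $\sigma\in\S_n(231)$, and by part~(i) the right-hand one is $\Phi(\sigma)\in\S_n(321)$. Since $\Phi$ is a bijection of $\S_n$, this yields $\varPhi(\S_n(231))=\S_n(321)$.

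The only genuine obstacle is the equivalence inside part~(i) between $\nest\,\pi=0$ and $321$-avoidance; within it the delicate step is the converse direction, where a decreasing pair of excedance (or non-excedance) values must be promoted to an actual length-three decreasing subsequence, which is exactly where the pigeonhole count above—rather than a one-line inequality chase—is required. Everything in part~(ii) is then pure bookkeeping.
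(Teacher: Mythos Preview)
Your proof is correct. Note, however, that the paper does not itself prove this lemma: it is quoted verbatim from \cite[Lemma~2.9]{FTHZ19}, so there is no in-paper argument against which to compare your treatment of part~(i). Your characterization of $\nest\pi=0$ as ``excedance subword increasing and non-excedance subword increasing'' followed by the pigeonhole promotion to a length-three decreasing subsequence is the standard route, and your handling of both directions is sound.

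For part~(ii) there is something to compare: the paper effectively re-proves it later as Theorem~\ref{thm2.8}, but by a slightly different path. Instead of invoking \eqref{231nest} to transport $(\bca)_i\sigma=0$ directly to $\nest_i\Phi(\sigma)=0$ as you do, the paper unpacks the biword construction of $\Phi$ on a $231$-avoider, observes that the words $f'$ and $g'$ come out in increasing order, and then checks by hand that $\nest_i\tau=0$, before appealing to part~(i) and a cardinality count. Your argument is the more economical of the two, since \eqref{231nest} already packages precisely the information the paper re-derives; the paper's hands-on version, on the other hand, makes the action of $\Phi$ on $231$-avoiders explicit, which it then leverages to read off the additional equidistribution~\eqref{Paeq:key}.
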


We use  $\2MP_n$ to denote  the set of 2-Motzkin paths
of length $n$ and $\2MP^{*}_n$ to denote its subset that is composed 
of 2-Motzkin paths without $\la$-step at level zero, i.e., 
if $h_{i-1}=0$, then $s_i\neq \la$.
Let $\widetilde{\phi}_{FV}$, $\widetilde{\phi}_{FZ}$, $\widetilde{\psi}_{FV}$  and 
$\widetilde{\psi}_{YZL}$ be the restriction of $\phi_{FV}$, $\phi_{FZ}$, $\psi_{FV}$  and 
$\psi_{YZL}$ on the sets  $\S_n(231)$, $\S_n(321)$, $\S_{n+1}(213)$ and $\S_{n+1}(321)$, respectively.

\begin{thm}\label{thm2.7} We have 
\begin{enumerate}
\item The mapping $\widetilde{\phi}_{FV}$ is a bijection from $\S_n(231)$ to $\2MP_n^*$.
\item The mapping $\widetilde{\phi}_{FZ}$ is a bijection from $\S_n(321)$ to $\2MP_n^*$.
\item The mapping $\widetilde{\psi}_{FV}$ is a bijection from $\S_n(213)$ to $\2MP_n$.
\item The mapping $\widetilde{\psi}_{YZL}$ is a bijection from $\S_n(321)$ to $\2MP_n$.
\end{enumerate}
\end{thm}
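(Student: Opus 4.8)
The plan is to handle all four parts uniformly, exploiting the fact that each of the four bijections encodes $\sigma$ as a Laguerre history $(\mathbf{s},\mathbf{p})$ whose label vector $\mathbf{p}=(p_1,\dots,p_n)$ records the \emph{partial} values of a single nonnegative permutation statistic, and that the pattern class in question is precisely the locus where that statistic vanishes. Explicitly, $p_i=(\bca)_i\sigma$ for $\phi_{FV}$, $p_i=\nest_i\sigma$ for $\phi_{FZ}$, $p_i=(\bac)_i\sigma$ for $\psi_{FV}$, and $p_i=\nest_i\sigma$ for $\psi_{YZL}$; in each case $\widetilde{\phi}_{FV}$, etc., is to be read as the appropriate restriction followed by the forgetful map $(\mathbf{s},\mathbf{p})\mapsto\mathbf{s}$ onto $\2MP_n$.

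First I would recall that $\phi_{FV}\colon\S_n\to\LH_n^*$ and $\phi_{FZ}\colon\S_n\to\LH_n^*$ are bijections, and that $\psi_{FV}\colon\S_{n+1}\to\LH_n$ and $\psi_{YZL}\colon\S_{n+1}\to\LH_n$ are bijections, as recalled in the preceding subsections. Since $(\bca)_i\sigma,\ (\bac)_i\sigma,\ \nest_i\sigma\ge 0$ and $\sum_i(\bca)_i\sigma=(\bca)\sigma$, $\sum_i(\bac)_i\sigma=(\bac)\sigma$, $\sum_i\nest_i\sigma=\nest\sigma$, the image of $\sigma$ under the relevant bijection has label vector $\mathbf{p}=\mathbf{0}$ if and only if the corresponding statistic is zero at $\sigma$. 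By Lemma~\ref{general-231-321} this statistic vanishes exactly on $\S_n(231)=\S_n(\bca)$ for $\phi_{FV}$ and on $\S_{n+1}(213)=\S_{n+1}(\bac)$ for $\psi_{FV}$; by Lemma~\ref{321:nest}(i) it vanishes exactly on $\S_n(321)$ for $\phi_{FZ}$ and on $\S_{n+1}(321)$ for $\psi_{YZL}$. Hence each restriction is a bijection from the prescribed pattern class onto the set of Laguerre histories in the ambient family whose label vector is identically zero, and on that set the forgetful map is trivially a bijection onto the set of underlying $2$-Motzkin paths $\mathbf{s}$ that admit the all-zero labeling.

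It then remains to identify, in each ambient family, which $2$-Motzkin paths admit the all-zero labeling. In $\LH_n$ the only constraint is $0\le p_i\le h_{i-1}(\mathbf{s})$, which the zero vector always meets, so every path of $\2MP_n$ qualifies; this yields parts $(3)$ and $(4)$. In $\LH_n^*$ there is the additional requirement $p_i\le h_{i-1}(\mathbf{s})-1$ whenever $s_i\in\{\lr,\sd\}$; when $s_i=\sd$ this holds automatically because the path never passes below the axis, forcing $h_{i-1}(\mathbf{s})\ge 1$, while when $s_i=\lr$ it says $h_{i-1}(\mathbf{s})\ge 1$, i.e. no red level step sits at height $0$. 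The latter is exactly the defining condition of $\2MP_n^*$, so the all-zero-labeled histories of $\LH_n^*$ correspond bijectively to $\2MP_n^*$; this yields parts $(1)$ and $(2)$.

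The argument carries no real obstacle once the bookkeeping is set up: the three substantive inputs (bijectivity of $\phi_{FV},\phi_{FZ},\psi_{FV},\psi_{YZL}$ with their stated label rules, Lemma~\ref{general-231-321}, and Lemma~\ref{321:nest}(i)) are all available. The one point meriting care is the distinction between the two ambient families, namely the observation that the extra inequality for down steps $s_i=\sd$ in $\LH_n^*$ is vacuous for the all-zero labeling, so that the passage from $\2MP_n$ to $\2MP_n^*$ is controlled solely by the red level steps at height $0$.
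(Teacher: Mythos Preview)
Your argument is correct and complete. The route differs from the paper's: for part~(1), the paper shows only that $\widetilde{\phi}_{FV}$ is injective (two distinct $\sigma_1,\sigma_2\in\S_n(231)$ both have $\mathbf{p}=\mathbf{0}$, so their paths must differ since $\phi_{FV}$ is a bijection) and then concludes bijectivity by a cardinality count, invoking the continued fraction \eqref{2-motzkin-star} at $t=1$ to get $|\2MP_n^*|=C_n=|\S_n(231)|$; the other parts are left to the reader. You instead identify the image directly as the set of zero-labeled histories and then characterize exactly which paths $\mathbf{s}$ admit $(\mathbf{s},\mathbf{0})$ in the ambient family, which dispenses with any enumeration input. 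Your approach is more self-contained and makes transparent \emph{why} the restricted family $\2MP_n^*$ (rather than $\2MP_n$) appears in parts~(1) and~(2): it is forced by the constraint $p_i\le h_{i-1}-1$ for $s_i=\lr$ in $\LH_n^*$, a point the paper uses implicitly but does not spell out. The paper's approach, on the other hand, is slightly shorter once one is willing to cite the Catalan enumeration of both sides.
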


\begin{proof}
We just prove (1) and leave the others to the reader.
If  $\sigma_1$, $\sigma_2\in\S_n(231)$,  let 
$\phi_{FV}(\sigma_i)=(\mathbf{s_i}, \mathbf{p_i})$ for $i=1, 2$. 
By definition we have 
$(\bca)\sigma_1=(\bca)\sigma_2=0$, which implies that 
$\mathbf{p_1}=\mathbf{p_2}=(0, 0, \cdots, 0)$;
as $\phi_{FV}$ is a bijection, we derive that 
$\mathbf{s}_1\neq \mathbf{s}_2$. Hence,
the mapping $\widetilde{\phi}_{FV}$ is an injection from $\S_{n}(231)$ to 
$\2MP^{*}_n$.  Noticing that the g.f. $\sum_{n\geq 0} |\2MP^{*}_n|z^n$ has the continued fraction expansion \eqref{2-motzkin-star} with $t=1$,  we derive that $|\S_{n}(231)|=|\2MP^{*}_n|=C_n$.   Thus, the mapping $\widetilde{\phi}_{FV}$ is a bijection.
%

%

\end{proof}

\begin{thm}\label{thm2.8}
	Let $\widetilde{\Phi}$ be the restriction of $\Phi$ on $\S_n(231)$. Then $\widetilde{\Phi}$ is a bijection from $\S_n(231)$ to $\S_n(321)$.
	Moreover, for $\sigma\in\S_n(231)$, 
	we have 
	\begin{align}\label{Paeq:key}
	&(\cab, \des,\asc, \lda-\fmax, \ldd, \lvalley, \lpeak,\fmax)\,\sigma\nonumber\\
	=&
	( \icr, \drop, \exc+\fix, \cda, \cdd, \cvalley,\cpeak,\fix)\,\widetilde{\Phi}(\sigma)\\
	=&
	(  \cros, \exc, \drop+\fix, \cdd, \cda, \cvalley,\cpeak,\fix)(\widetilde{\Phi}(\sigma))^{-1}.\nonumber
	\end{align}
\end{thm}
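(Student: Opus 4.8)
The statement is essentially the restriction of Lemma~\ref{97csz} (the Shin--Zeng refinement of the Clarke--Steingr\'imsson--Zeng bijection $\Phi$) to $231$-avoiding permutations, so the plan is to inherit the identities of Lemma~\ref{97csz} verbatim and only verify that the restriction of $\Phi$ behaves well with respect to the two pattern classes $\S_n(231)$ and $\S_n(321)$. First I would recall that, by Lemma~\ref{321:nest}(ii), $\Phi$ maps $\S_n(231)$ bijectively onto $\S_n(321)$; hence $\widetilde\Phi$ is well defined and is a bijection between these two sets, with inverse the restriction of $\Phi^{-1}$. This gives the first sentence of the theorem with essentially no work.

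For the displayed identity \eqref{Paeq:key}, I would simply specialize the three-line identity \eqref{eq:key} of Lemma~\ref{97csz} to $\sigma\in\S_n(231)$. The only term that needs attention is the removal of the $\bca$-coordinate on the left and the $\nest$-coordinate in the middle line: by Lemma~\ref{321:nest}(i) a permutation $\pi$ lies in $\S_n(321)$ iff $\nest\,\pi=0$, and by Lemma~\ref{general-231-321} we have $\S_n(231)=\S_n(\bca)$, so $(\bca)\,\sigma=0$ for $\sigma\in\S_n(231)$ and $\nest\,\Phi(\sigma)=0$ since $\Phi(\sigma)\in\S_n(321)$; equivalently this follows coordinatewise from \eqref{231nest}, which gives $(\bca)_i\sigma=\nest_i\Phi(\sigma)$ for all $i$, both sides vanishing identically. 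Consequently the first coordinate of all three tuples in \eqref{eq:key} is identically $0$ and may be deleted, and the remaining eight coordinates reproduce exactly \eqref{Paeq:key}. For the third line I would also use that $\nest\,\pi=\nest\,\pi^{-1}$ (identity \eqref{nest=nest-1}) so that $\Phi(\sigma)\in\S_n(321)$ forces $(\Phi(\sigma))^{-1}\in\S_n(321)$ as well, keeping the bookkeeping consistent, though strictly speaking this is not needed since we are only dropping a coordinate that is already known to vanish.

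The mildly delicate point — and the one I would write out carefully — is the claim that $\widetilde\Phi$ is a genuine bijection $\S_n(231)\to\S_n(321)$ rather than merely an injection; for this I would either cite Lemma~\ref{321:nest}(ii) directly (where it is asserted that $\Phi(\S_n(231))=\S_n(321)$) or, if a self-contained argument is wanted, note that $\Phi$ is injective on all of $\S_n$, that it sends $\S_n(231)$ into $\S_n(321)$ by the $\nest=0$ characterization, and that $|\S_n(231)|=|\S_n(321)|=C_n$, so the restriction is onto by counting. Everything else is a direct quotation of Lemma~\ref{97csz} with one coordinate suppressed, so I expect no real obstacle beyond making sure the coordinate-dropping is justified; the substantive content has already been done in establishing \eqref{eq:key} and Lemma~\ref{321:nest}.
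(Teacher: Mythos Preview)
Your proposal is correct and matches the paper's own proof almost exactly: the paper also derives \eqref{Paeq:key} by specializing Lemma~\ref{97csz} and dropping the $\bca=\nest=0$ coordinate, and it establishes the bijection via your ``self-contained'' alternative (show $\nest_i\widetilde\Phi(\sigma)=0$ so $\widetilde\Phi(\sigma)\in\S_n(321)$ by Lemma~\ref{321:nest}(i), then use injectivity of $\Phi$ together with $|\S_n(231)|=|\S_n(321)|=C_n$). The only cosmetic difference is that you lead with Lemma~\ref{321:nest}(ii) directly, whereas the paper re-derives that conclusion from scratch; either route is fine.
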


\begin{proof}
	For $\sigma\in\S_n(231)$, 
	we have $(\bca)_i=0$ for $i\in [n]$.
	So the inversion bottom (resp. top) number of each letter in $f'$ (resp. $g'$)
	equals 0. Let $\tau=\widetilde{\Phi}(\sigma)$.  
	By definition of $\Phi$ (cf. Section~\ref{def-Phi})  
	the  letters  in $f'$ (resp. $g'$) are in increasing  order. 
	It is not hard to verify that 
		$\nest_i(\tau)=0$ for each $i\in [n]$. 
	By Lemma \ref{321:nest}, we derive that  $\tau\in \S_n(321)$. 
	For $\sigma_1, \sigma_2\in\S_n(231)$,
	since $\Phi$ is a bijection,
	we have $\widetilde{\Phi}(\sigma_1)\neq\widetilde{\Phi}(\sigma_2)$. And $|\S_n(231)|=|\S_n(321)|=C_n$,
	so $\widetilde{\Phi}$ is a bijection from $\S_n(231)$ to $\S_n(321)$.
	Finally,  the equidistribution  \eqref{Paeq:key}  follows from Lemma \ref{97csz}.
\end{proof}

\begin{thm}\label{thm2.9} Let $\widetilde{\Psi}$ be the restriction of $\Psi$ on
	$\S_n(213)$. Then 
	$\widetilde{\Psi}$ is a bijection from $\S_n(213)$ to $\S_n(321)$.
\end{thm}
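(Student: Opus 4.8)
The plan is to mirror the proof of Theorem~\ref{thm2.8}, replacing the restricted Fran\c con--Viennot bijection $\phi_{FV}$ by the (unrestricted) Fran\c con--Viennot bijection $\psi_{FV}$ and the bijection $\Phi$ by $\Psi$. Recall from the definition of $\psi_{FV}$ that for $\sigma\in\S_n$ one has $p_i=(\bac)_i\,\sigma$, and that by Theorem~\ref{SZsetprop} (specifically \eqref{nest-cros-i}) one has $\nest_i\Psi(\sigma)=(\bac)_i\,\sigma$ for all $i\in[n]$. The point is that $\S_n(213)$ is exactly the set of permutations whose $\psi_{FV}$-image has trivial label sequence.

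First I would show that $\widetilde{\Psi}$ maps $\S_n(213)$ into $\S_n(321)$. By Lemma~\ref{general-231-321}, $\S_n(213)=\S_n(\bac)$, so $\sigma\in\S_n(213)$ means $(\bac)\sigma=0$, i.e.\ $(\bac)_i\,\sigma=0$ for every $i\in[n]$. By \eqref{nest-cros-i} this gives $\nest_i\Psi(\sigma)=0$ for every $i$, hence $\nest\Psi(\sigma)=0$, and by Lemma~\ref{321:nest}(i) this is equivalent to $\Psi(\sigma)\in\S_n(321)$. Thus $\widetilde{\Psi}(\S_n(213))\subseteq\S_n(321)$.

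Next I would argue injectivity and a cardinality count to upgrade this to a bijection, exactly as in the proof of Theorem~\ref{thm2.8}. Since $\Psi\colon\S_n\to\S_n$ is a bijection, its restriction $\widetilde{\Psi}$ is automatically injective on $\S_n(213)$. It remains to note that $|\S_n(213)|=|\S_n(321)|=C_n$ (both are classical Catalan-many pattern classes; alternatively one can invoke Theorem~\ref{thm2.7}(3) and (4), which identify both $\S_n(213)$ and $\S_n(321)$ with $\2MP_n$ via $\widetilde{\psi}_{FV}$ and $\widetilde{\psi}_{YZL}$, so $\widetilde{\Psi}=\widetilde{\psi}_{YZL}^{\,-1}\circ\widetilde{\psi}_{FV}$ is manifestly a bijection by Theorem~\ref{HMZ}). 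An injection between finite sets of equal cardinality is a bijection, so $\widetilde{\Psi}\colon\S_n(213)\to\S_n(321)$ is a bijection.

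I do not anticipate a genuine obstacle here; the statement is essentially a corollary of the machinery already assembled, namely Theorem~\ref{SZsetprop}, Lemma~\ref{321:nest}, and Lemma~\ref{general-231-321} (together with the factorization $\psi_{FV}=\psi_{YZL}\circ\Psi$ of Theorem~\ref{HMZ} if one prefers the Laguerre-history viewpoint). The only point requiring a little care is to cite the correct translation $\S_n(213)=\S_n(\bac)$ from Lemma~\ref{general-231-321} rather than, say, $\S_n(231)$; choosing the wrong vincular pattern would break the chain $(\bac)=0\Leftrightarrow\nest\circ\Psi=0\Leftrightarrow 321\text{-avoiding}$.
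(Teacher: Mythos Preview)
Your proof is correct and follows the same overall strategy as the paper: show that $\widetilde{\Psi}(\S_n(213))\subseteq\S_n(321)$ by proving $\nest\Psi(\sigma)=0$, then invoke Lemma~\ref{321:nest}(i), injectivity of $\Psi$, and the Catalan cardinality count. The difference lies in how the key step $\nest\Psi(\sigma)=0$ is obtained. You cite Theorem~\ref{SZsetprop}, specifically \eqref{nest-cros-i}, which already packages the identity $(\bac)_i\,\sigma=\nest_i\,\Psi(\sigma)$; the paper instead unpacks the definition of $\Psi$ directly, computing $(\bca)_{i+1}\hat{\sigma}$ via Lemma~\ref{lem-thm2.9}, describing the biwords $f\choose f'$ and $g\choose g'$ explicitly, checking the nesting indices of $\tau=\Phi(\hat{\sigma})$ by hand, and finally appealing to \eqref{nestnest}. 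Your route is shorter precisely because it reuses the machinery established in the proof of Theorem~\ref{SZsetprop}, whereas the paper's argument is more self-contained and exhibits concretely what the biwords look like in the $213$-avoiding case. Both are valid; yours is the more economical deduction once Theorem~\ref{SZsetprop} is in hand.
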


\begin{proof}
	If  $\sigma\in\S_n(213)$, 
	then $(\bac)_i\sigma=0$ for $i\in [n]$. Thus, $(\bca)_1\hat{\sigma}=0$, and  by Lemma~\ref{lem-thm2.9}, $(\bca)_{i+1}\hat{\sigma}=1$ if $i+1$ is 
	a nondescent top 
	and $(\bca)_{i+1}\hat{\sigma}=0$ otherwise.
	By definition of $\Phi$,  we construct two biwords, $f \choose f'$ and $g \choose g'$,  where 
	$f$ (resp. $g$) is the subword of descent bottoms (resp. nondescent bottoms) in $\hat{\sigma}$ ordered increasingly, and $f'$ (resp. $g'$) is the permutation of descent tops (resp. nondescent tops) in $\hat{\sigma}$ such that the   letters  (resp. except $1$ at the end) in $f'$ (resp. $g'$) are  in increasing order.
	
	 Let  $\tau=\Phi(\hat{\sigma})$.  It is not hard to verify that   $\nest_i(\tau)=1$
	if $i\in g'\setminus\{1\}$ and $\nest_i(\tau)=0$ otherwise. 
	Thus, by \eqref{nestnest},  we have $\nest(\widetilde{\Psi}(\sigma))=0$. By 
	Lemma~\ref{321:nest}, 
	$\widetilde{\Psi}(\sigma)\in \S_n(321)$.
	For $\sigma_1, \sigma_2\in\S_n(213)$,
	since $\Psi$ is a bijection,
	we have $\widetilde{\Psi}(\sigma_1)\neq\widetilde{\Psi}(\sigma_2)$. And $|\S_n(213)|=|\S_n(321)|=C_n$,
	so $\widetilde{\Psi}$ is a bijection from $\S_n(213)$ to $\S_n(321)$.
\end{proof}

\begin{example}
      		If $\sigma=1~6~ 8~ 9~ 7~2~5~3~4$, then $\hat{\sigma}=2~7~9~10~8~3~6~4~5~1$, and reading from left to right,  we obtain the corresponding 
numbers 
		$(\bca)_i:1,1,1,0,0,1,0,1,0,0$  for $i=2, 7,\ldots, 1$, and 
        		$$
        		{f\choose f'}
        		= \left(
        		{1 \atop 5}~
        		{3 \atop 6}~
        		{4 \atop 8}~
        		{8 \atop 10}
        		\right),
        		\quad
        		{g\choose g'}
        		= \left(
        		{2 \atop 2}~
        		{5 \atop 3}~
        		{6 \atop 4}~
        		{7 \atop 7}~
		      {9 \atop 9}~		
        		{10 \atop 1}
        		\right).
        		$$
        		Hence
        		$$
        	w
        		= \left( {f \atop f'}~{g \atop g'} \right)
        		= \left(
        		{1 \atop 5}~
        		{3 \atop 6}~
        		{4 \atop 8}~
        		{8 \atop 10}~
        		{2 \atop 2}~
        		{5 \atop 3}~
        		{6 \atop 4}~
        		{7 \atop 7}~
        		{9 \atop 9}~
        		{10 \atop 1}
        		\right)
        		\to
        		\left(
        		{10 \atop 1}~
        		{2 \atop 2}~
        		{5 \atop 3}~
        		{6 \atop 4}~
        		{1 \atop 5}~
        		{3 \atop 6}~
        		{7 \atop 7}~
        		{4 \atop 8}~
        		{9 \atop 9}~
        		{8 \atop 10}
        		\right).
        		$$
        		Thus $\tau={\Phi}(\hat{\sigma})= 10~2~5~6~1~3~7~4~9~8$, and $\widetilde{\Psi}(\sigma)=\tau(2)\dots\tau(10)=2~5~6~1~3~7~4~9~8$.
        	\end{example}

Combining Theorems~\ref{thm2.7}, \ref{thm2.8}, \ref{thm2.9} and Figure~\ref{FVBFZB} we obtain the diagrams in Figure~\ref{pattern-digram}.

\begin{figure}
\begin{center}

\begin{tikzpicture}[scale=0.3]

\node (LH) at (-12,10) {$\2MP_n$};

\node (SL) at (-20,0) {$\S_{n+1}(213)$};

\node (SC) at (-5,0) {$\S_{n+1}(321)$};

\node (1) at (-18, 5){$\widetilde{\psi}_{{FV}}$};

\node (2) at (-6, 5){$\widetilde{\psi}_{YZL}$};

\node (3) at (-12, 1) {$\widetilde{\Psi}$}; 
\begin{scope}[font=\footnotesize,->]

\draw (SL) -- (LH);

\draw (SC) -- (LH);

\draw (SL) --(SC);

\end{scope}
\end{tikzpicture}
\hspace{1cm}
\begin{tikzpicture}[scale=0.3]
\node (LH) at (-12,10) {$\2MP_n^*$};
\node (SL) at (-20,0) {$\S_{n}(231)$};

\node (SC) at (-5,0) {$\S_{n}(321)$};

\node (1) at (-18, 5){$\widetilde{\phi}_{{FV}}$};

\node (2) at (-6, 5){$\widetilde{\phi}_{{FZ}}$};

\node (3) at (-12, 1) {$\widetilde{\Phi}$}; 
\begin{scope}[font=\footnotesize,->]

\draw (SL) --  (LH);

\draw (SC) -- (LH);

\draw (SL) -- (SC);

\end{scope}
\end{tikzpicture}
\end{center}
\caption{\label{pattern-digram}Two factorizations: $\widetilde{\psi}_{FV}=\widetilde{\psi}_{YZL}\circ \widetilde{\Psi}$  
and $\widetilde{\phi}_{FV}=\widetilde{\phi}_{FZ}\circ \widetilde{\Phi}$}
\label{pattern-diagram}
\end{figure}
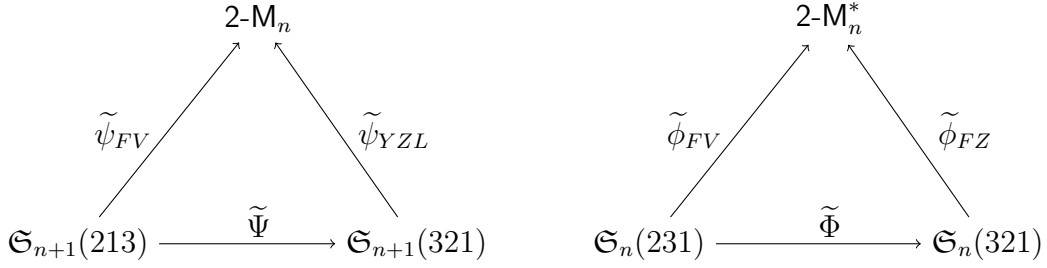

\section{Main results}

For  a finite set of permutations  $\Omega$ and  
$m$ statistics  $\stat_1,  \ldots, \stat_m$  on $\Omega$, we define the generating polynomial 
\begin{align}
P^{(\stat_1,  \ldots,  \stat_m)}(\Omega; t_1, \ldots,  t_m):=&\sum_{\sigma\in\Omega}
t_1^{\stat_1\,\sigma}\,\ldots\, t_m^{\stat_m\,\sigma}.
\end{align}

We define the polynomial
\begin{align}\label{C1eq:dfA1-spec}
A_n(p,q,t) := 
\sum_{\sigma\in \S_n} p^{\nest\, \sigma} q^{\cros \,\sigma}  t^{\exc\,\sigma}.
\end{align}
The following is a 
generalization of Stembridge's identity~\eqref{eq:stembridge}.
 \begin{thm}\label{pqpk}
 For $n\geq1$, we have 
\begin{equation}\label{eqpk1}
A_{n}(p, q, t)=\left(\frac{1+xt}{1+x}\right)^{n-1}P^{(\nest, \cros,\cpk^*,\exc)}
\left(\S_n; p, q, \frac{(1+x)^{2}t}{(x+t)(1+xt)},\frac{x+t}{1+xt}\right),
\end{equation}
equivalently, 
\begin{equation}
P^{(\nest, \cros, \cpk^*,\exc)}(\S_n;  p, q, x,t)=\left(\frac{1+u}{1+uv}\right)^{n-1}A_{n}(p, q, v),\label{eupk3}
\end{equation}
where $u=\frac{1+t^{2}-2xt-(1-t)\sqrt{(1+t)^{2}-4xt}}{2(1-x)t}$ and
$v=\frac{(1+t)^{2}-2xt-(1+t)\sqrt{(1+t)^{2}-4xt}}{2xt}.$
\end{thm}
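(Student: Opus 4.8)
The plan is to establish \eqref{eqpk1} by the continued fraction method outlined in the introduction, so I begin with the multivariate refinement of the Eulerian generating function. The starting point is a J-fraction expansion of $\sum_{n\geq0}A_{n+1}(p,q,t)z^n$: by the Foata--Zeilberger bijection $\phi_{FZ}$ (and its relation to $\Psi$ via the star variation in Theorem~\ref{SZsetprop}) together with Flajolet's theory, the statistics $(\nest,\cros,\exc)$ translate into weights on Laguerre histories, and the Shin--Zeng continued fraction formulas from \cite{SZ12, SZ16} give
\begin{align}\label{plan:jf}
\sum_{n\geq0}A_{n+1}(p,q,t)z^n=\cfrac{1}{1-b_0 z-\cfrac{\lambda_1 z^2}{1-b_1 z-\cfrac{\lambda_2 z^2}{1-\cdots}}},
\end{align}
with $b_h=[h+1]_{p,q}+t[h+1]_{p,q}$-type linear factors and $\lambda_h=t[h]_{p,q}[h+1]_{p,q}$-type quadratic factors, where $[h]_{p,q}=p^{h-1}+p^{h-2}q+\cdots+q^{h-1}$. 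The precise shape of $b_h,\lambda_h$ is exactly what the cited continued fraction results supply; I would quote them rather than rederive them.

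Next I would compute the analogous J-fraction for the left-hand-side polynomial $P^{(\nest,\cros,\cpk^*,\exc)}(\S_n;p,q,x,t)$. Here the extra variable $x$ marks cyclic star peaks $\cpk^*$; under $\psi=\psi_{FV}\circ\Psi^{-1}$ (Theorem~\ref{HMZ}) and the encoding \eqref{HMZ-STAR}, a $\cpk^*$ corresponds to a down-step $\sd$ in the 2-Motzkin path, so $x$ weights each $\sd$-step. Contracting the relevant S-fraction, or directly reading off the Motzkin-path weights, yields a J-fraction of the same form \eqref{plan:jf} but with $b_h\mapsto [h+1]_{p,q}+x t[h+1]_{p,q}$-shape and $\lambda_h\mapsto x t[h]_{p,q}[h+1]_{p,q}$-shape; the key point is that $x$ enters $b_h$ and $\lambda_h$ only through the combinations that also carry $t$, reflecting that each $\sd$ step is decorated by both.

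The heart of the proof is then an algebraic identity between continued fractions: I must show that the substitution $t\mapsto \frac{x+t}{1+xt}$, $p,q$ unchanged, combined with the scalar prefactor $\bigl(\tfrac{1+xt}{1+x}\bigr)^{n-1}$, carries the J-fraction for $A_n(p,q,\cdot)$ to the J-fraction for $P^{(\nest,\cros,\cpk^*,\exc)}$. Concretely, writing $\widetilde t=\frac{(1+x)^2 t}{(x+t)(1+xt)}$ and $s=\frac{x+t}{1+xt}$ as in the statement, one checks the two families of scalar identities
\begin{align}\label{plan:scal}
b_h\bigl(\widetilde t\,\bigr)\cdot\frac{1+x}{1+xt}=b_h^{A}(s),\qquad \lambda_h\bigl(\widetilde t\,\bigr)\cdot\Bigl(\frac{1+x}{1+xt}\Bigr)^2=\lambda_h^{A}(s),
\end{align}
where the superscript $A$ denotes the coefficients in the Eulerian J-fraction; since the $[h]_{p,q}$ factors are untouched, \eqref{plan:scal} reduces to a one-variable check on the ``$1+t$'' versus ``$1+xt$'' pieces and the $t$ versus $xt$ pieces. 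A standard equivalence-transformation lemma for J-fractions (rescaling $z\mapsto cz$ propagates as $b_h\mapsto c\,b_h$, $\lambda_h\mapsto c^2\lambda_h$, and multiplies the series by nothing, while the prefactor accounts for the index shift $A_{n+1}$ vs $A_n$) then turns \eqref{plan:scal} into \eqref{eqpk1}. Finally \eqref{eupk3} is obtained from \eqref{eqpk1} by algebraic inversion: solving $s=\frac{x+t}{1+xt}$ and the compatible relation for $\widetilde t$ in terms of the new variables $x,t$ produces the stated $u,v$ via the quadratic formula, which is where the square root $\sqrt{(1+t)^2-4xt}$ enters.

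The main obstacle I anticipate is bookkeeping rather than conceptual: correctly identifying the $p,q$-refined J-fraction coefficients $b_h,\lambda_h$ for the four-variable polynomial $P^{(\nest,\cros,\cpk^*,\exc)}$ from the Laguerre-history model — in particular verifying that $x$ couples to $t$ in precisely the way that makes \eqref{plan:scal} hold — and then carrying out the change of variables cleanly so that the prefactor $\bigl(\tfrac{1+xt}{1+x}\bigr)^{n-1}$ matches the exponent $n-1$ coming from the shift between $A_n$ and $A_{n+1}$. Once \eqref{plan:scal} is in hand, everything else is formal manipulation of continued fractions and the final quadratic inversion.
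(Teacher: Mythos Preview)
Your approach is essentially the paper's: specialize the J-fraction of Lemma~\ref{Cha1generalA} to get the expansion of $\sum_{n\ge0}P^{(\nest,\cros,\cpk^*,\exc)}(\S_{n+1};p,q,x,t)z^n$, and then verify that the substitution $(t,x,z)\mapsto\bigl(\frac{x+t}{1+xt},\frac{(1+x)^2t}{(x+t)(1+xt)},\frac{(1+xt)z}{1+x}\bigr)$ collapses it to the J-fraction for $\sum_{n\ge0} A_{n+1}(p,q,t)z^n$. One correction to your bookkeeping: the variable $x$ enters only $\lambda_h=xt\,[h]_{p,q}[h+1]_{p,q}$ and \emph{not} $b_h=(1+t)[h+1]_{p,q}$, since under the encoding~\eqref{HMZ-STAR} the statistic $\cpk^*$ labels down-steps $\sd$ only (the level steps encode $\cda^*\cup\fix^*$ and $\cdd^*$); with this fix your scalar identities~\eqref{plan:scal} become simply $(1+s)\tfrac{1+xt}{1+x}=1+t$ and $s\,\widetilde t\,\bigl(\tfrac{1+xt}{1+x}\bigr)^{2}=t$, both immediate.
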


By Corollary~\ref{starpqzhuang}, we obtain the following linear 
generalization of Stembridge's identity.
 \begin{cor}\label{cor:pqpk}
 For $n\geq1$, we have
\begin{equation}\label{eqp1bis}
A_{n}(p, q, t)=\left(\frac{1+xt}{1+x}\right)^{n-1}P^{(\bac, \cab,\peak-1,\des)}\left(\S_n; p, q, \frac{(1+x)^{2}t}{(x+t)(1+xt)},\frac{x+t}{1+xt}\right),
\end{equation}
equivalently, 
\begin{equation}
P^{(\bac, \cab, \peak-1,\des)}( \S_n; p, q, x,t)=\left(\frac{1+u}{1+uv}\right)^{n-1}A_{n}(p, q, v),\label{linear-eupk3}
\end{equation}
where $u=\frac{1+t^{2}-2xt-(1-t)\sqrt{(1+t)^{2}-4xt}}{2(1-x)t}$ and
$v=\frac{(1+t)^{2}-2xt-(1+t)\sqrt{(1+t)^{2}-4xt}}{2xt}.$
\end{cor}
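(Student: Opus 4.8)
The plan is to obtain Corollary~\ref{cor:pqpk} as an immediate consequence of Theorem~\ref{pqpk} together with the equidistribution recorded in Corollary~\ref{starpqzhuang}. The crucial observation is that the four statistics appearing in \eqref{eqpk1}--\eqref{eupk3}, namely $(\nest,\cros,\cpk^*,\exc)$, form a sub-quadruple of the sextuple $(\nest,\cros,\exc,\cdd^*,\cda^*+\fix^*,\cpeak^*)$ from Corollary~\ref{starpqzhuang} (recall that $\cpk^*$ and $\cpeak^*$ denote the same statistic), and that they are matched there, coordinate by coordinate, with the sub-quadruple $(\bac,\cab,\peak-1,\des)$ of $(\bac,\cab,\des,\da,\dd,\peak-1)$.

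First I would spell this out: discarding the third, fourth and fifth coordinates of both sextuples in Corollary~\ref{starpqzhuang} shows that the quadruples $(\nest,\cros,\cpeak^*,\exc)$ and $(\bac,\cab,\peak-1,\des)$ are equidistributed over $\S_n$; equivalently, as generating polynomials,
$$P^{(\nest,\cros,\cpk^*,\exc)}(\S_n;p,q,x,t)=P^{(\bac,\cab,\peak-1,\des)}(\S_n;p,q,x,t).$$
Substituting this identity into the right-hand sides of the two equivalent forms \eqref{eqpk1} and \eqref{eupk3} of Theorem~\ref{pqpk} yields precisely \eqref{eqp1bis} and \eqref{linear-eupk3}, with the very same substitutions of $t$ and the same expressions for $u$ and $v$; since the passage between the two forms of Theorem~\ref{pqpk} is an algebraic manipulation that does not involve the combinatorial model, it transfers verbatim, and nothing further is required.

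There is essentially no obstacle here beyond bookkeeping: the one point to verify carefully is the coordinate matching, namely that under the equidistribution of Corollary~\ref{starpqzhuang} one indeed has $\nest\leftrightarrow\bac$, $\cros\leftrightarrow\cab$, $\cpeak^*\leftrightarrow\peak-1$ and $\exc\leftrightarrow\des$, so that the variable assignments $p,q,x,t$ in Theorem~\ref{pqpk} transport to the correct statistics on the descent side. Once this is checked, Corollary~\ref{cor:pqpk} follows formally from Theorem~\ref{pqpk}.
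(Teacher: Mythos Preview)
Your proposal is correct and follows exactly the paper's approach: the paper derives Corollary~\ref{cor:pqpk} directly from Theorem~\ref{pqpk} via the equidistribution in Corollary~\ref{starpqzhuang}, precisely as you describe. One small bookkeeping slip: you write ``discarding the third, fourth and fifth coordinates,'' but in fact you keep the third coordinate ($\exc\leftrightarrow\des$) and discard only the fourth and fifth ($\cdd^*\leftrightarrow\da$ and $\cda^*+\fix^*\leftrightarrow\dd$); the quadruple you end up with is correct regardless.
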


\begin{remark}  
 When $x=1$ or $p=q=1$
we recover two special cases of \eqref{eqpk1} due to 
 Br\"{a}nd\'{e}n\cite[Eq (5.1)]{Bra08} and  Zhuang~\cite[Theorem $4.2$]{Zhuang17}, respectively.
\end{remark}

With Lemma \ref{general-231-321} and \eqref{peak-val}, letting $p=0$ (resp. $q=0$) in Corollary \ref{cor:pqpk}, we obtain the following corollary.

\begin{cor}\label{pqpk231}
For all positive integers $n$ and each 
triple statistic 
\begin{align*}
(\tau, \stat_1, \stat_2) \in &\{(213, \cab, \valley),
(312, \bac, \valley)\},
\end{align*}
 we have
\begin{equation}\label{eq231pk1}
\begin{split}
&P^{(\stat_1, \des)}(\S_n(\tau); q, t)\\
=&\left(\frac{1+xt}{1+x}\right)^{n-1}P^{(\stat_1, \stat_2,\des)}\left(\S_n(\tau); q,  \frac{(1+x)^{2}t}{(x+t)(1+xt)}, \frac{x+t}{1+xt}\right),
\end{split}
\end{equation}
equivalently, 
\begin{equation}
P^{(\stat_1, \stat_2,\des)}(\S_n(\tau); q, x,t)=\left(\frac{1+u}{1+uv}\right)^{n-1}P^{(\stat_1, \des)}(\S_n(\tau); q, v),\label{eu231pk2}
\end{equation}
where $u=\frac{1+t^{2}-2xt-(1-t)\sqrt{(1+t)^{2}-4xt}}{2(1-x)t}$ and
$v=\frac{(1+t)^{2}-2xt-(1+t)\sqrt{(1+t)^{2}-4xt}}{2xt}$.
\end{cor}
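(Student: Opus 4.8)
The plan is to derive this as a specialization of Corollary~\ref{cor:pqpk}. The key observation is that setting $p=0$ (resp.\ $q=0$) in the generating polynomial $A_n(p,q,t)=\sum_{\sigma\in\S_n}p^{\nest\,\sigma}q^{\cros\,\sigma}t^{\exc\,\sigma}$ restricts the sum to permutations with $\nest\,\sigma=0$ (resp.\ $\cros\,\sigma=0$). By Lemma~\ref{321:nest}(i), the condition $\nest\,\sigma=0$ cuts the sum down to $\S_n(321)$; the analogous statement for $\cros$ will be handled by applying the inverse map, since $\icr\,\sigma=\cros\,\sigma^{-1}$ and $\exc$ behaves predictably under inversion. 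So first I would record that $A_n(0,q,t)$ and $A_n(p,0,t)$ become, after the appropriate bijective bookkeeping, the $\cros$- (resp.\ $\nest$-) and $\exc$-generating polynomials over $\S_n(321)$.

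Next I would transport this through the bijection $\widetilde{\Psi}$ of Theorem~\ref{thm2.9}, which is a bijection from $\S_n(213)$ to $\S_n(321)$, together with Theorem~\ref{SZsetprop}, which gives $((\bac)_i,(\cab)_i)\sigma=(\nest_i,\cros_i)\Psi(\sigma)$ and hence the global identities $(\bac,\cab)\sigma=(\nest,\cros)\Psi(\sigma)$, and the statistic correspondence in \eqref{set-stat} translating $(\val,\peak_n,\dd)$ on $\sigma$ to cyclic star statistics on $\Psi(\sigma)$. Combined with the elementary relations $\asc=\val+\da$, $\des=\peak+\dd-1$, and $\peak-1=\val$, the left-hand side $P^{(\bac,\cab,\peak-1,\des)}(\S_n;p,q,x,t)$ of \eqref{linear-eupk3} restricts, upon $p=0$, to $P^{(\cab,\valley,\des)}(\S_n(213);q,x,t)$ — using $\S_n(\bac)=\S_n(213)$ from Lemma~\ref{general-231-321} is not quite what is needed here; rather one uses that $\bac$-value zero is equivalent, via $\widetilde\Psi$ and $\nest\equiv 0$, to living in $\S_n(213)$. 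Similarly $q=0$ yields $P^{(\bac,\valley,\des)}(\S_n(312);q,x,t)$, where now $\S_n(\cab)=\S_n(312)$ by Lemma~\ref{general-231-321} pins down the pattern class. I would set this up carefully so that each of the two triples $(\tau,\stat_1,\stat_2)\in\{(213,\cab,\valley),(312,\bac,\valley)\}$ emerges from one of the two specializations.

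The substitution of variables is inherited verbatim from Corollary~\ref{cor:pqpk}: the quadratic change $x\mapsto \frac{(1+x)^2 t}{(x+t)(1+xt)}$, $t\mapsto \frac{x+t}{1+xt}$ and its inverse involving $u$ and $v$ are identical, so \eqref{eq231pk1} and \eqref{eu231pk2} follow by simply restricting the identities \eqref{eqp1bis} and \eqref{linear-eupk3} to the appropriate coefficient of $p^0$ or $q^0$ and reinterpreting each side via the pattern-avoidance dictionary above. The main obstacle I anticipate is purely bookkeeping: making sure that ``$p=0$'' and ``$q=0$'' are matched to the correct pattern ($213$ versus $312$) and to the correct surviving statistic among $\{\cab,\bac\}$, since the bijection $\widetilde\Psi$ swaps the roles of the linear statistics $\bac,\cab$ with the cyclic $\nest,\cros$, and one must also check that $\valley=\peak-1$ survives the specialization intact (it does, since $\valley$ does not interact with $\nest$ or $\cros$). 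Once the dictionary is laid out, the identity is immediate. I would also remark that, alternatively, one can prove Corollary~\ref{pqpk231} directly by the continued-fraction method of Section~5 applied to the J-fractions \eqref{2-motzkin}--\eqref{2-motzkin-star} for the Narayana polynomials, which may be cleaner if the bijective bookkeeping proves delicate; but the specialization argument is the shortest route given what has been established.
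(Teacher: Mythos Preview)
Your overall strategy—specialize Corollary~\ref{cor:pqpk} at $p=0$ (resp.\ $q=0$)—is exactly the paper's approach, but you overcomplicate the execution and in doing so talk yourself out of the key step. You write that ``$\S_n(\bac)=\S_n(213)$ from Lemma~\ref{general-231-321} is not quite what is needed here.'' In fact it is \emph{precisely} what is needed. Both sides of \eqref{eqp1bis} are already expressed in linear statistics: the right-hand side manifestly so, and the left-hand side $A_n(p,q,t)$ because, by Corollary~\ref{starpqzhuang} (which is how Corollary~\ref{cor:pqpk} is obtained from Theorem~\ref{pqpk}), one has $A_n(p,q,t)=\sum_{\sigma\in\S_n}p^{(\bac)\sigma}q^{(\cab)\sigma}t^{\des\,\sigma}$. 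Setting $p=0$ therefore kills all $\sigma$ with $(\bac)\sigma>0$, i.e., restricts both sides to $\S_n(\bac)=\S_n(213)$ by Lemma~\ref{general-231-321}; then $\peak-1=\valley$ from \eqref{peak-val} gives the $(213,\cab,\valley)$ case. The $q=0$ specialization yields $(312,\bac,\valley)$ via $\S_n(\cab)=\S_n(312)$ in the same way. That is the whole proof.

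Your proposed detour through the cyclic interpretation of $A_n(p,q,t)$, Lemma~\ref{321:nest}, and the bijection $\widetilde\Psi$ is not only unnecessary but does not work as you describe it: $\Psi$ carries $\des$ to $\drop^*-1$ (Corollary~\ref{SZbijection}), not to $\exc$, so you cannot directly match $\sum_{\sigma\in\S_n(321)}q^{\cros\,\sigma}t^{\exc\,\sigma}$ with $P^{(\cab,\des)}(\S_n(213);q,t)$ via $\widetilde\Psi$ alone. The equality of these two polynomials is true, but it follows from the equidistribution in Corollary~\ref{starpqzhuang} (which uses $\psi^{-1}\circ\theta\circ\psi_{FV}$, not $\Psi$), and that is exactly the shortcut you should take from the start.
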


\begin{remark}
When $x=1$, \eqref{eq231pk1} reduces to \cite[Eqs. (1.5) and (1.6)]{FTHZ19}. 
When $(\tau, \stat_1, \stat_2)=(213, \cab, \valley)$ and $q=1$,  \eqref{eq231pk1} reduces to \cite[Corollary $5.3$]{Zhuang17}.
\end{remark}

From  \eqref{INV} and  \eqref{eqpk1} we derive the following result, which is an extension of Shin and Zeng~\cite[Theorem 1]{SZ16}.
\begin{cor}
For $n\geq 1$, 
\begin{equation}\label{symexc1}
\begin{split}
&\sum_{\sigma\in\S_n}q^{\inv\,\sigma-\exc\,\sigma}t^{\exc\,\sigma}\\
=&\left(\frac{1+xt}{1+x}\right)^{n-1}P^{(\bac, \cab,\peak-1,\des)}\left( \S_n;q^2, q, \frac{(1+x)^{2}t}{(x+t)(1+xt)},\frac{x+t}{1+xt}\right).
\end{split}
\end{equation}
\end{cor}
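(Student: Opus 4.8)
The plan is to deduce the identity \eqref{symexc1} directly from Theorem~\ref{pqpk} (or equivalently Corollary~\ref{cor:pqpk}) together with the inversion formula \eqref{INV}. First I would recall that \eqref{INV} reads $\inv = \exc + 2\nest + \cros$, so that on $\S_n$ we have
\begin{align*}
\sum_{\sigma\in\S_n} q^{\inv\,\sigma-\exc\,\sigma}\, t^{\exc\,\sigma}
= \sum_{\sigma\in\S_n} q^{2\nest\,\sigma+\cros\,\sigma}\, t^{\exc\,\sigma}
= \sum_{\sigma\in\S_n} (q^2)^{\nest\,\sigma}\, q^{\cros\,\sigma}\, t^{\exc\,\sigma}
= A_n(q^2, q, t),
\end{align*}
using the definition \eqref{C1eq:dfA1-spec} of $A_n(p,q,t)$ with the specialization $p = q^2$. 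So the left-hand side of \eqref{symexc1} is exactly $A_n(q^2, q, t)$.

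Next I would invoke Corollary~\ref{cor:pqpk}, specifically the identity \eqref{eqp1bis}, with the substitution $p \mapsto q^2$, $q \mapsto q$. This immediately yields
\[
A_{n}(q^2, q, t)=\left(\frac{1+xt}{1+x}\right)^{n-1}P^{(\bac, \cab,\peak-1,\des)}\left(\S_n; q^2, q, \frac{(1+x)^{2}t}{(x+t)(1+xt)},\frac{x+t}{1+xt}\right),
\]
which is precisely the right-hand side of \eqref{symexc1}. Combining the two displays gives the claimed identity. One could equally well route the argument through Theorem~\ref{pqpk} and then apply Corollary~\ref{starpqzhuang} (the equidistribution of $(\nest, \cros, \exc, \cdd^*, \cda^*+\fix^*, \cpk^*)$ with $(\bac, \cab, \des, \da, \dd, \peak-1)$ on $\S_n$) to transfer from cyclic-star statistics to linear statistics; that is in fact how Corollary~\ref{cor:pqpk} itself is obtained from Theorem~\ref{pqpk}, so either path works.

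There is essentially no obstacle here: the corollary is a pure specialization, and the only thing to be careful about is the bookkeeping of which variable in $A_n(p,q,t)$ receives $q^2$ versus $q$, together with confirming that $\inv - \exc = 2\nest + \cros$ holds on all of $\S_n$ (not merely on a restricted class), which is exactly the content of \eqref{INV} as cited from~\cite{SZ10}. The statement of \eqref{symexc1} and its comparison with Shin and Zeng's \cite[Theorem 1]{SZ16} then follows by noting that setting $x=1$ collapses the right-hand side to $P^{(\bac,\cab,\peak-1,\des)}(\S_n; q^2, q, t, t)$, recovering their result, and general $x$ is the promised extension.
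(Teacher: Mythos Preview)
Your proof is correct and follows essentially the same route as the paper, which simply states that the corollary follows from \eqref{INV} and \eqref{eqpk1} (equivalently \eqref{eqp1bis}): rewrite $\inv-\exc=2\nest+\cros$ to identify the left-hand side as $A_n(q^2,q,t)$, then specialize $(p,q)\mapsto(q^2,q)$ in Corollary~\ref{cor:pqpk}. One small slip in your closing remark: setting $x=1$ does not give arguments $(q^2,q,t,t)$ on the right-hand side but rather $(q^2,q,\tfrac{4t}{(1+t)^2},1)$ with the prefactor $\bigl(\tfrac{1+t}{2}\bigr)^{n-1}$; this does not affect the proof of \eqref{symexc1} itself.
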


Define the cycle-refinement  of the Eulerian polynomial $A_{n}(t)$ by
$$
A_n^{(\cyc^*-\fix^*, \exc)}(q, t):=\sum_{\sigma\in\S_n}q^{(\cyc^*-\fix^*)\,\sigma}t^{\exc\,\sigma}, 
$$
we obtain 
 a cyclic analogue of Zhuang's formula~\cite[Theorem $4.2$]{Zhuang17}.  
\begin{thm}
\label{t-Aqcpkexc} For $n\geq1$, we have
\begin{equation}\label{Aqexcpk1}
\begin{split}
&A_{n}^{(\cyc^*-\fix^*, \exc)}(q, t)\\
=&\left(\frac{1+xt}{1+x}\right)^{n-1}P^{(\cyc^*-\fix^*, \cpeak^*,\exc)}\left(\S_n; q, \frac{(1+x)^{2}t}{(x+t)(1+xt)},\frac{x+t}{1+xt}\right),
\end{split}
\end{equation}
equivalently, 
\begin{equation}
P^{(\cyc^*-\fix^*, \cpeak^*,\exc)}(\S_n; q, x,t)=\left(\frac{1+u}{1+uv}\right)^{n-1}A_{n}(q, v),\label{depk3}
\end{equation}
where $u=\frac{1+t^{2}-2xt-(1-t)\sqrt{(1+t)^{2}-4xt}}{2(1-x)t}$ and
$v=\frac{(1+t)^{2}-2xt-(1+t)\sqrt{(1+t)^{2}-4xt}}{2xt}.$
\end{thm}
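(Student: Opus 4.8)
The plan is to prove \eqref{Aqexcpk1} by comparing continued fraction expansions, in parallel with the proof of Theorem~\ref{pqpk}. First, observe that \eqref{Aqexcpk1} and \eqref{depk3} are equivalent: the substitution $(x,t)\mapsto\bigl(\tfrac{(1+x)^{2}t}{(x+t)(1+xt)},\tfrac{x+t}{1+xt}\bigr)$ and the substitution $(x,t)\mapsto(u,v)$ with $u,v$ as in the statement are mutually inverse, exactly as in the passage between \eqref{eqpk1} and \eqref{eupk3}, and the prefactor $\bigl(\tfrac{1+u}{1+uv}\bigr)^{n-1}$ is the scalar attached to this change of variable (here $A_n(q,v)$ in \eqref{depk3} stands for $A_n^{(\cyc^*-\fix^*,\exc)}(q,v)$). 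So it suffices to prove \eqref{depk3}, i.e. to identify the series $\sum_{n\ge0}P^{(\cyc^*-\fix^*,\cpeak^*,\exc)}(\S_n;q,x,t)\,z^n$, after the substitution and the rescaling $z\mapsto\tfrac{1+u}{1+uv}z$, with $\sum_{n\ge0}A_n^{(\cyc^*-\fix^*,\exc)}(q,v)\,z^n$.

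The heart of the matter is to write down the continued fraction expansion of $G(q,x,t;z):=\sum_{n\ge0}P^{(\cyc^*-\fix^*,\cpeak^*,\exc)}(\S_n;q,x,t)\,z^n$. Using the bijection $\psi_{YZL}=\psi_{FV}\circ\Psi^{-1}$ of Theorem~\ref{HMZ}, described in terms of star statistics via Theorem~\ref{SZsetprop} (equivalently, a star version of the Foata--Zeilberger encoding), one attaches to $\sigma$ a labelled $2$-Motzkin path in which $\exc$ becomes the number of $\su$- and $\lb$-steps, $\cpeak^*$ the number of $\sd$-steps, and $\cyc^*-\fix^*$ a weight $q$ carried by the steps that open a nontrivial cycle of $\sigma^*$. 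Inserting these weights into Flajolet's master formula expresses $G(q,x,t;z)$ as an explicit (Jacobi-type) continued fraction whose coefficients are polynomials in $q,x,t$ of the kind already computed in \cite{SZ12,SZ16}; specializing $x=1$, which trivializes $\cpeak^*$, gives the corresponding expansion of $\sum_{n\ge0}A_n^{(\cyc^*-\fix^*,\exc)}(q,t)\,z^n$, which degenerates to \eqref{gf:eulerian} at $q=1$.

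With both expansions available, \eqref{depk3} reduces to a purely algebraic identity: under $(x,t)\mapsto(u,v)$, the contraction of the continued fraction of $G$, rescaled by $\tfrac{1+u}{1+uv}$ in accordance with the contraction formula (Lemma~\ref{contra-formula}), must equal the continued fraction of $\sum_{n}A_n^{(\cyc^*-\fix^*,\exc)}(q,v)\,z^n$. As in Theorem~\ref{pqpk}, and in Corollary~\ref{starpqzhuang} for the linear counterpart, this amounts to verifying a family of rational identities in $q,x,t$ between the partial numerators and denominators, each by a direct computation; alternatively, the whole normalization can be isolated once as a substitution lemma for continued fractions and then quoted. As a consistency check, setting $q=1$ recovers Stembridge's identity \eqref{eq:stembridge}, now with $\cpeak^*$ in place of the peak number.

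The step I expect to be the real obstacle is locating the $\cyc^*-\fix^*$ contribution inside the continued fraction. Unlike $\exc$ and $\cpeak^*$, which are sums of local statistics of the path, the number of cycles is a global quantity, so one must exploit the fine structure of the Foata--Zeilberger type bijection to see that each nontrivial cycle of $\sigma^*$ contributes a single clean factor $q$, anchored at one distinguished step, and that this factorization is compatible with the simultaneous refinement by $\cpeak^*$ and $\exc$; this is precisely where the explicit continued fractions of Shin and Zeng do the heavy lifting. Once the $q$-weight is placed correctly, the remaining manipulations are of the same type as those used for Theorem~\ref{pqpk}, and present no further difficulty. A cyclic analogue of the modified Foata--Strehl action (as in the proofs of Theorems~\ref{thm:SZ3q} and \ref{thm:cpeakS_1}) should give an alternative, bijective route to \eqref{Aqexcpk1}.
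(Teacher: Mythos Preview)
Your approach is correct and essentially the same as the paper's: prove \eqref{Aqexcpk1} by comparing Jacobi-type continued fraction expansions of both sides. The paper's execution is however much shorter than you anticipate. The continued fraction carrying the $\cyc^*-\fix^*$ weight is not re-derived; it is simply quoted as Lemma~\ref{SZ12cycfix} (Eq.~(50) of \cite{SZ12}), which gives $\sum_{n\ge0}C_{n+1}(q,t,u,v,w)z^n=J[z;b_n,\lambda_n]$ with $b_n=(n+1)(tu+v)$ and $\lambda_n=n(q+n)tw$. Setting $u=v=w=1$ (and using $\exc=\wex^*$) gives the expansion of the left side, while setting $u=v=1$ and substituting $t\leftarrow\frac{x+t}{1+xt}$, $w\leftarrow\frac{(1+x)^2t}{(x+t)(1+xt)}$, $z\leftarrow\frac{(1+xt)z}{1+x}$ gives the right side; the two visibly coincide. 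So the step you flag as the ``real obstacle'' --- locating the cycle contribution inside the continued fraction --- is entirely absorbed by the citation to \cite{SZ12}, and no bijective analysis is needed here.
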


Let $p=q=1$ in \eqref{eqpk1} or $q=1$ in \eqref{Aqexcpk1}, we get the following corollary.

\begin{cor}
\label{t-Acpkexc} For $n\geq1$, we have
\begin{equation}
A_{n}(t)=\left(\frac{1+xt}{1+x}\right)^{n-1}P^{(\cpeak^*,\exc)}\left(\S_n; \frac{(1+x)^{2}t}{(x+t)(1+xt)},\frac{x+t}{1+xt}\right),\label{Aexcpk1}
\end{equation}
equivalently, 
\begin{equation}
P^{(\cpeak^*,\exc)}(\S_n; x,t)=\left(\frac{1+u}{1+uv}\right)^{n-1}A_{n}(v),\label{depk4}
\end{equation}
where $u=\frac{1+t^{2}-2xt-(1-t)\sqrt{(1+t)^{2}-4xt}}{2(1-x)t}$ and
$v=\frac{(1+t)^{2}-2xt-(1+t)\sqrt{(1+t)^{2}-4xt}}{2xt}.$
\end{cor}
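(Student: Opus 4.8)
The plan is to deduce this statement directly by specialization from Theorem~\ref{pqpk}, with Theorem~\ref{t-Aqcpkexc} serving as an independent cross-check. First I would set $p=q=1$ in the identity \eqref{eqpk1}. On the left-hand side, the definition \eqref{C1eq:dfA1-spec} together with the classical equidistribution $A_n(t)=\sum_{\sigma\in\S_n}t^{\exc\,\sigma}$ from \eqref{des-exc} shows that $A_n(p,q,t)$ collapses to $A_n(t)$. On the right-hand side the refined peak polynomial collapses as well:
\[
P^{(\nest,\cros,\cpeak^*,\exc)}(\S_n;1,1,x,t)=\sum_{\sigma\in\S_n}x^{\cpeak^*\,\sigma}t^{\exc\,\sigma}=P^{(\cpeak^*,\exc)}(\S_n;x,t).
\]
Substituting these two observations into \eqref{eqpk1} gives \eqref{Aexcpk1} immediately.

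Next I would obtain the equivalent inverse form \eqref{depk4}. The cleanest route is to apply the same $p=q=1$ specialization to \eqref{eupk3} rather than to \eqref{eqpk1}, which produces \eqref{depk4} verbatim. Alternatively, to pass from \eqref{Aexcpk1} to \eqref{depk4} directly, it suffices to recall that the change of variables $(x,t)\mapsto\bigl(\tfrac{(1+x)^{2}t}{(x+t)(1+xt)},\tfrac{x+t}{1+xt}\bigr)$ is inverted by $(x,t)\mapsto(u,v)$ with $u,v$ as in the statement — an elementary algebraic fact already used in the proof of Theorem~\ref{pqpk} — and then to simplify the prefactor $\bigl(\tfrac{1+xt}{1+x}\bigr)^{n-1}$ into $\bigl(\tfrac{1+u}{1+uv}\bigr)^{n-1}$ under that substitution.

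As a sanity check I would also run the argument through Theorem~\ref{t-Aqcpkexc}: setting $q=1$ in \eqref{Aqexcpk1} and \eqref{depk3}, and using $A_n^{(\cyc^*-\fix^*,\exc)}(1,t)=A_n(t)$ together with $P^{(\cyc^*-\fix^*,\cpeak^*,\exc)}(\S_n;1,x,t)=P^{(\cpeak^*,\exc)}(\S_n;x,t)$, must reproduce exactly the pair \eqref{Aexcpk1}--\eqref{depk4}. That two genuinely different parent identities specialize to the same corollary is a useful consistency confirmation.

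I do not expect any real obstacle here: the entire content sits in Theorem~\ref{pqpk} (and, redundantly, in Theorem~\ref{t-Aqcpkexc}), so what remains is only the routine bookkeeping of verifying that the variable specializations collapse the multivariate generating polynomials as claimed and that the two displayed forms correspond under the inverse substitution. If anything, the single point deserving a line of care is checking that the parent specialization $p=q=1$ is performed on the correct variables and that no stray factor survives in the prefactor.
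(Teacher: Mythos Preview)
Your proposal is correct and matches the paper's approach exactly: the paper states just before the corollary that it follows by letting $p=q=1$ in \eqref{eqpk1} or $q=1$ in \eqref{Aqexcpk1}, which is precisely the specialization (with cross-check) you carry out.
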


 Recall that a permutation $\sigma\in \S_n$ is a derangement if it has no fixed points, i.e., $\sigma(i)\neq i$ for all $i\in [n]$. 
%
%
%
%
Let
$$
D_n^{(\stat_1, \stat_2)}(q, t):=\sum\limits_{\sigma \in \mathfrak{D}_n} q^{\stat_1\,\sigma} t^{\stat_2\,\sigma},
$$ 
where $\D_n$ is  the set of $derangements$ in $\S_n$. 


Taking $(p, q, tq, r)=(q, 1,t, 0)$ (resp. $(p, q, tq, r)=(q^2, q, tq, 0)$) in
Theorem \ref{thm:cpeakS_1} and by \eqref{INV}, we obtain the following corollary. 
\begin{cor}\label{3.6}
For all positive integers $n$ and for each statistic $\stat\, \in \{\nest, \inv 
 \}$,  
\begin{equation}\label{exc3qpk2}
D_{n}^{(\stat, \exc)}(q, t)=\left(\frac{1+xt}{1+x}\right)^{n}P^{(\stat, \cpeak,\exc)}\left(\D_n; q, \frac{(1+x)^{2}t}{(x+t)(1+xt)},\frac{x+t}{1+xt}\right),
\end{equation}
equivalently, 
\begin{equation}
P^{(\stat, \cpeak,\exc)}(\D_n; q, x,t)=\left(\frac{1+u}{1+uv}\right)^{n}D_{n}^{(\stat, \exc)}(q, v),\label{depk2}
\end{equation}
where $u=\frac{1+t^{2}-2xt-(1-t)\sqrt{(1+t)^{2}-4xt}}{2(1-x)t}$ and
$v=\frac{(1+t)^{2}-2xt-(1+t)\sqrt{(1+t)^{2}-4xt}}{2xt}.$
\end{cor}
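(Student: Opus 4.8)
The plan is to derive Corollary~\ref{3.6} as a direct specialization of Theorem~\ref{thm:cpeakS_1}, exactly as the statement announces, so the real content is already packaged in that theorem; what remains is to (a) identify the correct substitution of parameters that kills fixed points and isolates derangements, and (b) translate the cycle statistics that survive the substitution. First I would recall the shape of Theorem~\ref{thm:cpeakS_1}: it is a master identity for a multivariate generating polynomial over $\S_n$ carrying (at least) variables $p,q,tq,r$ tracking $\nest$, $\cros$, the cyclic-peak/excedance data, and $r$ weighting fixed points (this is the standard device: in the Foata--Zeilberger encoding a fixed point is a level step at positive height coloured so that $r$ marks it, and setting $r=0$ deletes all histories with such a step, i.e.\ restricts to $\D_n$). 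So the first key step is: substitute $r=0$ into Theorem~\ref{thm:cpeakS_1} and observe that the left and right sides both collapse to sums over $\D_n$ rather than $\S_n$, with the power $n-1$ of the prefactor becoming $n$ (one extra factor because a derangement of $[n]$, having no fixed point, behaves like the ``all-steps-nontrivial'' case — concretely the Motzkin path now has $n$ non-fixed-point steps contributing to the height statistic instead of $n-1$; this matches the exponent $n$ appearing in \eqref{exc3qpk2} versus $n-1$ in \eqref{Aqexcpk1}).

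Next I would handle the two choices of $\stat$. For $\stat=\nest$ take $(p,q,tq,r)=(q,1,t,0)$: then the $\nest$-variable is $q$, the $\cros$-variable is $1$, so $\cros$ disappears and we are left with $\sum_{\sigma\in\D_n}q^{\nest\sigma}t^{\exc\sigma}=D_n^{(\nest,\exc)}(q,t)$ on one side and $P^{(\nest,\cpeak,\exc)}(\D_n;q,\cdot,\cdot)$ on the other, giving \eqref{exc3qpk2} for $\stat=\nest$. For $\stat=\inv$ take $(p,q,tq,r)=(q^2,q,tq,0)$ and invoke \eqref{INV}, namely $\inv=\exc+2\nest+\cros$: the weight $q^{2\nest}q^{\cros}t^{\exc}$ produced by the substitution is $q^{\inv-\exc}t^{\exc}\cdot(\text{nothing extra})$ once we absorb the $q$ sitting with $t$ — more precisely $p^{\nest}q^{\cros}(tq)^{\exc}=q^{2\nest+\cros+\exc}t^{\exc}=q^{\inv}t^{\exc}$, and since $\exc$ is also tracked separately by the argument of the polynomial one must be a little careful to match $D_n^{(\inv,\exc)}(q,t)$ on the nose; this bookkeeping with the doubled role of $q$ and the shared $\exc$-variable is the one genuinely fiddly point. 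On the cycle-statistic side the same substitution turns $P^{(\nest,\cros,\cpeak,\exc)}$ into $P^{(\inv,\cpeak,\exc)}$ via \eqref{INV} applied termwise, again restricted to $\D_n$ by $r=0$.

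Concretely the proof I would write is short: ``In Theorem~\ref{thm:cpeakS_1}, setting $r=0$ restricts both sides to derangements and raises the exponent of the prefactor from $n-1$ to $n$. Taking $(p,q,tq,r)=(q,1,t,0)$ yields \eqref{exc3qpk2} for $\stat=\nest$ directly. Taking $(p,q,tq,r)=(q^2,q,tq,0)$ and using \eqref{INV} to rewrite $q^{2\nest\sigma+\cros\sigma}$ as $q^{\inv\sigma-\exc\sigma}$ yields \eqref{exc3qpk2} for $\stat=\inv$. The equivalent form \eqref{depk2} follows by the same functional inversion $x\leftrightarrow u$, $t\leftrightarrow v$ used throughout Section~3.'' The main obstacle is not conceptual but notational: one must verify that the parameter slot called ``$tq$'' in Theorem~\ref{thm:cpeakS_1} really does carry the combined weight $q^{\exc}t^{\exc}$ and not, say, $t^{\exc}$ alone, so that the identity \eqref{INV} is applied to the right exponent and the final polynomial genuinely has $q^{\inv-\exc}t^{\exc}$ and not an off-by-$q^{\exc}$ discrepancy; I would double-check this against the $q=1$ and $x=1$ degenerations, where \eqref{exc3qpk2} must reduce to the known derangement-polynomial $\gamma$-formula of Br\"and\'en and Shin--Zeng, and against \eqref{INV} itself, to be sure the substitution is the intended one.
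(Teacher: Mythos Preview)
Your approach is correct and is exactly the paper's own: the introductory sentence to Corollary~\ref{3.6} literally reads ``Taking $(p, q, tq, r)=(q, 1,t, 0)$ (resp.\ $(p, q, tq, r)=(q^2, q, tq, 0)$) in Theorem~\ref{thm:cpeakS_1} and by \eqref{INV}, we obtain the following corollary,'' which is the substitution you identified. One small slip: the prefactor in Theorem~\ref{thm:cpeakS_1} already carries the exponent $n$, not $n-1$, so setting $r=0$ does not ``raise'' the exponent --- it simply kills the fixed-point contribution (via the argument $\frac{(1+x)r}{1+xt}\to 0$ on the right) and leaves the exponent unchanged; your parenthetical explanation comparing with \eqref{Aqexcpk1} is therefore unnecessary and slightly misleading.
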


 By \eqref{INV} and Lemma \ref{321:nest}, the $r=0$ case of  \eqref{321cpeakS_1}
yields the following result in parallel with Corollary~\ref{3.6}, which generalizes 
Lin's identity~ \cite[Theorem $1.4$]{Lin17}.
\begin{cor}
For $n\geq 1$,
\begin{align}\label{eq:SZdn321}
&P^{(\inv, \exc)}(\D_n(321); q, t)\\
&=\left(\frac{1+xt}{1+x}\right)^{n}P^{(\inv, \cpeak,\exc)}\left(\D_n(321); q, \frac{(1+x)^{2}t}{(x+t)(1+xt)},\frac{x+t}{1+xt}\right),\nonumber
\end{align}
equivalently, 
\begin{equation}
P^{(\inv, \cpeak,\exc)}(\D_n(321); q, x,t)=\left(\frac{1+u}{1+uv}\right)^{n}P^{(\inv, \exc)}(\D_n(321); q, v),\label{depk2-321}
\end{equation}
where $u=\frac{1+t^{2}-2xt-(1-t)\sqrt{(1+t)^{2}-4xt}}{2(1-x)t}$ and
$v=\frac{(1+t)^{2}-2xt-(1+t)\sqrt{(1+t)^{2}-4xt}}{2xt}.$
\end{cor}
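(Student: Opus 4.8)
The plan is to derive \eqref{eq:SZdn321} as a specialization of the identity \eqref{321cpeakS_1}, in exact parallel with the passage from Theorem~\ref{thm:cpeakS_1} to Corollary~\ref{3.6}. First I would put $r=0$ in \eqref{321cpeakS_1}; since $r$ records fixed points, this restricts the underlying permutations to the $321$-avoiding derangements $\D_n(321)\subseteq\S_n(321)$ and raises the exponent of the prefactor from $n-1$ to $n$. The outcome is an identity relating a generating polynomial of $\D_n(321)$ by (among other statistics) $\cros$, $\cpeak$ and $\exc$ to itself under the substitution $t\mapsto\tfrac{x+t}{1+xt}$, with the $\cpeak$-variable further replaced by $\tfrac{(1+x)^2t}{(x+t)(1+xt)}$ and the usual prefactor $\bigl(\tfrac{1+xt}{1+x}\bigr)^n$.

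Next I would trade $\cros$ for $\inv$. By Lemma~\ref{321:nest}(i) every $\sigma\in\S_n(321)$ --- in particular every $\sigma\in\D_n(321)$ --- has $\nest\,\sigma=0$ \emph{termwise}, so the inversion decomposition \eqref{INV}, $\inv=\exc+2\nest+\cros$, reduces on $\D_n(321)$ to $\inv\,\sigma=\exc\,\sigma+\cros\,\sigma$. Hence $q^{\inv\,\sigma}t^{\exc\,\sigma}=q^{\cros\,\sigma}(qt)^{\exc\,\sigma}$ for each such $\sigma$, and the same rewriting applies verbatim to the right-hand polynomial. Therefore \eqref{eq:SZdn321} is precisely the $r=0$ case of \eqref{321cpeakS_1} in which the $\cros$-tracking variable is set to $q$ and a further factor $q$ is absorbed into the $\exc$-tracking variable --- there is no $\nest$-contribution to reconcile, because $\nest\equiv0$ on $\D_n(321)$, which is exactly what makes the clean identity $\inv=\exc+\cros$ available. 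The equivalent form \eqref{depk2-321} then follows by inverting the change of variables $(x,t)\leftrightarrow(u,v)$, the same elementary computation already carried out in the proof of Theorem~\ref{pqpk} and reused in all of its corollaries, so nothing new is needed there.

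There is no genuine obstacle beyond \eqref{321cpeakS_1} itself; within the present argument the only things to keep straight are that putting $r=0$ in \eqref{321cpeakS_1} genuinely lands in $\D_n(321)$ and not merely in $\D_n$ --- which is why the pattern-avoidance has to be built into \eqref{321cpeakS_1} from the outset, through Lemma~\ref{321:nest}(ii) and the pattern-preserving bijections of Figure~\ref{pattern-digram} (Theorems~\ref{thm2.7}, \ref{thm2.8}, \ref{thm2.9}) --- and that $\inv=\exc+\cros$ is used as a pointwise identity on $\D_n(321)$, not only at the level of generating functions, which is precisely what Lemma~\ref{321:nest}(i) supplies. As a sanity check, $x=1$ collapses the prefactor to $\bigl(\tfrac{1+t}{2}\bigr)^n$, the $\exc$-variable to $1$, and the $\cpeak$-variable to $\tfrac{4t}{(1+t)^2}$, recovering Lin's identity~\cite[Theorem~1.4]{Lin17}.
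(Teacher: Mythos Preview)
Your approach is correct and coincides with the paper's: the text immediately preceding the corollary states that it is obtained from the $r=0$ case of \eqref{321cpeakS_1} together with \eqref{INV} and Lemma~\ref{321:nest}, which is exactly your derivation.

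One small slip to fix: the prefactor in \eqref{321cpeakS_1} already carries the exponent $n$, not $n-1$, so setting $r=0$ does not ``raise'' it; the exponent $n$ is inherited directly. (You may be thinking of Theorem~\ref{pqpk}, where the exponent is $n-1$.) Also, the derivation of \eqref{321cpeakS_1} from Theorem~\ref{thm:cpeakS_1} uses only Lemma~\ref{321:nest}(i) (set $p=0$ to kill $\nest$, then $\nest=0$ characterizes $\S_n(321)$); the bijections of Theorems~\ref{thm2.7}--\ref{thm2.9} are not needed here.
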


Moreover, we have the  the following formula. 
\begin{thm} \label{thm:SZ3q}
For all positive integers $n$, 
%
\begin{equation}\label{exc3qpk1}
D_{n}^{(\cyc, \exc)}(q, t)=\left(\frac{1+xt}{1+x}\right)^{n}P^{(\cyc, \cpeak,\exc)}\left(\D_n; q, \frac{(1+x)^{2}t}{(x+t)(1+xt)},\frac{x+t}{1+xt}\right),
\end{equation}
equivalently, 
\begin{equation}
P^{(\cyc, \cpeak,\exc)}(\D_n; q, x,t)=\left(\frac{1+u}{1+uv}\right)^{n}D_{n}^{(\cyc, \exc)}(q, v),\label{depk5}
\end{equation}
where $u=\frac{1+t^{2}-2xt-(1-t)\sqrt{(1+t)^{2}-4xt}}{2(1-x)t}$ and
$v=\frac{(1+t)^{2}-2xt-(1+t)\sqrt{(1+t)^{2}-4xt}}{2xt}.$
\end{thm}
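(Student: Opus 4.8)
\emph{Proof proposal.} It suffices to prove the equivalent form \eqref{depk5}; the form \eqref{exc3qpk1} then results by inverting the (birational) substitution $x,t\mapsto u,v$. The plan is to use a \emph{cyclic valley--hopping action} on $\D_n$ --- the cyclic analogue of Br\"and\'en's modified Foata--Strehl action \cite{Bra08} --- to reduce \eqref{depk5} to two elementary algebraic identities between $u,v,x,t$; an independent proof can also be given by the continued fraction method of Section~5. For $\sigma\in\D_n$ and a value $x\in[n]$ that is \emph{not} a cyclic peak or a cyclic valley of $\sigma$ (equivalently, $x$ is a cyclic double excedance or a cyclic double drop), I would define an involution $\xi_x$ of $\D_n$ which switches the type of $x$ between cyclic double excedance and cyclic double drop, leaves the cyclic type of every \emph{other} value unchanged, and leaves the cycle partition of $\sigma$ unchanged; on the remaining values $\xi_x$ acts as the identity. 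The $\xi_x$ commute pairwise, giving an action of $(\Z/2\Z)^{[n]}$ on $\D_n$ whose orbits I denote $\Orb(\sigma)$.

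By construction $\Cpeak$, $\Cval$ and the cycle partition --- hence $\cpeak$, $\cval$ and $\cyc$ --- are constant on each orbit, and the set of values that are cyclic double excedances or cyclic double drops is also invariant. Fix an orbit $\Orb$ with $\cpeak=j$ (so $\cval=j$ too) and $\cyc=k$. Since $\sigma\in\D_n$ has no fixed points, exactly $n-2j$ values are cyclic double excedances or drops, each toggled independently by its $\xi_x$, so $|\Orb|=2^{\,n-2j}$; moreover $\exc=\cpeak+\cda$, where a cyclic double excedance contributes $1$ and a cyclic double drop $0$ to $\cda$. Hence, grading the $2^{n-2j}$ elements of $\Orb$ by how many of the toggleable values are cyclic double excedances,
\begin{equation*}
\sum_{\sigma\in\Orb}q^{\cyc\,\sigma}x^{\cpeak\,\sigma}t^{\exc\,\sigma}=q^{k}(xt)^{j}(1+t)^{\,n-2j}.
\end{equation*}
Writing $\Gamma_{n,j}(q):=\sum_{\Orb:\,\cpeak(\Orb)=j}q^{\cyc(\Orb)}$ and summing over orbits gives
\begin{equation*}
P^{(\cyc,\cpeak,\exc)}(\D_n;q,x,t)=\sum_{j}\Gamma_{n,j}(q)\,(xt)^{j}(1+t)^{\,n-2j},
\end{equation*}
and setting $x=1$ and renaming $t$ as $v$ yields $D_n^{(\cyc,\exc)}(q,v)=\sum_{j}\Gamma_{n,j}(q)\,v^{j}(1+v)^{\,n-2j}$.

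Comparing the two expansions, \eqref{depk5} is equivalent to the term-by-term identity $(xt)^{j}(1+t)^{n-2j}=\bigl(\tfrac{1+u}{1+uv}\bigr)^{n}v^{j}(1+v)^{n-2j}$ for all $j\ge 0$. Taking $j=0$ and the ratio of consecutive $j$'s, this splits into
\begin{equation*}
1+t=\frac{(1+u)(1+v)}{1+uv}\qquad\text{and}\qquad \frac{xt}{(1+t)^{2}}=\frac{v}{(1+v)^{2}}.
\end{equation*}
The second is the quadratic $xt\,v^{2}-\bigl((1+t)^{2}-2xt\bigr)v+xt=0$, whose discriminant factors as $\bigl((1+t)^{2}-4xt\bigr)(1+t)^{2}$ and whose smaller root is exactly the stated $v=v(x,t)$; the first is equivalent, after clearing denominators, to $t=\tfrac{u+v}{1+uv}$, i.e. $u=\tfrac{t-v}{1-tv}$, and substituting the formula for $v$ one checks this simplifies to the stated $u=u(x,t)$. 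Both verifications are routine algebra.

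\emph{Main obstacle.} The entire argument hinges on the construction of the cyclic valley--hopping involutions and, above all, on proving that each $\xi_x$ \emph{simultaneously} preserves the cyclic peak/valley sets and the cycle partition, so that $\cpeak$ and $\cyc$ really are orbit invariants and every orbit has size $2^{\,n-2\cpeak}$; this is the part requiring genuine combinatorial work (carried out in Section~4, possibly phrased on the level of Laguerre histories). In the alternative continued fraction proof the corresponding difficulty is deriving, via the Foata--Zeilberger encoding, the explicit $J$-fraction expansions of $\sum_{n}D_n^{(\cyc,\exc)}(q,t)z^n$ and $\sum_{n}P^{(\cyc,\cpeak,\exc)}(\D_n;q,x,t)z^n$ --- with $q$ tracking cycles --- and checking that the quadratic substitution above, together with the prefactor $\bigl(\tfrac{1+xt}{1+x}\bigr)^{n}$ absorbed as a rescaling of $z$, transforms one into the other.
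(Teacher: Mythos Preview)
Your proposal is correct and follows essentially the same approach as the paper's group-action proof in Section~4.1: the involutions $\xi_x$ you posit are precisely the CMFS-action $\tau^c_x=\iota^{-1}\circ\varphi'_x\circ\iota$ of Sun--Wang, built by passing to the standard cycle word $\iota(\sigma)$ and applying the modified Foata--Strehl action there, which makes the invariance of $\cpeak$ (via $\lpeak\,\iota(\sigma)$) and of $\cyc$ (via left-to-right maxima of $\iota(\sigma)$) transparent and yields exactly your orbit sum $q^{k}(xt)^{j}(1+t)^{n-2j}$. The paper then packages the conclusion as \eqref{exc3qpk1} rather than your equivalent \eqref{depk5}, and also gives the independent continued-fraction proof you allude to.
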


\begin{thm} \label{thm:cpeakS_1}
For $n\geq 1$,  
\begin{align}\label{cpeakS_1}
&P^{(\nest, \cros,  \exc, \fix)}(\S_n; p, q, tq, r)\nonumber\\
=&\left(\frac{1+xt}{1+x}\right)^{n}P^{(\nest, \cros,\cpeak, \exc, \fix)}
\left(\S_n; p, q, \frac{(1+x)^{2}t}{(x+t)(1+xt)},
\frac{q(x+t)}{1+xt}, \frac{(1+x)r}{1+xt}\right),
\end{align}
equivalently, 
\begin{align}
&P^{(\nest, \cros, \cpeak, \exc, \fix)}(\S_n; p, q, x, qt, r)\nonumber\\
&=\left(\frac{1+u}{1+uv}\right)^{n}P^{(\nest, \cros,  \exc, \fix)}\left(\S_n; p, q, qv, \frac{(1+uv)r}{1+u}\right),\label{cpeakS_2}
\end{align}
where $u=\frac{1+t^{2}-2xt-(1-t)\sqrt{(1+t)^{2}-4xt}}{2(1-x)t}$ and
$v=\frac{(1+t)^{2}-2xt-(1+t)\sqrt{(1+t)^{2}-4xt}}{2xt}.$
\end{thm}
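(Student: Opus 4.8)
The plan is to reduce the five‑variable identity \eqref{cpeakS_1} to a continued‑fraction comparison in the spirit of Shin and Zeng~\cite{SZ10, SZ12, SZ16}. First I would introduce the multivariate generating function
\[
F(z):=\sum_{n\ge 0}P^{(\nest,\cros,\cpeak,\exc,\fix)}(\S_n; p,q,x,qt,r)\,z^n,
\]
and recall the explicit J‑fraction expansion of this series that follows from the Foata–Zeilberger bijection $\phi_{FZ}$ of Subsection~2.8 together with Flajolet's theory~\cite{Fla80}: each cyclic valley contributes a factor from the $\su$‑step, each cyclic peak a factor from the $\sd$‑step weighted by $x$, each double excedance or fixed point an $\lb$‑step (with $r$ marking the fixed points), each double drop an $\lc$‑step, and the nesting/crossing statistics are recorded by the level‑dependent weights $p$ and $q$ via the factor $p^{\nest_i}q^{\cros_i}$ and the height identity $\nest_i+\cros_i=h_{i-1}$. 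This produces a J‑fraction whose $n$th numerator is of the form $b_nz^2$ with $b_n=[h]_{p,q}[h]_{p,q}\cdot(\text{quadratic in }x,qt)$ and whose $n$th linear coefficient $\lambda_n z$ is linear in $x,qt,r$; I would write these coefficients out explicitly, paralleling the computation behind \eqref{lag1}–\eqref{lag2}.

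Next I would apply a contraction/substitution to the analogous J‑fraction for the right side of \eqref{cpeakS_2}, namely for $\sum_n P^{(\nest,\cros,\exc,\fix)}(\S_n; p,q,qv,(1+uv)r/(1+u))\,z^n$, after rescaling $z\mapsto z(1+u)$ or $z\mapsto z(1+uv)$ as dictated by the prefactor $\left(\frac{1+u}{1+uv}\right)^n$. The key algebraic fact I would isolate first is that the substitution $x=\frac{x+t}{1+xt}$‑type relations in the theorem — equivalently $u,v$ defined as the two roots governed by $(1+t)^2-4xt$ — are precisely the ones that make the transformed $b_n$ and $\lambda_n$ match the untransformed ones up to the uniform power of $(1+xt)/(1+x)$. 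This is the same quadratic transformation that underlies Stembridge's identity \eqref{eq:stembridge} and its refinement in Theorem~\ref{pqpk}; here one checks that the extra parameters $p$ (on $\nest$) and $q$ (on $\cros$) ride along untouched because they only enter through $[h]_{p,q}$ in the numerators, and that the fixed‑point parameter $r$ transforms linearly by $r\mapsto \frac{(1+x)r}{1+xt}$ exactly as the $\lb$/$\lc$‑weights demand.

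An alternative, and perhaps cleaner, route is to invoke the group‑action argument promised in Section~4: a cycle version of the modified Foata–Strehl action on permutations (or equivalently on their Foata–Zeilberger Laguerre histories) that toggles cyclic double excedances against cyclic double drops while preserving $\nest$, $\cros$, $\exc$, $\fix$, and merging orbits so that within each orbit the $\cpeak$‑generating polynomial is $x^{\cpeak_{\min}}(1+x)^{\#\text{free indices}}$ times a monomial. Summing over orbits then yields exactly the $\gamma$‑type expansion that is equivalent to \eqref{cpeakS_1} after the quadratic change of variables $t\mapsto \frac{(1+x)^2t}{(x+t)(1+xt)}$, $qt\mapsto \frac{q(x+t)}{1+xt}$, $r\mapsto\frac{(1+x)r}{1+xt}$. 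I expect the main obstacle to be bookkeeping: verifying that the action (resp. the continued‑fraction weights) leaves $\nest_i$ and $\cros_i$ individually invariant on the affected indices — one must check that flipping a $\lb$‑step to an $\lc$‑step at level $h$ does not disturb the $p^{\nest_i}q^{\cros_i}$ contributions of neighboring steps — and that fixed points, which sit among the $\lb$‑steps, are correctly segregated so the $r$‑weight transforms linearly rather than getting entangled with $x$. Once that invariance is nailed down, the passage from the $\gamma$‑expansion to the closed form \eqref{cpeakS_1}, and the inversion to \eqref{cpeakS_2} using the explicit $u,v$, is routine algebra identical to the proof of Theorem~\ref{pqpk}.
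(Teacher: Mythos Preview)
Your proposal correctly identifies both strategies the paper actually uses: a continued-fraction comparison (Section~5.2, via Lemma~\ref{SZ12Bset}) and a $\Z_2^n$-action on Laguerre histories (Section~4.2). The continued-fraction sketch is accurate in outline and matches the paper's argument; the substitution you describe is exactly the one carried out there.

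Your description of the group action, however, contains a genuine error. You say the action ``toggles cyclic double excedances against cyclic double drops while preserving $\nest$, $\cros$, $\exc$, $\fix$,'' and that within each orbit the varying statistic is $\cpeak$. This is backwards on two counts. First, swapping a cyclic double excedance for a cyclic double drop changes $\exc$ by one (a double excedance is an excedance, a double drop is not), so $\exc$ is \emph{not} preserved. Second, $\cpeak$ (equivalently $\cval$, i.e.\ the $\su$-steps) is untouched by the $\lb\leftrightarrow\lr$ swap and is therefore constant on each orbit; what varies along an orbit is $\exc$, and the orbit generating function is in the variable $t$, not $x$. What the action does preserve is $\nest$ (via $\mathbf{p}$), $\fix$ (via the segregated $\ly$-steps), $\cpeak$, and the \emph{sum} $\exc+\cros$ (via the height sequence, cf.\ \eqref{grouplag4}) --- but not $\cros$ alone. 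This is exactly why the left side of \eqref{cpeakS_1} is stated with $tq$ in the $\exc$ slot: the weight $p^{\nest}q^{\cros}(tq)^{\exc}r^{\fix}=p^{\nest}q^{\cros+\exc}t^{\exc}r^{\fix}$ separates the orbit-invariant factor $p^{\nest}q^{\cros+\exc}r^{\fix}$ from the orbit-varying factor $t^{\exc}$. So the ``main obstacle'' you anticipate --- checking that $\cros_i$ is individually invariant --- will in fact fail; the argument is rescued because only $\cros+\exc$ needs to be invariant, and that is immediate from the heights.
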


\begin{remark}
   Cooper et al.~\cite[Theorem~11]{CJZ19} have recently proved
  the $p=q=1$ case of \eqref{cpeakS_1} by applying  Sun and 
Wang's CMFS action~\cite{SW14}, see \eqref{def:cmfs}.
\end{remark}

Applying   Lemma \ref{321:nest} and  Theorem \ref{thm:cpeakS_1} with $p=0$,
we obtain the following result.
\begin{cor} \label{thm:321cpeakS_1}
	For $n\geq 1$,  
	\begin{align}\label{321cpeakS_1}
	&P^{(\cros, \exc, \fix)}(\S_n(321); q, tq, r)\nonumber\\
	=&\left(\frac{1+xt}{1+x}\right)^{n}P^{(\cros, \cpeak, \exc, \fix)}\left(\S_n(321); q, \frac{(1+x)^{2}t}{(x+t)(1+xt)},\frac{q(x+t)}{1+xt}, \frac{(1+x)r}{1+xt}\right),
	\end{align}
	equivalently, 
	\begin{align}
	&P^{(\cros, \cpeak, \exc, \fix)}(\S_n(321); q, x, qt, r)\nonumber\\
	=&\left(\frac{1+u}{1+uv}\right)^{n}P^{(\cros, \exc, \fix)}\left(\S_n(321); q, qv, \frac{(1+uv)r}{1+u}\right),
	\end{align}
	where $u=\frac{1+t^{2}-2xt-(1-t)\sqrt{(1+t)^{2}-4xt}}{2(1-x)t}$ and
	$v=\frac{(1+t)^{2}-2xt-(1+t)\sqrt{(1+t)^{2}-4xt}}{2xt}.$
\end{cor}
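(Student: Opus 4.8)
The plan is to deduce Corollary~\ref{thm:321cpeakS_1} from Theorem~\ref{thm:cpeakS_1} by specializing the variable that controls the nesting statistic. First I would recall from Lemma~\ref{321:nest}(i) that $\sigma\in\S_n(321)$ if and only if $\nest\,\sigma=0$, so that in the generating polynomial $P^{(\nest,\cros,\cpeak,\exc,\fix)}(\S_n;p,q,\ldots)$ the coefficient of $p^0$ is exactly the sum over $321$-avoiding permutations, while every term with a positive power of $p$ involves a permutation containing a $321$ pattern. Hence setting $p=0$ in any identity among these polynomials simply restricts the underlying summation from $\S_n$ to $\S_n(321)$ and erases the (now trivial) $\nest$-variable.

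The second step is the actual substitution $p=0$ in both \eqref{cpeakS_1} and \eqref{cpeakS_2}. On the left-hand side of \eqref{cpeakS_1} we have $P^{(\nest,\cros,\exc,\fix)}(\S_n;0,q,tq,r)=P^{(\cros,\exc,\fix)}(\S_n(321);q,tq,r)$, and on the right-hand side $P^{(\nest,\cros,\cpeak,\exc,\fix)}(\S_n;0,q,\cdot,\cdot,\cdot)=P^{(\cros,\cpeak,\exc,\fix)}(\S_n(321);q,\cdot,\cdot,\cdot)$; the prefactor $\left(\frac{1+xt}{1+x}\right)^{n}$ and the three transformed arguments $\frac{(1+x)^{2}t}{(x+t)(1+xt)}$, $\frac{q(x+t)}{1+xt}$, $\frac{(1+x)r}{1+xt}$ are untouched because they do not involve $p$. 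This yields \eqref{321cpeakS_1} verbatim. The equivalent form is obtained the same way: put $p=0$ in \eqref{cpeakS_2}, using that $u$ and $v$ depend only on $x$ and $t$, not on $p$, so the change of variables $t\mapsto v$ (together with the rescalings of the $\exc$- and $\fix$-variables) is preserved under the specialization.

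Since the equivalence of the two displayed forms in the corollary is literally the $p=0$ shadow of the equivalence already asserted in Theorem~\ref{thm:cpeakS_1}, there is essentially nothing further to check: the two statements are equivalent by the same algebraic manipulation (solving the quadratic relating $x$ to $v$) that establishes the equivalence of \eqref{cpeakS_1} and \eqref{cpeakS_2}. I do not anticipate a genuine obstacle here; the only point requiring a word of care is confirming that $p$ appears in Theorem~\ref{thm:cpeakS_1} only as the formal variable marking $\nest$ and nowhere inside the substituted rational functions or inside $u,v$ — which is visibly the case — so that the specialization is legitimate and produces exactly the claimed identities.
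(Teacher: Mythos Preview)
Your proposal is correct and matches the paper's own argument exactly: the paper derives the corollary by setting $p=0$ in Theorem~\ref{thm:cpeakS_1} and invoking Lemma~\ref{321:nest}(i) to identify the $\nest=0$ contribution with the sum over $\S_n(321)$. Your additional remarks about $p$ not appearing in the substituted rational functions or in $u,v$ are accurate and make the specialization transparent.
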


Consider  the generalized $q$-Narayana polynomials $N_n(t,q, r)$ defined by
\begin{align}\label{def:N}
N_n(t, q, r):=\sum\limits_{\sigma \in \S_n(321)}t^{\exc\,\sigma} q^{\inv\,\sigma} r^{\fix\,\sigma}.
\end{align}
In particular, we have
\begin{align}
N_n(t/q, q, 1)&=\sum_{\sigma\in\S_n(321)}t^{\exc\,\sigma}
q^{\inv\,\sigma-\exc\,\sigma},\label{N1}\\
N_n(t, q, t)&=\sum_{\sigma\in\S_n(321)}t^{\wex\,\sigma}q^{\inv\,\sigma}.\label{N2}
\end{align}
Fu et al. \cite{FTHZ19} gave more interpretations of $N_{n}(t/q, q, 1)$ and $N_{n}(t, q, t)$ in terms of $n$-permutation  patterns.
We further  prove the following interpretations by using the $(n-1)$-permutation patterns.
\begin{thm}\label{qnara}
	For $n\geq 1$, the following identities hold
	\begin{align}
	N_{n}(t/q, q, 1)&=\sum_{\sigma\in\S_{n-1}(\tau)}t^{\stat_1\:\sigma}q^{\stat_2\:\sigma}(1+t)^{\stat_3\:\sigma},\label{N1new}\\
	N_{n}(t, q, t)&=t^n\sum_{\sigma\in\S_{n-1}(\tau)}(q/t)^{\stat_1\:\sigma}q^{\stat_2\:\sigma}(1+q/t)^{\stat_3\:\sigma},\label{N2new}
    \end{align}
    where five choices for the quadruples $(\tau,\stat_1,\stat_2,\stat_3)$ are listed in Table~\ref{five}.
\end{thm}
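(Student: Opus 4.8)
The plan is to establish \eqref{N1new} directly and to deduce \eqref{N2new} from it by an inverse symmetry of $\S_n(321)$, reducing in both cases to a single row of Table~\ref{five} which is then treated by continued fractions. For the reduction of \eqref{N2new}: by \eqref{N2} we have $N_n(t,q,t)=\sum_{\sigma\in\S_n(321)}q^{\inv\,\sigma}t^{\wex\,\sigma}$, and since $\S_n(321)$ is stable under $\sigma\mapsto\sigma^{-1}$ and $\inv\,\sigma^{-1}=\inv\,\sigma$, $\fix\,\sigma^{-1}=\fix\,\sigma$, $\exc\,\sigma^{-1}=\drop\,\sigma=n-\wex\,\sigma$, reindexing by $\sigma\mapsto\sigma^{-1}$ and using $\wex\,\sigma^{-1}=\exc\,\sigma^{-1}+\fix\,\sigma^{-1}$ gives $N_n(t,q,t)=t^{n}\sum_{\sigma\in\S_n(321)}q^{\inv\,\sigma}(1/t)^{\exc\,\sigma}=t^{n}N_n(1/t,q,1)$. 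Hence \eqref{N2new} follows from \eqref{N1new} upon replacing $t$ by $q/t$ (so that the argument $t/q$ becomes $1/t$) and multiplying by $t^{n}$, and it remains to prove \eqref{N1new}.

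\emph{Reduction to one pattern.} The five right-hand sides of \eqref{N1new} are pairwise equal. Using Lemma~\ref{general-231-321}, the bijections $\widetilde{\Phi}\colon\S_m(231)\to\S_m(321)$ of Theorem~\ref{thm2.8} and $\widetilde{\Psi}\colon\S_m(213)\to\S_m(321)$ of Theorem~\ref{thm2.9} with the statistic transfers \eqref{Paeq:key} and \eqref{starCSZ97}, together with the reverse/complement symmetries \eqref{patternres1}--\eqref{patternres4} (these being the commuting diagrams of Figure~\ref{pattern-digram}), one checks that the five quadruples of Table~\ref{five} produce one and the same polynomial. It therefore suffices to establish \eqref{N1new} for one convenient pattern, say $\tau=321$.

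\emph{The base case.} I would pass to generating functions. On the left, Lemma~\ref{321:nest}(i) gives $\nest\,\sigma=0$ for $\sigma\in\S_n(321)$, so \eqref{INV} yields $N_n(t/q,q,1)=\sum_{\sigma\in\S_n(321)}q^{\cros\,\sigma}t^{\exc\,\sigma}$; combining the restricted Foata--Zeilberger encoding $\widetilde{\phi}_{FZ}\colon\S_n(321)\to\2MP_n^{*}$ of Theorem~\ref{thm2.7}(2), the way $\cros$ and $\exc$ are read off Laguerre histories \cite{SZ10,SZ16,FTHZ19}, and Flajolet's combinatorial theory of continued fractions, one obtains an explicit $q$-deformation of \eqref{2-motzkin-star} for $\sum_{n\ge0}N_n(t/q,q,1)z^{n}$ and, after contraction, an explicit $q$-deformation of \eqref{2-motzkin} for $\sum_{n\ge0}N_{n+1}(t/q,q,1)z^{n}$, whose diagonal coefficients are proportional to $1+t$, whose off-diagonal coefficients are proportional to $t$, and whose $q$-powers are graded by height. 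On the right, applying a restricted Fran\c con--Viennot-type bijection to $\S_{n-1}(321)$ turns the $\stat_3$-positions (the double ascents and double descents) into the diagonal weights $1+t$, the $\stat_1$-positions into the off-diagonal weight $t$, and the vincular statistic $\stat_2$ into exactly the same height-graded $q$-grading; this is again a Jacobi-type continued fraction, necessarily equal to the one above, and extracting the coefficient of $z^{n}$ gives \eqref{N1new} for $\tau=321$.

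\emph{Main obstacle.} The delicate part is the last step: one must pin down the precise cyclic quadruple in Table~\ref{five} and verify that, under the two Laguerre-history encodings, $\cros$ and $\inv$ on $\S_n(321)$ on the one hand and the vincular statistics on $\S_{n-1}(\tau)$ on the other are recorded by the very same height-graded $q$-weights, so that the two continued fractions coincide term by term; and one must check that $\widetilde{\Phi}$, $\widetilde{\Psi}$ and the reverse/complement maps genuinely carry each row of Table~\ref{five} to the next, which rests on careful bookkeeping with Lemma~\ref{97csz} and Corollary~\ref{SZbijection}.
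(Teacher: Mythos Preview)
Your overall architecture---reduce \eqref{N2new} to \eqref{N1new} via $\sigma\mapsto\sigma^{-1}$ on $\S_n(321)$, reduce the five rows of Table~\ref{five} to one via the bijections $\widetilde\Phi$, $\widetilde\Psi$ and the symmetries \eqref{patternres1}--\eqref{patternres4}, and settle the remaining case by continued fractions---is exactly the paper's strategy. The reduction of \eqref{N2new} to \eqref{N1new} you wrote is correct and coincides with the paper's \eqref{key2}.

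Where you diverge is in the execution of the base case, and here the paper is both simpler and cleaner than what you outline. For $\tau=321$ the right-hand side of \eqref{N1new} is, \emph{by the definition \eqref{def:N}}, nothing but $N_{n-1}(t,q,1+t)$. So the whole base case is the single identity
\[
N_n(t/q,q,1)=N_{n-1}(t,q,1+t),
\]
and the paper proves it in one stroke: take the known continued fraction \eqref{general-CEKS} for $\sum_{n\ge 0}N_n(t,q,r)z^n$, set $r=1+t$, multiply by $z$, and apply the contraction formula (Lemma~\ref{contra-formula}) to recognize the result as $\sum_{n\ge 0}N_n(t/q,q,1)z^n$. No new bijective encoding of either side is required. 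Your plan of building two separate Laguerre-history continued fractions from scratch and matching them term by term would eventually rediscover \eqref{general-CEKS} and this same contraction, but with considerably more bookkeeping.

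Your description of that bookkeeping also contains slips that would trip you up if you tried to carry it out. For $\tau=321$ the triple is $(\stat_1,\stat_2,\stat_3)=(\exc,\inv,\fix)$, so the $\stat_3$-positions are \emph{fixed points}, not ``double ascents and double descents'' as you write; and the bijection appropriate to cyclic statistics on $\S_m(321)$ is the restricted Foata--Zeilberger map $\widetilde\phi_{FZ}$ of Theorem~\ref{thm2.7}(2), not a Fran\c con--Viennot-type map. These are precisely the ``delicate'' verifications you flag in your Main obstacle paragraph; recognizing the right-hand side as $N_{n-1}(t,q,1+t)$ and citing \eqref{general-CEKS} makes them disappear.
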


\begin{table}[tbp]\caption{Five choices of $(\tau,\stat_1,\stat_2,\stat_3)$}\label{five}
		\centering
		\begin{tabular}{|c|c|c|c|c|c|}
			\hline
			\# & $\tau$ & $\stat_1$  & $\stat_2$ & $\stat_3$\\
			\hline
			1 & $321$ & $\exc$ & $\inv$ & $\fix$\\
			2 & $231$ & $\des$ & $\des+\cab$ & $\fmax$\\
			3 & $132$ & $\asc$ & $\asc+\bac$ & $\amax$\\
			4 & $312$ & $\des$ & $\des+\bca$ & $\amin$\\
			5 & $213$ & $\asc$ & $\asc+\acb$ & $\fmin$\\
			\hline
		\end{tabular}
	\end{table}

%
%

For $0\leq k\leq n$, define the sets
\begin{subequations}
\begin{align}\label{sndedef} 
\widetilde{\S}_{n,k}(321) &=\{\sigma\in\S_{n}(321):\exc\,\sigma=k,~\cda\,\sigma=0\},\\
\widetilde{\S}_{n,k}(213)&=\{\sigma\in \S_{n}(213): \asc\:\sigma=k, ~\rda\:\sigma=0  \},\\
\widetilde{\S}_{n,k}(312)&=\{\sigma\in \S_{n}(312): \des\, \sigma=k, ~\ldd\:\sigma=0  \},\\
\widetilde{\S}_{n,k}(132)&=\{\sigma\in \S_{n}(132): \asc\:\sigma=k, ~\rda\:\sigma=0 \},\\
\widetilde{\S}_{n,k}(231)&=\{\sigma\in \S_{n}(231): \des\:\sigma=k, ~\ldd\:\sigma=0 \},
\end{align}
\end{subequations}
and $\widetilde{\S}_{n}(\tau) =\cup_{k=0}^{n} \widetilde{\S}_{n,k}(\tau)$ for $\tau\in \S_3$.

 
\begin{thm}\label{thm:general-qgamma}
For  $n\geq1$,  the following  
$q$-analogue of \eqref{nara-gamma} holds
\begin{subequations}
\begin{align}\label{general-qgamma}
N_n(t/q, q, 1)=
\sum_{k=0}^{\lfloor \frac{n-1}{2}\rfloor}
\gamma_{n-1, k}(q)t^k(1+t)^{n-1-2k},
\end{align}
where 
\begin{align}
\gamma_{n-1, k}(q)&=\sum_{\pi\in\widetilde{\S}_{n-1,k}(321)}q^{\inv\,\pi}\label{eq:inv321}\\
&=\sum_{\pi\in\widetilde{\S}_{n-1,k}(231)}q^{(\cab)\:\pi+\des\,\pi}
=\sum_{\pi\in\widetilde{\S}_{n-1,k}(312)}q^{(\bca)\:\pi+\des\,\pi}\label{eq:312:231}\\
&=\sum_{\pi\in\widetilde{\S}_{n-1,k}(132)}q^{(\bac)\:\pi+\asc\,\pi}
=\sum_{\pi\in\widetilde{\S}_{n-1,k}(213)}q^{(\acb)\:\pi+\asc\,\pi}
\label{eq:213:132}.
\end{align}
\end{subequations}
\end{thm}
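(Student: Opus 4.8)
The plan is to establish the $\gamma$-expansion \eqref{general-qgamma} together with its $321$-interpretation \eqref{eq:inv321} first, and then to deduce the remaining interpretations \eqref{eq:312:231}--\eqref{eq:213:132} by transporting the $\gamma$-cores through the bijection $\widetilde{\Phi}$ and the elementary symmetries $\sigma\mapsto\sigma^{r},\sigma^{r\circ c},\sigma^{r\circ c\circ r}$.

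First I would rewrite the left-hand side. Since $\nest\,\pi=0$ for every $\pi\in\S_n(321)$ by Lemma~\ref{321:nest}(i), the identity $\inv=\exc+2\nest+\cros$ of \eqref{INV} gives $\inv\,\pi-\exc\,\pi=\cros\,\pi$ on $321$-avoiding permutations, and hence by \eqref{N1}
\[
N_n(t/q,q,1)=\sum_{\sigma\in\S_n(321)}t^{\exc\,\sigma}q^{\cros\,\sigma}=P^{(\cros,\exc)}\!\bigl(\S_n(321);q,t\bigr).
\]
Next I would feed the bijection $\widetilde{\phi}_{FZ}\colon\S_n(321)\to\2MP_n^{*}$ of Theorem~\ref{thm2.7}(2) and the encoding \eqref{resFZ} into Flajolet's theory: this exhibits $\sum_{n\ge0}N_n(t/q,q,1)z^n$ as the Jacobi-type continued fraction obtained by contracting the $q$-Narayana S-fraction of \cite{FTHZ19} (the $q$-deformation of \eqref{cf:narayana}). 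The $\gamma$-expansion \eqref{general-qgamma} is then precisely the assertion that this Jacobi--Rogers polynomial, re-expanded in the basis $\{t^{k}(1+t)^{n-1-2k}\}$, has non-negative coefficients --- the very mechanism that turns \eqref{FSGP} into the Jacobi--Rogers polynomial of \eqref{lag1} --- and it can equivalently be read off from the $x=1,\ r=1$ specialisation of \eqref{321cpeakS_1} (after replacing $t$ by $t/q$) once the powers of $2$ are cancelled. Either way the coefficient that appears is a $q$-weighted enumerator of Motzkin paths of length $n-1$ with $k$ up-steps; pulling these paths back through $\widetilde{\phi}_{FZ}$ once more --- the condition $\cda\,\pi=0$ forcing every level step above height $0$ to be red, so that, together with the $*$-condition, the colour of every level step is determined by its height --- identifies them with $\{\pi\in\S_{n-1}(321)\colon\exc\,\pi=k,\ \cda\,\pi=0\}=\widetilde{\S}_{n-1,k}(321)$, the weight being $q^{\inv\,\pi}$ (using $\inv\,\pi=\exc\,\pi+\cros\,\pi$ once more). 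This proves \eqref{general-qgamma} with \eqref{eq:inv321}.

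For the other four interpretations I would work at the level of the $\gamma$-cores. By Theorem~\ref{thm2.8} the map $\widetilde{\Phi}$ restricts to a bijection from $\widetilde{\S}_{n-1,k}(231)$ onto $\{\tau\in\S_{n-1}(321)\colon\drop\,\tau=k,\ \cdd\,\tau=0\}$ sending $\des\mapsto\drop$, $\ldd\mapsto\cdd$, and $(\cab)\mapsto\icr$; since applying \eqref{INV} to $\sigma^{-1}$ and using \eqref{nest=nest-1} yields $\inv=\drop+\icr$ on $321$-avoiding permutations, this turns $q^{(\cab)\pi+\des\,\pi}$ into $q^{\inv\,\tau}$. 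The reverse--complement involution $\tau\mapsto\tau^{r\circ c}$ then carries $\{\tau\in\S_{n-1}(321)\colon\drop\,\tau=k,\ \cdd\,\tau=0\}$ bijectively onto $\widetilde{\S}_{n-1,k}(321)$ (it interchanges $\exc\leftrightarrow\drop$ and $\cda\leftrightarrow\cdd$ and fixes $\inv$), giving the first equality of \eqref{eq:312:231}. The $(312)$, $(132)$, $(213)$ cases then follow from the $(231)$ case by applying $\sigma\mapsto\sigma^{r},\sigma^{r\circ c},\sigma^{r\circ c\circ r}$ and invoking the statistic identities \eqref{patternres1}--\eqref{patternres4} together with Lemma~\ref{general-231-321} (the patterns $231,132,312,213$ forming a single orbit under the reverse/complement group); alternatively the $(213)$ case is reached directly from $(321)$ through the bijection $\widetilde{\Psi}$ of Theorem~\ref{thm2.9}.

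The main obstacle I anticipate is the length shift $n\rightsquigarrow n-1$ in the first part: one must make fully explicit the contraction correspondence between the cyclic model on $\S_n(321)$ (equivalently $\2MP_n^{*}$, length $n$) and the excedance model on $\S_{n-1}(321)$ (equivalently $\2MP_{n-1}$, length $n-1$) --- the classical $\2MP_n^{*}\!\leftrightarrow\2MP_{n-1}$ contraction, which here is realised through $\widetilde{\psi}_{YZL}$ and $\widetilde{\phi}_{FZ}$ --- and verify that, after inflating each level step of the shorter path by a factor $1+t$, it carries $q^{\cros}t^{\exc}$ to $q^{\inv}t^{\exc}(1+t)^{n-1-2\exc}$ with all powers of $2$ cancelling. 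A secondary, more routine, technical point is the colour/height bookkeeping in $\widetilde{\phi}_{FZ}$ needed to pin down the partial coefficients of the contracted $q$-Narayana continued fraction and the palindromicity in $t$ of $N_n(t/q,q,1)$ that makes the $\gamma$-expansion well posed; these follow from the apparatus of \cite{FTHZ19,SZ12} but have to be carried out with care.
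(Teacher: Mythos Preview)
Your overall strategy is sound, and for the alternate interpretations \eqref{eq:312:231}--\eqref{eq:213:132} your argument via $\widetilde{\Phi}$ and the reverse/complement symmetries is actually more explicit than the paper's one-line appeal to \eqref{eq:key} and \eqref{patternres1}--\eqref{patternres4}.

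Where you diverge from the paper is precisely at the point you flag as the ``main obstacle'': the length shift $n\rightsquigarrow n-1$. You do not resolve it. In particular, your claim that the $\gamma$-expansion can be ``read off from the $x=1$, $r=1$ specialisation of \eqref{321cpeakS_1}'' does not deliver what you need: that specialisation produces a $\gamma$-type expansion indexed by permutations in $\S_n(321)$ (tracking $\cpeak$ and $\fix$), not in $\S_{n-1}(321)$, so the shift is still missing. Likewise, the ``classical $\2MP_n^{*}\leftrightarrow\2MP_{n-1}$ contraction'' you invoke is not an off-the-shelf weight-preserving bijection; it is the algebraic contraction of Lemma~\ref{contra-formula}, and you would still have to check that it transports $q^{\cros}t^{\exc}$ on the length-$n$ side to $q^{\inv}t^{\exc}(1+t)^{n-1-2\exc}$ on the length-$(n-1)$ side.

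The paper sidesteps all of this by working directly with generating functions. It first records, via Lemma~\ref{SZ12Bset} with $p=0$ and $v=0$, the $J$-fraction
\[
\sum_{n\ge 0}\Bigl(\sum_{\sigma\in\widetilde{\S}_n(321)}q^{\inv\,\sigma}t^{\exc\,\sigma}\Bigr)z^n,
\]
then forms the generating function of the right-hand side of \eqref{general-qgamma} by substituting $z\to(1+t)z$, $t\to t/(1+t)^2$ and multiplying by an extra factor of $z$ (this factor \emph{is} the length shift); finally one application of the contraction formula (Lemma~\ref{contra-formula}) turns the result into the known $J$-fraction \eqref{general-CEKS} for $\sum_{n\ge 0}N_n(t/q,q,1)z^n$. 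No bijection between $\2MP_n^{*}$ and $\2MP_{n-1}$ is needed. An equivalent and perhaps more transparent way to close your gap is to first prove the identity $N_n(t/q,q,1)=N_{n-1}(t,q,1+t)$ of Theorem~\ref{qnara} (itself a one-line contraction) and then specialise \eqref{linfixversion} at $r=1+t$ with $n$ replaced by $n-1$.
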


\begin{thm}\label{qwexth}
For $n\geq 1$,  the following  
$q$-analogue of \eqref{nara-gamma} holds
\begin{equation}\label{narawex}
N_n(t, q, t)=
\sum_{k=1}^{\lfloor\frac{n+1}{2}\rfloor}\widetilde\gamma_{n-1, k-1}(q)t^k(1+t/q)^{n+1-2k},
\end{equation}
where
\begin{align}
\widetilde{\gamma}_{n-1, k-1}(q)
&=\sum_{\pi\in\widetilde{\S}_{n-1,k}(321)}q^{n-1+\inv\,\pi-\exc\,\pi}\\
&=\sum_{\pi\in\widetilde{\S}_{n-1,k-1}(231)}q^{n-1+(\cab)\:\pi}
=\sum_{\pi\in\widetilde{\S}_{n-1,k-1}(312)}q^{n-1+(\bca)\:\pi}\label{eq:wex312:231}\\
&=\sum_{\pi\in\widetilde{\S}_{n-1,k-1}(132)}q^{n-1+(\bac)\:\pi}
=\sum_{\pi\in\widetilde{\S}_{n-1,k-1}(213)}q^{n-1+(\acb)\:\pi}\label{eq:wex213:132}.
\end{align}
\end{thm}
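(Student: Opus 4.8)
The plan is to derive Theorem~\ref{qwexth} from the $\gamma$-expansion of $N_n(t/q,q,1)$ already proved in Theorem~\ref{thm:general-qgamma}, using a reciprocity that links the two specializations $N_n(t,q,t)$ and $N_n(t/q,q,1)$ of the generalized $q$-Narayana polynomial \eqref{def:N}. First I would prove that
\begin{equation}\label{plan:recip}
N_n(t,q,t)=t^{n}\,N_n(1/t,q,1).
\end{equation}
Indeed, by \eqref{N2} we have $N_n(t,q,t)=\sum_{\sigma\in\S_n(321)}t^{\wex\,\sigma}q^{\inv\,\sigma}$, and since the pattern $321$ is an involution the set $\S_n(321)$ is stable under $\sigma\mapsto\sigma^{-1}$. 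This map fixes $\inv$ and $\fix$ and interchanges $\exc$ with $\drop$, so $\wex\,\sigma^{-1}=\drop\,\sigma+\fix\,\sigma=n-\exc\,\sigma$; re-indexing the sum by $\sigma^{-1}$ yields \eqref{plan:recip}. (Equivalently, \eqref{plan:recip} drops out of Theorem~\ref{qnara} by comparing \eqref{N1new} and \eqref{N2new} for $\tau=321$ after the formal substitution $t\mapsto q/t$ in \eqref{N1new}.)

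Next I would substitute. Writing Theorem~\ref{thm:general-qgamma} in the form $N_n(s/q,q,1)=\sum_{k=0}^{\lfloor(n-1)/2\rfloor}\gamma_{n-1,k}(q)\,s^{k}(1+s)^{n-1-2k}$ and setting $s=q/t$ gives $N_n(1/t,q,1)=\sum_{k}\gamma_{n-1,k}(q)(q/t)^{k}(1+q/t)^{n-1-2k}$. Plugging this into \eqref{plan:recip} and using the elementary identity $t^{n}(q/t)^{k}(1+q/t)^{n-1-2k}=q^{\,n-1-k}t^{\,k+1}(1+t/q)^{n-1-2k}$ (extract a factor $1/t$ from each of the $n-1-2k$ terms $1+q/t$, then a factor $q$ from each of the resulting terms $t+q$), one obtains $N_n(t,q,t)=\sum_{k=0}^{\lfloor(n-1)/2\rfloor}\gamma_{n-1,k}(q)\,q^{\,n-1-k}\,t^{\,k+1}(1+t/q)^{n-1-2k}$; re-indexing $k\mapsto k-1$ turns this into \eqref{narawex} with $\widetilde\gamma_{n-1,k-1}(q)=q^{\,n-k}\gamma_{n-1,k-1}(q)$ for $1\le k\le\lfloor(n+1)/2\rfloor$. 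The five interpretations of $\widetilde\gamma_{n-1,k-1}(q)$ then follow from the corresponding ones \eqref{eq:inv321}, \eqref{eq:312:231}, \eqref{eq:213:132} of $\gamma_{n-1,k-1}(q)$: on each set $\widetilde\S_{n-1,k-1}(\tau)$ the relevant excedance-type statistic is constant equal to $k-1$ ($\exc$ if $\tau=321$, $\des$ if $\tau\in\{231,312\}$, $\asc$ if $\tau\in\{132,213\}$), so the prefactor $q^{\,n-k}=q^{(n-1)-(k-1)}$ can be rewritten as $q^{(n-1)-\exc\,\pi}$ (resp. $q^{(n-1)-\des\,\pi}$, $q^{(n-1)-\asc\,\pi}$) and absorbed into the sum, giving \eqref{eq:wex312:231}, \eqref{eq:wex213:132} and the $321$-interpretation $\widetilde\gamma_{n-1,k-1}(q)=\sum_{\pi\in\widetilde\S_{n-1,k-1}(321)}q^{\,n-1+\inv\,\pi-\exc\,\pi}$ (so the index of $\widetilde\S$ in the first line of the displayed chain should read $k-1$, matching $\exc\,\pi=k-1$).

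In this approach the main obstacle is minor: once Theorem~\ref{thm:general-qgamma} is granted, everything reduces to the exponent bookkeeping of the substitution step, the only subtlety being the index shift $k\mapsto k-1$ together with the extra factor $q^{\,n-k}$, both of which are forced by changing the expansion base from $1+t$ to $1+t/q$. If one wanted a self-contained proof avoiding Theorem~\ref{thm:general-qgamma}, the genuinely hard step would instead be to re-run the continued-fraction (or cyclic modified Foata-Strehl) argument directly for the weight $t^{\wex}q^{\inv}$ on $\S_n(321)$: here the $q$-refinement is delicate because the group action underlying the $\gamma$-positivity need not preserve $\inv$, so one would have to track $\inv=\exc+\cros$ on $\S_n(321)$ (via Lemma~\ref{321:nest} and \eqref{INV}) through the $2$-Motzkin path encoding $\widetilde\phi_{FZ}$ of Theorem~\ref{thm2.7}, where $\wex$ and $\cros$ become path statistics that still admit the required $\gamma$-grouping.
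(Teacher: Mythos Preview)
Your argument is correct and takes a genuinely different route from the paper. The paper gives an independent continued-fraction proof: it identifies the g.f.\ of the right-hand side of \eqref{narawex} (using the $321$-interpretation) with $1+zt\sum_{n\ge0}B_n(0,q,t,1,0,1,1)\big|_{\text{subst.}}((q+t)z)^n$, applies Lemma~\ref{SZ12Bset} and the contraction formula of Lemma~\ref{contra-formula}, and matches the result with the known J-fraction \eqref{general-CEKS} for $\sum N_n(t,q,t)z^n$. You instead reduce Theorem~\ref{qwexth} to the already-proved Theorem~\ref{thm:general-qgamma} via the reciprocity $N_n(t,q,t)=t^{n}N_n(1/t,q,1)$ (which the paper in fact also establishes, in the proof of Theorem~\ref{qnara}), followed by an elementary change of variables and reindexing. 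Your approach is shorter and makes transparent that Theorems~\ref{thm:general-qgamma} and \ref{qwexth} are the same identity up to the substitution $t\mapsto q/t$ and a global power of $q$; the paper's approach has the virtue of being self-contained and of displaying the J-fraction behind the $(\wex,\inv)$ enumerator directly.

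You are also right about the index in the first displayed interpretation: the paper's own proof of Theorem~\ref{qwexth} in fact sums over $\widetilde{\S}_{n-1,k-1}(321)$ (with $\exc\,\pi=k-1$), consistent with your derivation $\widetilde\gamma_{n-1,k-1}(q)=q^{\,n-k}\gamma_{n-1,k-1}(q)$, so the ``$k$'' in the statement is a typo for ``$k-1$''.
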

\begin{remark}
Other  interpretations for $\gamma_{n-1, k}(q)$ and 
$\widetilde{\gamma}_{n-1, k-1}(q)$ are  given in 
\cite{LF17, Lin17, FTHZ19}.
\end{remark}

Let $\B_n$ be the set of permutations $\sigma$  of $\{\pm 1,\ldots, \pm n\}$ with $\sigma(-i)=-\sigma(i)$ for every $i\in [n]$.
From  Steingr\'{i}msson~\cite[Definition 3]{Ste94},  we define the \emph{excedance} of $\sigma \in \B_n$ by $i <_{f} \sigma(i)$ for $i\in[n]$,
in the \emph{friends order} $<_f$ of $\{\pm 1,\ldots, \pm n\}$:
$$
1<_{f}-1<_{f}2<_{f}-2<_{f} \cdots <_{f} n <_{f} -n,
$$
and denote the number of excedances of $\sigma $ by $\exc_B(\sigma)$.
Following  Brenti~\cite{Bre94} we  say that $i\in \{0,1,\ldots, n-1\}$ is a \emph{B-descent} of $\sigma$ if
$\sigma(i) > \sigma(i+1)$
in the natural order $<$ of $\{\pm 1,\ldots, \pm n\}$: 
$$
-n<\cdots <-2<-1<1<2<\cdots <n, 
$$
where $\sigma(0)=0$. Denote the number of $B$-descents of $\sigma$ by $\des_B(\sigma)$.  Brenti~\cite[Theorem~3.4]{Bre94}  considered the   Eulerian polynomials  of type B
\begin{align}
B_n(y,t):=\sum_{\sigma\in \B_n}y^{\neg\,\sigma}t^{\des_B\,\sigma}
\end{align}
and proved the following exponential g.f. 
\begin{align}\label{brenti}
\sum_{n\ge 0} {B}_n(y,t) \frac{z^n}{n!}
&=\frac{(1-t) e^{z(1-t)}}{1-te^{z(1-t)(1+y)}}\nonumber\\
&=e^{y (t-1)z}S((1+y)z;t),
\end{align}
where $S(z;t):=\frac{(1-t)e^{z(1-t)}}{1-te^{z(1-t)}}$ is the exponential g.f. of type A Eulerian polynomials $A_n(t)$. 

Our main results for the  polynomials $B_n(y,t)$  are the following two theorems.

\begin{thm}\label{thmBgfexc}
We have 
\begin{equation}\label{Bequidistri}
B_n(y,t)=\sum_{\sigma\in \B_n}y^{\neg\,\sigma}t^{\exc_B\,\sigma}.
\end{equation} 
\end{thm}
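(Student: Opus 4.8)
The plan is to reduce the identity \eqref{Bequidistri} to Brenti's exponential generating function \eqref{brenti} by decomposing a signed permutation into an unsigned permutation together with a sign vector. Write each $\sigma\in\B_n$ uniquely as $\sigma=(\pi,\varepsilon)$, where $\pi=|\sigma|\in\S_n$ is the underlying permutation ($\pi(i)=|\sigma(i)|$) and $\varepsilon\in\{0,1\}^n$ records the signs, $\sigma(i)=(-1)^{\varepsilon_i}\pi(i)$; then $\neg\,\sigma=\#\{i\in[n]:\varepsilon_i=1\}$.

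The first step is to express $\exc_B\,\sigma$ through this decomposition. Working directly with the friends order $1<_f-1<_f2<_f-2<_f\cdots$, one checks by a short case analysis that for $i\in[n]$: if $\pi(i)>i$ then $i$ is a $B$-excedance irrespective of $\varepsilon_i$; if $\pi(i)<i$ then $i$ is never a $B$-excedance; and if $\pi(i)=i$ then $i$ is a $B$-excedance if and only if $\varepsilon_i=1$ (the borderline case $\sigma(i)=-i$, where $i<_f-i$). Hence
\[
\exc_B\,\sigma=\exc\,\pi+\#\{i\in\Fix\,\pi:\varepsilon_i=1\}.
\]
Summing over the $2^n$ sign vectors with $\pi$ held fixed factors over positions --- each of the $n-\fix\,\pi$ non-fixed positions contributes $(1+y)$ and each of the $\fix\,\pi$ fixed positions contributes $(1+yt)$ --- so that
\[
\sum_{\sigma\in\B_n}y^{\neg\,\sigma}t^{\exc_B\,\sigma}
=\sum_{\pi\in\S_n}t^{\exc\,\pi}(1+y)^{n-\fix\,\pi}(1+yt)^{\fix\,\pi}
=(1+y)^n\sum_{\pi\in\S_n}t^{\exc\,\pi}\Bigl(\tfrac{1+yt}{1+y}\Bigr)^{\fix\,\pi}.
\]

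It then remains to match the right-hand side with $B_n(y,t)$, which I would do via exponential generating functions. Decomposing a permutation into its fixed-point set together with a derangement on the complement, and using $\sum_{n}A_n(t)z^n/n!=S(z;t)$ together with $A_n(t)=\sum_{\sigma\in\S_n}t^{\exc\,\sigma}$ from \eqref{des-exc} (whence the derangement polynomials $d_n(t)=\sum_{\pi\in\D_n}t^{\exc\,\pi}$ have EGF $e^{-z}S(z;t)$), one obtains $\sum_n\bigl(\sum_{\pi\in\S_n}t^{\exc\,\pi}s^{\fix\,\pi}\bigr)z^n/n!=e^{(s-1)z}S(z;t)$. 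Substituting $s=(1+yt)/(1+y)$, rescaling $z\mapsto(1+y)z$, and using $(s-1)(1+y)=(1+yt)-(1+y)=y(t-1)$, this gives
\begin{align*}
\sum_{n\ge0}\Bigl((1+y)^n\sum_{\pi\in\S_n}t^{\exc\,\pi}s^{\fix\,\pi}\Bigr)\frac{z^n}{n!}
&=e^{(s-1)(1+y)z}\,S\bigl((1+y)z;t\bigr)\\
&=e^{y(t-1)z}\,S\bigl((1+y)z;t\bigr),
\end{align*}
which is precisely Brenti's expression \eqref{brenti} for $\sum_{n\ge0}B_n(y,t)z^n/n!$. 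Comparing coefficients of $z^n/n!$ then yields \eqref{Bequidistri}.

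The only genuinely delicate point is the friends-order case analysis in the first step --- in particular noticing that $\sigma(i)=-i$ must be counted as a $B$-excedance, which is exactly what makes the sign influence $\exc_B$ only at the fixed points of $|\sigma|$ and thus produces the factor $(1+yt)^{\fix\,\pi}$ rather than $(1+y)^{n}$. Everything after that is routine generating-function bookkeeping resting on the already stated formula \eqref{brenti}.
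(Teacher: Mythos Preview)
Your proof is correct, and it takes a genuinely different route from the paper. The paper proceeds via continued fractions: it specializes the J-fraction \eqref{CF-B} for the multivariate polynomial $F_n(q,t,w,r,y)$ (using the identity $\exc_B=(\wexa-\fix)+\wexc$) to obtain a J-fraction for $\sum_{\sigma\in\B_n}y^{\neg\,\sigma}t^{\exc_B\,\sigma}$, and then separately derives the J-fraction for $\sum_n B_n(y,t)z^n$ by combining Brenti's exponential generating function \eqref{brenti} with the J-fraction \eqref{lag2} via Lemma~\ref{binomialtransform}; the two J-fractions coincide, and the theorem follows.

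Your argument bypasses the continued-fraction machinery entirely. The decomposition $\sigma=(\pi,\varepsilon)$ and the friends-order case analysis give the closed form $\sum_{\sigma\in\B_n}y^{\neg\,\sigma}t^{\exc_B\,\sigma}=(1+y)^n\sum_{\pi\in\S_n}t^{\exc\,\pi}\bigl((1+yt)/(1+y)\bigr)^{\fix\,\pi}$ directly, after which a standard fixed-point/derangement EGF factorization matches with \eqref{brenti}. This is more elementary and self-contained---it needs neither Lemma~\ref{binomialtransform} nor the $F_n$ polynomial from \cite{SZ16}---whereas the paper's proof stays within the continued-fraction framework that is the unifying theme of the article and reuses tools already in place. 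Both approaches ultimately rest on Brenti's formula \eqref{brenti}, but yours reaches it combinatorially while the paper reaches it analytically.
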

\begin{thm}\label{Bnegexc}
For $n\geq 1$, 
\begin{subequations}
\begin{equation}\label{eq:extended-zhuang}
B_n(y, t)=(1+yt)^nP^{(\cpeak, \exc)}\left(\S_n; \frac{(1+y)^2t}{(y+t)(1+yt)}, \frac{y+t}{1+yt}\right),
\end{equation}
equivalently, 
\begin{equation}
P^{(\cpeak,\exc)}\left(\S_n; y,t\right)=\frac{1}{(1+uv)^{n}}B_{n}(u,v),\label{e-lpkdesb2}
\end{equation}
\end{subequations}
where $u=\frac{1+t^{2}-2yt-(1-t)\sqrt{(1+t)^{2}-4yt}}{2(1-y)t}$ and
$v=\frac{(1+t)^{2}-2yt-(1+t)\sqrt{(1+t)^{2}-4yt}}{2yt}.$
\end{thm}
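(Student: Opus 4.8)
The plan is to reduce Theorem~\ref{Bnegexc} to the \emph{derangement} case of a Stembridge-type identity that is already in hand, namely Theorem~\ref{thm:cpeakS_1} specialized at $p=q=1$, $r=0$ (equivalently the $q=1$ instance of Corollary~\ref{3.6}). The only extra inputs needed are Brenti's exponential generating function \eqref{brenti} and the elementary observation that both $\cpeak$ and $\exc$ are unchanged when one deletes the fixed points of a permutation and relabels order-isomorphically.

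First I would record the two fixed-point decompositions. Setting $d_m(t):=\sum_{\sigma\in\D_m}t^{\exc\,\sigma}$ and $D(z;t):=\sum_{m\ge0}d_m(t)z^m/m!$, the classical identity $A_m(t)=\sum_k\binom{m}{k}d_{m-k}(t)$ gives $S(z;t)=e^zD(z;t)$, so Brenti's formula \eqref{brenti} rewrites, using $y(t-1)+(1+y)=1+yt$, as
\begin{align*}
\sum_{n\ge0}B_n(y,t)\frac{z^n}{n!}=e^{y(t-1)z}S((1+y)z;t)=e^{(1+yt)z}\,D((1+y)z;t),
\end{align*}
whence $B_n(y,t)=\sum_k\binom{n}{k}(1+yt)^{n-k}(1+y)^k d_k(t)$. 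In the same spirit, since removing fixed points changes neither $\cpeak$ nor $\exc$, one has $P^{(\cpeak,\exc)}(\S_n;x,t)=\sum_k\binom{n}{k}P^{(\cpeak,\exc)}(\D_k;x,t)$.

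Next I would invoke the derangement identity: Theorem~\ref{thm:cpeakS_1} with $p=q=1$, $r=0$ yields, for all $k\ge0$,
\begin{align*}
d_k(t)=\left(\frac{1+xt}{1+x}\right)^{k}P^{(\cpeak,\exc)}\!\left(\D_k;\ \frac{(1+x)^2t}{(x+t)(1+xt)},\ \frac{x+t}{1+xt}\right).
\end{align*}
Substituting this with $x=y$ into the formula for $B_n(y,t)$, the factors $(1+y)^k$ and $(1+yt)^{-k}$ cancel, leaving $B_n(y,t)=(1+yt)^n\sum_k\binom{n}{k}P^{(\cpeak,\exc)}(\D_k;X,T)=(1+yt)^nP^{(\cpeak,\exc)}(\S_n;X,T)$ with $X=\frac{(1+y)^2t}{(y+t)(1+yt)}$ and $T=\frac{y+t}{1+yt}$, which is \eqref{eq:extended-zhuang}. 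The equivalent form \eqref{e-lpkdesb2} then follows by inverting the quadratic change of variables $x=X$, $t=T$ for $(y,t)$, exactly as for the other theorems in this section, the two branches of the square root producing the stated $u$ and $v$.

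The computations involved are routine; the only points needing care are the bookkeeping in the two fixed-point decompositions and the cancellation $y(t-1)+(1+y)=1+yt$, which is what turns Brenti's egf into one governed by the derangement polynomials and makes the substitution telescope. Had Theorem~\ref{thm:cpeakS_1} not been available, the actual work would be the derangement Stembridge identity for $(\cpeak,\exc)$ — provable by the continued-fraction manipulations of Section~5 or a cyclic modified Foata--Strehl action — but since that result is already established, no genuine obstacle remains here.
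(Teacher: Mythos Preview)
Your proof is correct and takes a genuinely different route from the paper's.

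The paper proves Theorem~\ref{Bnegexc} directly with continued fractions: it specializes Lemma~\ref{SZ12Bset} to obtain a J-fraction for $\sum_n P^{(\cpeak,\exc)}(\S_n;w,t)z^n$, and then checks that the substitution $w\leftarrow\frac{(1+y)^2t}{(y+t)(1+yt)}$, $t\leftarrow\frac{y+t}{1+yt}$, $z\leftarrow(1+yt)z$ reproduces the J-fraction for $\sum_n B_n(y,t)z^n$ already computed in \eqref{Bleft}. You instead reduce the theorem to the derangement identity (Theorem~\ref{thm:cpeakS_1} at $p=q=1$, $r=0$) via two compatible fixed-point decompositions: Brenti's egf~\eqref{brenti} combined with $S(z;t)=e^z D(z;t)$ yields $B_n(y,t)=\sum_k\binom{n}{k}(1+yt)^{n-k}(1+y)^k d_k(t)$, while the invariance of $(\cpeak,\exc)$ under deleting fixed points gives $P^{(\cpeak,\exc)}(\S_n;\cdot,\cdot)=\sum_k\binom{n}{k}P^{(\cpeak,\exc)}(\D_k;\cdot,\cdot)$; the derangement identity then makes the $(1+y)^k$ cancel against $\left(\frac{1+yt}{1+y}\right)^k$ and the result drops out.

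Your argument has the virtue of explaining \emph{why} the type-$B$ formula follows from the type-$A$ derangement case: it is the algebraic shadow of the factorization $e^{y(t-1)z}S((1+y)z;t)=e^{(1+yt)z}D((1+y)z;t)$. The paper's proof is more self-contained (it does not invoke Theorem~\ref{thm:cpeakS_1}) and stays within a single continued-fraction computation, but yours is arguably more conceptual and shorter once the derangement identity is in hand.
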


\section{Proofs using group actions}
In this section, using group actions  we shall prove Theorem~\ref{thm:SZ3q} and Theorem~\ref{thm:cpeakS_1},  respectively, in the following two  subsections. 

\subsection{Proof of Theorem~\ref{thm:SZ3q}}

Let $\sigma\in\S_n$ with convention 0--$\infty$.
 For  any  $x\in[n]$, the {\em$x$-factorization} of $\sigma$ reads $\sigma=w_1 w_2x w_3 w_4,$ where $w_2$ (resp.~$w_3$) is the maximal contiguous subword immediately to the left (resp.~right) of $x$ whose letters are all smaller than $x$.  Following Foata and Strehl~\cite{FS74} we define the action $\varphi_x$ by
$$
\varphi_x(\sigma)=w_1 w_3x w_2 w_4.
$$
Note that if $x$ is a double ascent (resp. double descent), then $w_3=\varnothing$ (resp. $w_2=\varnothing$), and if $x$ is a valley then
$w_2=w_3=\varnothing$.
For instance, if $x=5$ and $\sigma=26471583\in\S_7$, then $w_1=2647,w_2=1,w_3=\varnothing$ and $w_4=83$.
Thus $\varphi_5(\sigma)=26475183$.
Clearly, $\varphi_x$ is an involution acting on $\S_n$ and it is not hard to see that $\varphi_x$ and $\varphi_y$ commute for all $x,y\in[n]$. Br\"and\'en~\cite{Bra08} modified the map $\varphi_x$ to be
\begin{align*}
\varphi'_x(\sigma):=
\begin{cases}
\varphi_x(\sigma)&\text{if $x$ is not a  peak of $\sigma$},\\
\sigma& \text{if $x$ is a peak  of $\sigma$.}
\end{cases}
\end{align*}
It is clear that $\varphi'_x$ is involution and commutes with $\varphi'_y$ for  $x\neq y$. For any subset $S\subseteq[n]$ with $S=\{x_1, \ldots, x_r\}$ we then define the map $\varphi'_S :\S_n\rightarrow\S_n$ by
\begin{align*}
\varphi'_S(\sigma)=\prod_{x\in S}\varphi'_x(\sigma)
\end{align*}
where $\prod_{x\in S}\varphi_x'=\varphi'_{x_1}\circ \cdots \circ \varphi'_{x_r}$.
Hence the group $\Z_2^n$ acts on $\S_n$ via the functions $\varphi'_S$, $S\subseteq[n]$. This action is called  the {\em Modified Foata--Strehl action} ({\em MFS-action} for short). 

Recall that  a permutation $\sigma\in\S_n$ can be    factorized into 
 distinct cycles, say $C_1, C_2, \cdots, C_k$, where 
each  cycle $C$ can be written  as a sequence $C=(a, \sigma(a), \ldots, \sigma^{r-1}((a))$ 
with  $\sigma^{r}(a)=a$ for some $a, r\in [n]$.
We  say that $\stan(\sigma):=C_1C_2\cdots C_k$  is  the  \emph{standard cycle representation} of $\sigma$ if
\begin{itemize}
\item the largest element of each cycle is at the first position,
\item  the cycles are arranged   in increasing order according to their largest elements.
\end{itemize}
We define $\iota(\sigma)$ to be the permutation obtained from $\stan(\sigma)$ by 
erasing the parentheses of cycles. 
For example, for  $\sigma=26471583\in \D_8$, then $\stan(\sigma)=(6512)(8347)$ and 
 $\iota(\sigma)=65128347$. 

In this section, we consider the statistics of $\iota(\sigma)$ with the convention 
0--$\infty$. 

\begin{lem}\label{Cycstat}
	For $\sigma\in \D_n$,  we have  
	\begin{align*}
	\cvalley\,\sigma=&\lvalley\,\iota(\sigma)=\lpeak\,T(\sigma)=\cpeak\,\sigma,\quad
	\lda\,\iota(\sigma)=\exc\,\sigma-\cpeak\,\sigma,\\
	\ldd\,\iota(\sigma)=&n-\cpeak\,\sigma-\exc\,\sigma,\quad
	\lda\,\iota(\sigma)+\ldd\,\iota(\sigma)=n-2\cpeak\,\sigma.
	\end{align*}
\end{lem}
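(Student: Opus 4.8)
\textbf{Proof plan for Lemma~\ref{Cycstat}.}
The plan is to track how the standard cycle representation $\stan(\sigma)$ and the word $\iota(\sigma)$ encode the cyclic structure of a derangement $\sigma\in\D_n$. First I would recall that in $\stan(\sigma)=C_1\cdots C_k$, each cycle $C_j=(a_j,\sigma(a_j),\ldots)$ begins with its largest element $a_j$, and the $a_j$ are increasing in $j$. Consequently, when the parentheses are erased to form $\iota(\sigma)$, every cycle-maximum $a_j$ sits at a position that (with the convention $0$--$\infty$) is preceded either by $\sigma(0)=0$ or by an element $\sigma^{r-1}(a_{j-1})$ of the previous cycle, which is necessarily smaller than $a_{j-1}<a_j$; and $a_j$ is followed by $\sigma(a_j)<a_j$ (unless $C_j$ is the final letter, in which case it is followed by $\infty$, but $a_k$ is a left-to-right maximum so it cannot be a peak-type obstruction). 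The key observation is that reading $\iota(\sigma)$ at position $i$ with value $x=\sigma(i)$ recovers exactly the cyclic comparison data: the letter immediately to the left of $x$ in $\iota(\sigma)$ is $\sigma^{-1}(x)$ precisely when $x$ is \emph{not} a cycle-maximum, and the letter immediately to the right is $\sigma(x)$ precisely when $x$ is not the last letter of its cycle.

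With this dictionary in hand, I would argue case by case according to whether $x=\sigma(i)$ is a cyclic valley, cyclic peak, cyclic double excedance, or cyclic double drop of $\sigma$. For a non-maximal letter $x$ with both neighbours internal to its cycle, the linear ascent/descent pattern of $\iota(\sigma)$ at $x$ literally is the cyclic pattern $(\sigma^{-1}(x),x,\sigma(x))$, so cyclic valleys (resp.\ cyclic peaks) of $\sigma$ become linear valleys (resp.\ linear peaks) of $\iota(\sigma)$, cyclic double excedances become $\Lda$, and cyclic double drops become $\Ldd$. For the boundary letters — the cycle-maxima $a_j$ and the cycle-ends $\sigma^{r-1}(a_j)$ — one checks that the $0$--$\infty$ convention makes each cycle-maximum a left-to-right maximum of $\iota(\sigma)$ and never a valley, and the transitions between consecutive cycles contribute a descent $\sigma^{r-1}(a_j)>a_{j+1}$... wait, that is false; rather $\sigma^{r-1}(a_j)$ is compared to $a_{j+1}$, and since $a_{j+1}$ is a \emph{new larger} maximum this is an ascent, so a cycle-maximum is always a linear peak or $\Lda$ at its left edge. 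Carefully bookkeeping these edge contributions, one finds that the count of linear valleys of $\iota(\sigma)$ equals the number of cyclic valleys, giving $\lvalley\,\iota(\sigma)=\cvalley\,\sigma$, and that $\cvalley\,\sigma=\cpeak\,\sigma$ is already recorded in \eqref{cpk=cval}.

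For the remaining equalities I would count $\Lda$ and $\Ldd$. Since $\iota(\sigma)$ is a word of length $n$ with $\sigma(0)=0$ and $\sigma(n+1)=\infty$, every position is exactly one of $\Lval,\Lpeak,\Lda,\Ldd$, so $\lvalley+\lpeak+\lda+\ldd=n$ with $\lvalley=\lpeak$. A letter $x=\sigma(i)$ of $\iota(\sigma)$ is an $\Lda$ iff, reading the true cyclic data, either $x$ is an ascent in its cycle and not a cycle-end in a descending position, or $x$ is a cycle-maximum sitting at a left edge where the preceding smaller letter forces an ascent into $x$; combining these, the $\Lda$-count equals (number of cyclic double excedances) $+$ (number of cycle-maxima that are themselves excedance-tops of $\sigma$), and since every cycle-maximum $a_j$ satisfies $a_j>\sigma(a_j)$ it is \emph{never} an excedance in that sense — instead $a_j$ contributes to $\Lpeak$ or $\Ldd$... so after re-examining, the clean statement drops out: the excedances of $\sigma$ are partitioned as $\exc\,\sigma=\cpeak\,\sigma+\cda\,\sigma$ where $\cda\,\sigma$ is the number of cyclic double excedances, and I would identify $\lda\,\iota(\sigma)=\cda\,\sigma=\exc\,\sigma-\cpeak\,\sigma$; then $\ldd\,\iota(\sigma)=n-\lval-\lpeak-\lda=n-2\cpeak\,\sigma-(\exc\,\sigma-\cpeak\,\sigma)=n-\cpeak\,\sigma-\exc\,\sigma$, and the last identity is the sum of the two. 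The claim $\lpeak\,T(\sigma)=\cpeak\,\sigma$ presumably follows by the analogous analysis of whatever transformation $T$ is (defined in the companion material), using the same cycle dictionary.

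\textbf{Main obstacle.} The delicate point is the boundary bookkeeping: correctly accounting for the contribution of each cycle-maximum $a_j$ and each cycle-terminal element $\sigma^{r-1}(a_j)$ to the four linear statistics under the $0$--$\infty$ convention, and checking that these edge effects conspire to leave $\lvalley=\cvalley$ and $\lda=\cda$ exactly (with no off-by-one errors cycle by cycle). Once the edge contributions are pinned down, the interior letters are immediate and the counting identities are forced by the partition $\lval+\lpeak+\lda+\ldd=n$.
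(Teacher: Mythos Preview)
Your approach is correct and is essentially the same as the paper's: establish the dictionary $\Lval\,\iota(\sigma)=\Cval\,\sigma$, $\Lpeak\,\iota(\sigma)=\Cpeak\,\sigma$, $\Lda\,\iota(\sigma)=\Cda\,\sigma$, $\Ldd\,\iota(\sigma)=\Cdd\,\sigma$, then read off $\lda\,\iota(\sigma)=\cda\,\sigma=\exc\,\sigma-\cpeak\,\sigma$ and deduce $\ldd$ from the partition $n=\lval+\lpeak+\lda+\ldd$. The paper compresses all of this into ``easily seen by the definitions'' for the first line; your boundary worry is overblown because $\sigma\in\D_n$ forces every cycle to have length $\ge 2$, so each cycle-maximum $a_j$ is genuinely a cyclic peak and, in $\iota(\sigma)$, is flanked on the left by something smaller (either $0$ or an element of $C_{j-1}$, all of which are $<a_{j-1}<a_j$) and on the right by $\sigma(a_j)<a_j$, hence is an $\Lpeak$; likewise the last letter of each cycle has $\sigma(x)=a_j>x$ so the right neighbour (either $a_{j+1}$ or $\infty$) is still larger than $x$, and the dictionary is exact with no off-by-one. (The ``$T(\sigma)$'' in the statement is a typo for $\iota(\sigma)$.)
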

\begin{proof}
	The first two identities are easily seen by the definitions of $\sigma$ and $\iota(\sigma)$. For the third identity, 
	\begin{align*}
	\ldd\,\iota(\sigma)=&n-(\lpeak\,\iota(\sigma)+\lvalley\,\iota(\sigma)+\lda\,\iota(\sigma))\\
	=&n-(\cpeak\,\sigma+\cvalley\,\sigma+\exc\,\sigma-\cvalley\,\sigma)\\
	=&n-\cpeak\,\sigma-\exc\,\sigma.
	\end{align*}
	With the second and third identities, the fourth identity can be derived directly. 
\end{proof}

For $\sigma\in \D_n$, define the map $\tau^c_x: \D_n\mapsto\D_n$ by
$$\tau^c_x(\sigma):=\iota^{-1}(\varphi_{x}^{\prime}(\iota(\sigma))).$$
It is easy to see that $\tau^c_x$ is an involution and commutes with $\tau^c_y$ for $x, y\in [n]$.
Let $S\subseteq[n]$, we define $\tau_{S}^{c}:\mathfrak{D}_{n}\rightarrow\mathfrak{D}_{n}$
by 
\begin{align}\label{def:cmfs}
\tau_{S}^{c}(\sigma)=\prod_{x\in S}\tau_{x}^{c}(\sigma).
\end{align}
Sun and Wang~\cite{SW14} defined 
the group action of $\mathbb{Z}_{2}^{n}$  on $\mathfrak{D}_{n}$ via the
involutions $\tau_{S}^{c}$ over all $S\subseteq[n]$; this
group action is called the \textit{cyclic modified Foata}--\textit{Strehl
action}, abbreviated \textit{CMFS-action}, see Figure \ref{CMFS} for an illustration.
For any  permutation $\sigma\in\S_{n}$, let $\Orb(\sigma)=\{g(\sigma): g\in\Z_2^{n}\}$ be the \emph{orbit of $\sigma$ under the CMFS-action}.
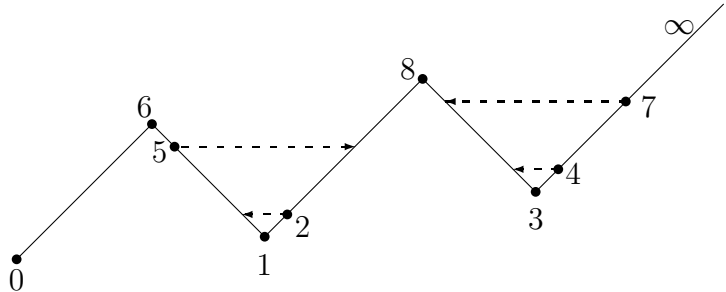
\begin{figure}[htb]
\begin{center}
\setlength {\unitlength} {0.8mm}
\begin {picture} (30,50) \setlength {\unitlength} {1mm}
\thinlines



\put(-15,17){\dashline{1}(1,0)(23,0)}
\put(9,17){\vector(1,0){0.1}}

\put(-1,8){\dashline{1}(1,0)(-4.5,0)}
\put(-6,8){\vector(-1,0){0.1}}

\put(35,14){\dashline{1}(1,0)(-4.5,0)}
\put(30,14){\vector(-1,0){0.1}}

\put(44,23){\dashline{1}(1,0)(-23,0)}
\put(21,23){\vector(-1,0){0.1}}

\put(-37, -2){$0$}
\put(-36, 2){\circle*{1.3}}
\put(-36,2){\line(1,1){18}}
\put(-18, 20){\circle*{1.3}}
\put(-20,21){$6$}
\put(-18,20){\line(1,-1){15}}
\put(-18,15){$5$}
\put(-15, 17){\circle*{1.3}}
\put(-4.1,0){$1$}
\put(-3,5){\circle*{1.3}}
\put(1,5){$2$}
\put(0,8){\circle*{1.3}}
\put(-3,5){\line(1,1){21}}
\put(15,26){$8$}
\put(18, 26){\circle*{1.3}}
\put(32,6){$3$}
\put(33, 11){\circle*{1.3}}
\put(18,26){\line(1,-1){15}}
\put(37,12){$4$}
\put(36, 14){\circle*{1.3}}
\put(47,21){$7$}
\put(45, 23){\circle*{1.3}}\put(33,11){\line(1,1){25}}\put(50,32){$\infty$}


\end{picture}
\end{center}
\caption{CMFS-actions on $(6512)(8347)$
\label{CMFS}}
\end {figure}


\begin{remark}\label{Cha1coeffrela}
The CMFS-action divides the set $\D_{n}$ into disjoint orbits. Moreover, for $\sigma\in \D_{n}$, $x$ is a double drop (resp. double excedance) of $\sigma$ if and only if $x$ is a double excedance (resp.double drop) of $\tau^c_x(\sigma)$. A double drop (resp. double excedance) $x$ of $\sigma$ remains a double drop (resp. double excedance) of $\tau^c_y(\sigma)$ for any $y\neq x$. Hence, there is a unique permutation in each orbit which has no double excedance. 
Let $\check\sigma$ be this unique element in $\Orb(\sigma)$, 
and for any other $\sigma'\in\Orb(\sigma)$, it can be obtained from $\check\sigma$ by repeatedly applying $\tau^c_x$ for some double drop $x$ of $\check\sigma$. Each time this happens, $\exc$ increases by $1$ and $\cdd$ decreases by $1$. Thus by Lemma \ref{Cycstat}, we have  
\begin{align}\label{orbit:excgf}
\sum_{\sigma\in\Orb\:\sigma}t^{\exc\:\sigma}=
t^{\exc\:\check\sigma}(1+t)^{\cdd\:\check\sigma}
=t^{\cpeak\:\check\sigma}(1+t)^{n-2\cpeak\:\check\sigma}.
\end{align}
We obtain 
gamma expansion of derangement polynomials 
immediately by summing over all the orbits that form $\D_n$.
\end{remark}

We can give a more general version of Theorem~\ref{thm:SZ3q}.	
For any subset $\Pi\subseteq\S_n$  let 
\[
A^{(\exc, \cyc)}(\Pi;w,t):=\sum_{\sigma\in\Pi}w^{\cyc\,\sigma}t^{\exc\,\sigma}.
\]
The set $\Pi$
is \emph{invariant} under the CMFS-action 
if $\tau_{S}^{c}(\sigma)\in\Pi$ for any $\sigma\in\Pi$  and any  $S\subseteq[n]$.

\begin{thm}\label{5.15}
	If $\Pi\subseteq\mathfrak{D}_{n}$ is invariant under the CMFS-action,  then 
	\begin{equation}\label{SZ12cyc} 
	A^{(\cyc,\exc)}(\Pi;w,t)=\left(\frac{1+xt}{1+x}\right)^{n}P^{(\cyc,\cpeak,\exc)}\left(\Pi;w,\frac{(1+x)^{2}t}{(x+t)(1+xt)},\frac{x+t}{1+xt}\right),
	\end{equation}
	equivalently, 
	\[
	P^{(\cyc,\cpeak,\exc)}(\Pi;x,t,w)=\left(\frac{1+u}{1+uv}\right)^{n+1}A^{(\cyc,
		\exc)}(\Pi;w,v),
	\]
	where $u=\frac{1+t^{2}-2xt-(1-t)\sqrt{(1+t)^{2}-4xt}}{2(1-x)t}$ and
	$v=\frac{(1+t)^{2}-2xt-(1+t)\sqrt{(1+t)^{2}-4xt}}{2xt}.$ 
\end{thm}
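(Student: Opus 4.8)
The plan is to prove \eqref{SZ12cyc} orbit-by-orbit for the CMFS-action and then sum over the orbits, refining the computation behind Remark~\ref{Cha1coeffrela} so that it also keeps track of $\cyc$ and $\cpeak$. Since $\Pi$ is CMFS-invariant it is a disjoint union of CMFS-orbits, so it suffices to prove \eqref{SZ12cyc} when $\Pi$ is replaced by a single orbit $O=\Orb(\sigma_0)$. The two preliminary facts I would establish are that $\cyc$ and $\cpeak$ are each constant along $O$. For $\cpeak$ this is immediate from Lemma~\ref{Cycstat}: $\cpeak\,\sigma=\cvalley\,\sigma=\lvalley\,\iota(\sigma)$, and $\varphi'_x$ fixes every peak and valley of $\iota(\sigma)$. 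For $\cyc$ I would argue that each map $\tau_x^c$ merely rearranges letters \emph{within a single cycle-block} of $\iota(\sigma)=\stan(\sigma)$: since $\sigma\in\D_n$ has all cycles of length $\ge2$, in the convention $0$--$\infty$ the largest letter $m_j$ of the $j$-th cycle is immediately preceded and followed by smaller letters, so it is a peak of $\iota(\sigma)$ and is precisely the $j$-th left-to-right maximum of $\iota(\sigma)$; when $x$ is a double drop or double excedance it is not a peak, hence $x\ne m_j$ for all $j$, and the subwords $w_2,w_3$ in the $x$-factorization of $\iota(\sigma)$, consisting of letters $<x$ that lie positionally strictly between two consecutive left-to-right maxima, are confined to the block containing $x$. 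Thus $\varphi_x$ only permutes that block and leaves $\{m_1<\dots<m_k\}$ and its positions untouched, so $\cyc\,\tau_x^c(\sigma)=\cyc\,\sigma$. I expect this cycle-invariance to be the only delicate point; everything after it is formal.

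Granting these facts, fix an orbit $O$ and put $k:=\cpeak\,\sigma$ and $c:=\cyc\,\sigma$ for $\sigma\in O$. Equation \eqref{orbit:excgf} gives $\sum_{\sigma\in O}t^{\exc\,\sigma}=t^{k}(1+t)^{n-2k}$, hence
\[
A^{(\cyc,\exc)}(O;w,t)=w^{c}\,t^{k}(1+t)^{n-2k},\qquad
P^{(\cyc,\cpeak,\exc)}(O;w,X,T)=w^{c}X^{k}T^{k}(1+T)^{n-2k}
\]
for arbitrary $X,T$. Substituting $X=\dfrac{(1+x)^{2}t}{(x+t)(1+xt)}$, $T=\dfrac{x+t}{1+xt}$ and using $1+T=\dfrac{(1+x)(1+t)}{1+xt}$ and $XT=\dfrac{(1+x)^{2}t}{(1+xt)^{2}}$, we get
\[
X^{k}T^{k}(1+T)^{n-2k}
=\frac{(1+x)^{2k}t^{k}}{(1+xt)^{2k}}\cdot\frac{(1+x)^{n-2k}(1+t)^{n-2k}}{(1+xt)^{n-2k}}
=\frac{(1+x)^{n}\,t^{k}(1+t)^{n-2k}}{(1+xt)^{n}},
\]
so that $\bigl(\tfrac{1+xt}{1+x}\bigr)^{n}P^{(\cyc,\cpeak,\exc)}(O;w,X,T)=w^{c}t^{k}(1+t)^{n-2k}=A^{(\cyc,\exc)}(O;w,t)$. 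This is \eqref{SZ12cyc} for the single orbit $O$; summing over all CMFS-orbits of $\Pi$ yields \eqref{SZ12cyc}.

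The ``equivalently'' statement is then pure algebra: invert the change of variables by solving $x=\dfrac{(1+u)^{2}v}{(u+v)(1+uv)}$ and $t=\dfrac{u+v}{1+uv}$ for $u$ and $v$; eliminating one variable produces a quadratic whose solution is exactly the pair $u,v$ displayed in the theorem, and rewriting \eqref{SZ12cyc} in the variables $u,v$ gives the stated inverse identity. I would simply invoke this inversion, which is the same one used to pass between \eqref{eqpk1} and \eqref{eupk3}, rather than repeat the computation.
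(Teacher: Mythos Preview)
Your proof is correct and follows essentially the same approach as the paper's: an orbit-by-orbit analysis via the CMFS-action, using the invariance of $\cpeak$ and $\cyc$ on orbits together with \eqref{orbit:excgf}. Your presentation is slightly more direct---you compute both sides of \eqref{SZ12cyc} on a single orbit and compare them---whereas the paper first establishes the intermediate identity \eqref{D-orbitc1} and then rewrites it via Lemma~\ref{Cycstat}; but the substance is identical, including the argument that $\cyc$ is preserved because the left-to-right maxima of $\iota(\sigma)$ are untouched by the modified Foata--Strehl action.
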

%
%

First we prove the following identity. 
\begin{lem} Let $\sigma\in \mathfrak{D}_{n}$. We have
	\begin{equation}\label{D-orbitc1}
	(1+x)^{\cda\;\sigma+\cdd\;\sigma}
	\sum_{\sigma'\in\Orb(\sigma)}t^{\exc\,\sigma'}
	=\sum_{\sigma'\in\Orb(\sigma)}(1+xt)^{\cdd\,\sigma'}(x+t)^{\cda\,\sigma'}t^{\cvalley\,\sigma'}.
	\end{equation}
\end{lem}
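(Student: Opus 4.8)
The plan is to combine the explicit description of a single CMFS-orbit from Remark~\ref{Cha1coeffrela} with the binomial identity $(1+xt)+(x+t)=(1+x)(1+t)$.

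First I would fix $\sigma\in\D_n$ and let $\check\sigma$ denote the unique element of $\Orb(\sigma)$ with no double excedance. Let $F$ be the set of double drops of $\check\sigma$ (its ``free'' vertices) and put $m:=|F|=\cdd\,\check\sigma$. By Remark~\ref{Cha1coeffrela}, every $\sigma'\in\Orb(\sigma)$ is obtained from $\check\sigma$ by converting the double drops in some subset $S\subseteq F$ into double excedances, the map $\sigma'\mapsto S$ is a bijection $\Orb(\sigma)\to 2^F$, and one has $\cda\,\sigma'=|S|$ and $\cdd\,\sigma'=m-|S|$. On derangements $\exc=\cpeak+\cda$, and $\cpeak=\cval$ by \eqref{cpk=cval}; hence $\exc\,\check\sigma=\cpeak\,\check\sigma=:P$, and for every $\sigma'\in\Orb(\sigma)$ we obtain $\cpeak\,\sigma'=\cval\,\sigma'=P$ and $\cda\,\sigma'+\cdd\,\sigma'=m=n-2P$. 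In particular $\cda\,\sigma+\cdd\,\sigma=m$, so the prefactor on the left of \eqref{D-orbitc1} is $(1+x)^{m}$.

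With this dictionary the identity reduces to a routine computation. Using \eqref{orbit:excgf}, the left-hand side of \eqref{D-orbitc1} is
\begin{align*}
(1+x)^{m}\sum_{\sigma'\in\Orb(\sigma)}t^{\exc\,\sigma'}=(1+x)^{m}\,t^{P}(1+t)^{m},
\end{align*}
while, parametrising $\Orb(\sigma)$ by $S\subseteq F$ and pulling out the constant factor $t^{\cval\,\sigma'}=t^{P}$, the right-hand side is
\begin{align*}
\sum_{\sigma'\in\Orb(\sigma)}(1+xt)^{\cdd\,\sigma'}(x+t)^{\cda\,\sigma'}t^{\cval\,\sigma'}
&=t^{P}\sum_{j=0}^{m}\binom mj(1+xt)^{m-j}(x+t)^{j}\\
&=t^{P}\bigl((1+xt)+(x+t)\bigr)^{m}.
\end{align*}
Since $(1+xt)+(x+t)=(1+x)(1+t)$, the two expressions agree, which is \eqref{D-orbitc1}.

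The computation itself is elementary; the only point requiring care is the orbit bookkeeping of the first paragraph — that $\sigma'\mapsto S$ is a genuine bijection and that $\cda+\cdd$ and $\cval$ are orbit invariants — but this is exactly the content of Remark~\ref{Cha1coeffrela} together with the identities $\exc=\cpeak+\cda$ and $\cpeak=\cval$ on $\D_n$, so I do not expect a real obstacle. Summing \eqref{D-orbitc1} over the orbits partitioning $\Pi$ will then give Theorem~\ref{5.15}, and hence Theorem~\ref{thm:SZ3q}.
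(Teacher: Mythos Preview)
Your proof is correct and follows essentially the same approach as the paper: both evaluate the left-hand side via \eqref{orbit:excgf}, parametrise the orbit by subsets of the free (double-ascent/double-drop) positions, and finish with the binomial identity $(1+xt)+(x+t)=(1+x)(1+t)$. The only cosmetic difference is your choice of basepoint---you index the orbit by subsets of the double drops of the canonical representative $\check\sigma$, whereas the paper indexes it by subsets of $J(\sigma)=\Cda\,\sigma\cup\Cdd\,\sigma$ for the given $\sigma$---but this is the same bijection up to relabelling.
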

\begin{proof} 
Let $j=\cda\,\sigma+\cdd\,\sigma$. By \eqref{orbit:excgf} the left-hand side of \eqref{D-orbitc1} is equal to
$$
(1+x)^j t^{\cval\, \check\sigma}  (1+t)^j=t^{\cval \,\check\sigma} (1+xt+x+t)^j.
$$
Let $J(\sigma)$ be the set of indices of double excedances and double drops of $\sigma$, i.e.,
	$$
	J(\sigma):=
	\{i\in [n]: \sigma(i)\, \text{is a double excedance or double drop}\}.
	$$
	Clearly $|J(\sigma)|=j$.
By \eqref{def:cmfs}  CMFS-action  establishes a bijection from  the set of subsets of 
$J(\sigma)$ to $\Orb(\sigma)$ such that  if $S\subset J(\sigma)$ then
$|S|=\cdd\,\sigma'$ with  $\sigma'=\tau_{S}^{c}(\sigma)$. Hence the right-hand side of 
 \eqref{D-orbitc1} is equal to
 $$
t^{\cval \check\sigma}\sum_{S\subset J(\sigma)}(1+xt)^{|S|}
	(x+t)^{j-|S|}.
	$$
Eq.~\eqref{D-orbitc1}  follows then from
	$\sum_{S\subset [j]}(1+xt)^{|S|}
	(x+t)^{j-|S|}=
	(1+xt+x+t)^j$.
\end{proof}

\begin{proof}[Proof of  Theorem~\ref{5.15}]
With Lemma \ref{Cycstat} and Eq. \eqref{D-orbitc1}, we have 
\[
\Big(\sum_{\sigma'\in\Orb(\sigma)}t^{\exc\,\sigma'}\Big)(1+x)^{n-2\cpeak\,\sigma'}=\sum_{\sigma'\in\Orb(\sigma)}(1+xt)^{n-\exc\,\sigma'-\cpeak\,\sigma'}(x+t)^{\exc\,\sigma'-\cpeak\,\sigma'}t^{\cpeak\,\sigma'},
\]
which is equivalent to 
\[
\sum_{\sigma'\in\Orb(\sigma)}t^{\exc\,\sigma'}=\sum_{\sigma'\in\Orb(\sigma)}\frac{(1+xt)^{n-\exc\,\sigma'-\cpeak\,\sigma'}(x+t)^{\exc\,\sigma'-\cpeak\,\sigma'}t^{\cpeak\,\sigma'}}{(1+x)^{n-2\cpeak\,\sigma'}}.
\]
Then, summing over all the orbits leads to 
\[
\sum_{\sigma\in\Pi}t^{\exc\,\sigma}=\sum_{\sigma\in\Pi}\frac{(1+xt)^{n-\exc\,\sigma-\cpeak\,\sigma}(x+t)^{\exc\,\sigma-\cpeak\,\sigma}t^{\cpeak\,\sigma}}{(1+x)^{n-2\cpeak\,\sigma}}.
\]
For $\sigma'\in \Orb(\sigma)$, first we have $\cyc(\sigma')=\cyc(\sigma)$.
From the definition of $o(\sigma)$,
we have $\cyc(\sigma)$ is equal to the number of left-to-right maximum of $o(\sigma)$.
It is easy to see that the number of left-to-right maximum is invariant under MFS-action.
Thus the number  $\cyc(\sigma')$ is invariant for any $\sigma'\in \Orb(\sigma)$.
Therefore,
\begin{align*}
\sum_{\sigma\in\Pi}t^{\exc\,\sigma}w^{\cyc\,\sigma}&=\sum_{\sigma\in\Pi}\frac{(1+xt)^{n-\exc\,\sigma-\cpeak\,\sigma}(x+t)^{\exc\,\sigma-\cpeak\,\sigma}t^{\cpeak\,\sigma}}{(1+x)^{n-2\cpeak\,\sigma}}w^{\cyc\,\sigma}
\\
&=\left(\frac{1+xt}{1+x}\right)^{n}P^{(\cpeak,\exc,\cyc)}\left(\Pi;\frac{(1+x)^{2}t}{(x+t)(1+xt)},\frac{x+t}{1+xt},w\right).
\end{align*}
The proof is completed.
\end{proof}

\begin{remark}
Recently, using the joint distribution of the cyclic valley number and excedance number statistics Cooper, Jones and Zhuang~\cite{CJZ19} have
 generalized the formula of Stembridge by applying  Sun and 
Wang's CMFS-action. In particular they also obtained the $w=1$
case of Theorem~\ref{5.15}.
\end{remark}

\subsection{Proof of Theorem~\ref{thm:cpeakS_1}}

For our purpose we define  a 3-\emph{Motzkin path}  of length $n$  as a word $\mathbf{s}:=s_1\ldots s_n$ on the alphabet $\{\su, \sd, \ly, \lb, \lr\}$ such that  $|s_1\ldots s_n|_\su= [s_1\ldots s_n|_\sd$ and the \emph{height} of the $i$th step is nonnegative, i.e., 
		\begin{equation}\label{heigt-condition3}
		h_{i}(\mathbf{s}):=|s_1\ldots s_i|_\su-[s_1\ldots s_i|_\sd\geq 0 \quad (i=1, \ldots, n),
		\end{equation}
	 where $|s_1\ldots s_i|_\su$ is the number of letters $\su$ in the word $s_1\ldots s_i$. 
	 Let
$$
\alpha(\mathbf{s}):=\{i\in [n]: s_i=\alpha\}\quad \textrm{for}\quad \alpha\in \{\su, \sd, \ly, \lb, \lr\}.
$$	 
	 

A \emph{variant restricted Laguerre history} of length  $n$ is  a pair $(\mathbf{s, p})$, where $\mathbf{s}$ is a 3-Motzkin path $s_1\ldots s_n$ and $\mathbf{p}=(p_1, \ldots, p_n)$ with $0\leq p_i\leq h_{i-1}(\mathbf{s})$ if $s_i=\su$, $0\leq p_i\leq h_{i-1}(\mathbf{s})-1$ if $s_i=\sd,\lb,\lr$ and $p_i=h_{i-1}$ if $s_i=\ly$ with $h_0(\mathbf{s})=0$. 
Let $\LH'_n$ be the set of variant restricted Laguerre histories of length $n$.

We use a variant of Foata-Zeilberger's bijection $\phi_{FZ} : \S_{n} \to \LH'_{n}$ (cf. \eqref{resFZ}). Given $\sigma\in \S_{n}$, we construct the variant restricted Laguerre history
$\phi'_{{FZ}}(\sigma):=(\mathbf{s, p})\in \L'_n$ as follows. 
For $i=1, \ldots, n$, let
\begin{align}\label{varresFZ}
s_i=\begin{cases}
\su &\textrm{if} \; i\in \Cval\sigma,
\\
\sd &\textrm{if} \; i\in \Cpeak\sigma,
\\
\la &\textrm{if} \; i\in \Cdd\sigma,
\\
\lb &\textrm{if} \; i\in \Cda\sigma,
\\
\ly &\textrm{if} \; i\in \Fix\sigma,
\end{cases}
\end{align}
with $p_i=\nest_i\sigma$.

\begin{lem}\label{lem: phiFZ} If $\phi'_{FZ}(\sigma)=(\mathbf{s, p})\in \L'_n$ with $\sigma\in \S_n$, then 
	\begin{subequations}
	\begin{align}
	\Fix\,\sigma\,&=\,\ly(\mathbf{s}),\label{grouplag1}\\
	\Exc\,\sigma\,&=\,\lb(\mathbf{s})\,\cup \su(\mathbf{s}),\label{grouplag2}\\
	\nest\,\sigma\,&=\,\sum_{i=1}^np_i,\label{grouplag3}\\
	\exc\,\sigma+\cros\,\sigma+\nest\,\sigma\,&=\,\sum_{i=1}^nh_{i-1}(\mathbf{s}),\label{grouplag4}
	\end{align}
	\end{subequations}
	where $\Exc\,\sigma$ denotes the set of excedances of $\sigma$.
	\end{lem}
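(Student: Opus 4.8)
The plan is to read \eqref{grouplag1}, \eqref{grouplag2} and \eqref{grouplag3} off the definition \eqref{varresFZ} of $\phi'_{FZ}$, and to reduce \eqref{grouplag4} to a computation of the height word of the $3$-Motzkin path $\mathbf{s}$. For \eqref{grouplag1}: the letter $\ly$ occupies position $i$ exactly when $i\in\Fix\sigma$, so $\ly(\mathbf{s})=\Fix\sigma$. For \eqref{grouplag2}: $\su(\mathbf{s})=\Cval\sigma$ and $\lb(\mathbf{s})=\Cda\sigma$, and a value $k$ is a cyclic valley or a double excedance precisely when $\sigma(k)>k$ (then automatically $\sigma^{-1}(k)\neq k$), so $\su(\mathbf{s})\cup\lb(\mathbf{s})=\{k\in[n]:\sigma(k)>k\}=\Exc\sigma$. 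For \eqref{grouplag3}: $p_i=\nest_i\sigma$ by construction, hence $\sum_{i=1}^np_i=\sum_{i=1}^n\nest_i\sigma=\nest\sigma$.

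The content is \eqref{grouplag4}. First I would record the elementary identity
\begin{equation}\label{eq:crosnest-key}
\cros_i\sigma+\nest_i\sigma=\#\{j<i:\sigma(j)>i\}\qquad(i\in[n],\ \sigma\in\S_n),
\end{equation}
proved by a short case check on the sign of $\sigma(i)-i$: in each of the cases $\sigma(i)>i$, $\sigma(i)=i$, $\sigma(i)<i$, one alternative in the definition \eqref{def:cros} of $\cros_i$ and one alternative in the definition \eqref{def:nest} of $\nest_i$ is vacuous, and the two surviving conditions partition $\{j<i:\sigma(j)>i\}$ when $\sigma(i)\geq i$, resp.\ $\{j>i:\sigma(j)\leq i\}$ when $\sigma(i)\leq i$; the latter has the same cardinality as the former since $\sigma$ is a bijection of $[n]$ (both count $\#\{j\leq i:\sigma(j)>i\}$, and $\sigma(i)\leq i$ keeps the index $i$ out of that set). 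This is in the spirit of the height identities invoked in the proof of Corollary~\ref{starpqzhuang}.

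Next, writing $\phi'_{FZ}(\sigma)=(\mathbf{s},\mathbf{p})$ and $f(i):=\#\{j<i:\sigma(j)\geq i\}$, I would prove $h_{i-1}(\mathbf{s})=f(i)$ for all $i\in[n]$ by induction on $i$, with base case $h_0=0=f(1)$. For the inductive step, $h_i-h_{i-1}=[s_i=\su]-[s_i=\sd]$ since the other three letter types are level steps; comparing \eqref{varresFZ} with the definitions of cyclic valley, cyclic peak, double excedance, double drop and fixed point shows this equals $[\sigma(i)>i]-[\sigma^{-1}(i)<i]$, and a one-line count gives $f(i+1)-f(i)=[\sigma(i)>i]-[\sigma^{-1}(i)<i]$ as well. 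Hence $h_{i-1}(\mathbf{s})=f(i)=\#\{j<i:\sigma(j)>i\}+[\sigma^{-1}(i)<i]=\cros_i\sigma+\nest_i\sigma+[\sigma^{-1}(i)<i]$ by \eqref{eq:crosnest-key}, and summing over $i\in[n]$ with $\sum_{i=1}^n[\sigma^{-1}(i)<i]=\#\{j\in[n]:j<\sigma(j)\}=\exc\sigma$ yields \eqref{grouplag4}. The only mildly delicate points are the two routine case verifications — the partition claim behind \eqref{eq:crosnest-key} and the matching of the five letter types of $\phi'_{FZ}$ with the sign pattern $[\sigma(i)>i]-[\sigma^{-1}(i)<i]$ — after which everything is bookkeeping; alternatively \eqref{eq:crosnest-key} could be extracted from the continued-fraction machinery of~\cite{SZ10}.
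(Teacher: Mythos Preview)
Your proof is correct and follows essentially the same approach as the paper: for \eqref{grouplag1}--\eqref{grouplag3} both simply read off the definition of $\phi'_{FZ}$, and for \eqref{grouplag4} both establish a per-index identity relating $\exc_i\sigma+\cros_i\sigma+\nest_i\sigma$ to $h_{i-1}(\mathbf{s})$ (up to a correction depending on the step type $s_i$) and then sum over $i$. The only difference is organizational: the paper packages the five cases into a single formula \eqref{laglemeq} verified inductively, whereas you first isolate the closed form $\cros_i\sigma+\nest_i\sigma=\#\{j<i:\sigma(j)>i\}$ and the height identity $h_{i-1}(\mathbf{s})=\#\{j<i:\sigma(j)\geq i\}$ separately before combining them --- a slightly more explicit route to the same computation.
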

\begin{proof}
	From the construction of $\phi'_{FZ}$, it is easy to see \eqref{grouplag1}-\eqref{grouplag3}. 
	Define 
	\[
	\exc_i\sigma=
	\left\{
	\begin{array}{ll}
	1 & \text{if $\sigma(i)>i$}, \\
	0 & \text{if $\sigma(i)\leq i$.} 
	\end{array}
	\right.
	\]
	By inductions on $i\in [n]$ we verify that
	\begin{equation}\label{laglemeq}
	\exc_i\sigma+\nest_{i}\sigma+\cros_{i}\sigma=
	\left\{
	\begin{array}{ll}
	h_{i-1}(\mathbf{s})+1, & \text{if $s_i=\su$}, \\
	h_{i-1}(\mathbf{s}) & \text{if $s_i=\lr$}, \\
	h_{i-1}(\mathbf{s})-1 & \text{if $s_i=\sd$}, \\
	h_{i-1}(\mathbf{s}) & \text{if $s_{i}=\lb$},\\
	h_{i-1}(\mathbf{s}) & \text{if $s_i=\ly$}.\\	
	\end{array}
	\right.
	\end{equation}
	This implies \eqref{grouplag4} immediately.
\end{proof}
We define a  $\mathbb{Z}_2^n$-action on $\LH'_n$, which is  similar 
to Yan-Zhou-Lin's  group action on 
$\LH_n$ in \cite{YZL19} and  a generalization of Lin's group action on $\2MP^{^*}_n$ in \cite{Lin17}.
 Let $i\in[n]$ and $(\mathbf{s, p})\in\LH'_n$. Define the group action $\theta_i$ as follows, 
\[
\theta_i((\mathbf{s, p}))=
\left\{
\begin{array}{ll}
(\mathbf{s, p}) & \text{if $i\in \ly(\mathbf{s})$,} \\
(\mathbf{s', p}) & \text{otherwise,} 
\end{array}
\right.
\] 
where $\mathbf{s'}$ is the 3-Motzkin path  obtained from $\mathbf{s}$ by 
changing $s_i$ as
$L_b\leftrightarrow L_r$.
For any subset $S\subseteq [n]$ define the mapping $\theta'_{S}: \LH'_n\rightarrow \LH'_n$ by 
\begin{equation}\label{def:lagaction}
\theta'_S((\mathbf{s, p}))=\prod _{i\in S} \theta_i((\mathbf{s, p})).
\end{equation} 

Hence the group $\mathbb{Z}_2^n$ acts on $\LH'_n$ via the function $\theta'_S$.
Note  that  the three  sequences $\ly(\mathbf{s})$, 
$\mathbf{p}$ and $(h_{0}(\mathbf{s}), \ldots, (h_{n-1}(\mathbf{s}))$ are invariant
under the group action.
This action divides the set $\LH'_n$ into disjoint orbits and each orbit has a unique restricted Laguerre history whose  level steps are $\neq L_y$ or $\lb$.
For any fixed $(\mathbf{s, p})\in \LH'_n$ let $\Orb((\mathbf{s, p})):=\{\,\theta'_{S}((\mathbf{s, p}))\mid S\subseteq[n]\,\}$.
For $0\leq j\leq n$  we define
\begin{subequations}
\begin{align}
\S_{n, j}&=\{\sigma\in \S_n: \fix\sigma=j \},\label{def:Sj}\\
\mathcal{R}_{n,j}&=\{(\mathbf{s, p})\in\LH'_n : |\s|_{\ly}=j\}.\label{def:Rj}
\end{align}
\end{subequations}
where  $|\s|_a$ means the number of letters $a$ in the word $\s$. 
\begin{lem}
	Let $(\mathbf{s, p})\in\mathcal{R}_{n, j}$.  
	We have
	\begin{gather}
	(1+x)^{|\s|_{\lb}+|\s|_{\lr}}\sum_{(\mathbf{s', p})\in\Orb(\mathbf{s, p})}
	t^{|\s'|_{\lb}}\nonumber\\
	=\sum_{(\mathbf{s', p})\in\Orb(\mathbf{s, p})}(1+xt)^{|\s'|_{\lr}}
	(x+t)^{|\s'|_{\lb}}.\label{orbitc1}
	\end{gather}
\end{lem}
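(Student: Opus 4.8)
The plan is to exploit the $\mathbb{Z}_2^n$-action $\theta'_S$ on $\Orb(\mathbf{s},\mathbf{p})$ exactly as in the proof of the companion identity~\eqref{D-orbitc1}. First I would record the structural facts already noted after~\eqref{def:lagaction}: under the action the three sequences $\ly(\mathbf{s})$, $\mathbf{p}$, and $(h_0(\mathbf{s}),\dots,h_{n-1}(\mathbf{s}))$ are invariant; in particular the set of indices $i$ with $s_i\in\{\lb,\lr\}$ is the same for every element of the orbit, call this set $K$, and $|K|=|\mathbf{s}|_{\lb}+|\mathbf{s}|_{\lr}$ is constant on the orbit (this uses that $\su,\sd,\ly$ positions are frozen). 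For each $i\in K$ the involution $\theta_i$ swaps the colors $\lb\leftrightarrow\lr$ at position $i$ and fixes everything else, and these involutions commute, so $S\mapsto\theta'_S(\mathbf{s},\mathbf{p})$ is a bijection from subsets of $K$ onto $\Orb(\mathbf{s},\mathbf{p})$. Moreover, since the $\ly$-steps are frozen, $|\mathbf{s}'|_{\lr}+|\mathbf{s}'|_{\lb}=|K|$ is constant along the orbit; there is nothing delicate here beyond bookkeeping.

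Next I would rewrite each side of~\eqref{orbitc1} as a sum over subsets $S\subseteq K$. Let $m:=|K|$. For $\sigma':=\theta'_S(\mathbf{s},\mathbf{p})$, choosing the convention that $\theta_i$ turns an $\lb$ into an $\lr$ exactly when $i\in S$ (and starting from the orbit representative all of whose level steps in $K$ are $\lb$), one has $|\mathbf{s}'|_{\lr}=|S|$ and $|\mathbf{s}'|_{\lb}=m-|S|$. Hence the left-hand side of~\eqref{orbitc1} becomes
\begin{equation*}
(1+x)^m\sum_{S\subseteq K}t^{\,m-|S|}=(1+x)^m\sum_{k=0}^{m}\binom{m}{k}t^{m-k}=(1+x)^m(1+t)^m,
\end{equation*}
while the right-hand side becomes
\begin{equation*}
\sum_{S\subseteq K}(1+xt)^{|S|}(x+t)^{m-|S|}=\sum_{k=0}^{m}\binom{m}{k}(1+xt)^{k}(x+t)^{m-k}=\bigl((1+xt)+(x+t)\bigr)^m.
\end{equation*}

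The two expressions agree because $(1+x)(1+t)=1+x+t+xt=(1+xt)+(x+t)$, so both sides equal $(1+x)^m(1+t)^m$. I would present this as the closing line. The only point requiring a little care — and the step I expect to be the main (minor) obstacle — is justifying cleanly that $S\mapsto\theta'_S$ is a genuine bijection onto the orbit and that it tracks the statistic $|\mathbf{s}'|_{\lb}$ linearly in $|S|$; this follows from the commuting-involutions observation and the fact that $\theta_i$ changes $|\mathbf{s}|_{\lb}$ by exactly $\pm1$ and leaves all other entries untouched, so that distinct subsets of $K$ give distinct colorings and every coloring of the $K$-positions by $\{\lb,\lr\}$ is realized. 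Once this is in place the identity is a one-line binomial computation, in complete parallel with the proof of~\eqref{D-orbitc1}.
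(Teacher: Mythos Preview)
Your proposal is correct and follows essentially the same approach as the paper: both arguments fix the set of $\lb/\lr$-positions (your $K$, the paper's $L(\mathbf{s})$), use the bijection between subsets of this set and the orbit, and reduce the identity to the binomial computation $(1+x)^\ell(1+t)^\ell=(1+xt+x+t)^\ell$. The only cosmetic difference is that the paper parametrizes orbit elements so that $S=\lr(\mathbf{s}')$ directly, whereas you pass through the all-$\lb$ representative; the resulting sums are the same up to replacing $S$ by its complement.
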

\begin{proof}
Let $L(\s)=\{i\in [n]: \textrm{$s_i=\lb$ or $s_i=\lr$}\}$	with cardinality $\ell=|\s|_{\lr}+|\s|_{\lb}$.
	By \eqref{def:lagaction}, the  group action  establishes a bijection from  the set of subsets of 
$L(\mathbf{s})$ to $\Orb(\mathbf{s, p})$ such that  if $S\subset L(\mathbf{s})$ then
$S=\lr\,(\mathbf{s'})$ with  $(\mathbf{s', p})=\theta'_{S}((\mathbf{s, p}))$. 
	Eq.~\eqref{orbitc1} is equivalent to
	$$
	(1+x)^\ell \sum_{S\subset L(\mathbf{s})}t^{|S|}=\sum_{S\subset L(\mathbf{s})}(1+xt)^{|S|}
	(x+t)^{\ell-|S|},
	$$
	namely,
	$(1+x)^\ell\cdot (1+t)^\ell=(1+xt+x+t)^\ell$.
\end{proof} 
\begin{proof}[Proof of Theorem~\ref{thm:cpeakS_1}]
For $(\mathbf{s, p})\in\mathcal{R}_{n, j}$, 
 since $2|\s|_{\su}+|\s|_{\lr}+|\s|_{\lb}+j=n$, by   \eqref{orbitc1}, we have 
\begin{gather}\label{eq:4.7}
(1+x)^{n-j-2|\s'|_{\su}}\sum_{(\mathbf{s', p})\in\Orb(\mathbf{s, p})}t^{|\s'|_{\lb}}\\
=\sum_{(\mathbf{s', p})\in\Orb(\mathbf{s, p})}(1+xt)^{n-j-2|\s'|_{\su}-|\s'|_{\lb}}(x+t)^{|\s'|_{\lb}},\nonumber
\end{gather}
that is,
\[
\sum_{(\mathbf{s', p})\in\Orb(\mathbf{s, p})}
t^{|\s'|_{\lb}}=\sum_{(\mathbf{s',\,p})\in\Orb(\mathbf{s, p})}\frac{(1+xt)^{n-j-2|\s'|_{\su}-|\s'|_{\lb}}(x+t)^{|\s'|_{\lb}}}{(1+x)^{n-j-2|\s'|_{\su}}}.
\]
Summing over all the orbits leads to 

\[
\sum_{(\mathbf{s, p})\in\mathcal{R}_{n, j}}t^{|\s|_{\lb}}=\sum_{(\mathbf{s, p})\in\mathcal{R}_{n, j}}\frac{(1+xt)^{n-j-2|\s|_{\su}-|\s|_{\lb}}(x+t)^{|\s|_{\lb}}}{(1+x)^{n-j-2|\s|_{\su}}}.
\]
Let
$$
|\mathbf{p}|=\sum_{i=1}^np_i\quad \textrm{and}\quad  h(\s)=\sum_{i=1}^nh_{i-1}(\mathbf{s}).
$$
Since $\su(\mathbf{s})$, $\mathbf{p}$ and $(h_0(\mathsf{s}), \ldots, 
h_{n-1}(\mathbf{s}))$ are invariant under the group action, 
\begin{align}\label{Laggroup1}
&\sum_{(\mathbf{s, p})\in\mathcal{R}_{n, j}}\biggl(p^{|\mathbf{p}|}q^{h(\s)-|\mathbf{p}|}\biggr)t^{|\s|_{\lb}+|\s|_{\su}}\nonumber\\
=&\sum_{(\mathbf{s, p})\in\mathcal{R}_{n, j}}\biggl(p^{|\mathbf{p}|}q^{h(\s)-|\mathbf{p}|}\biggr)\frac{(1+xt)^{n-j-2|\s|_{\su}-|\s|_{\lb}}(x+t)^{|\s|_{\lb}}t^{|\s|_{\su}}}{(1+x)^{n-j-2|\s|_{\su}}}.
\end{align}
By Lemma~\ref{lem: phiFZ}, as the bijection $\phi'_{FZ}$ maps $\S_{n, j}$ to $\mathcal{R}_{n, j}$ with corresponding statistics, we can rewrite \eqref{Laggroup1} as
\begin{align}
&\sum_{\sigma\in\S_{n, j}}\biggl(p^{\nest\,\sigma}q^{\cros\,\sigma+\exc\,\sigma}\biggr)t^{\exc\,\sigma}\label{lageq}\\
=&\sum_{\sigma\in\S_{n, j}}
\biggl(p^{\nest\,\sigma}q^{\cros\,\sigma+\exc\,\sigma}\biggr)
\frac{(1+xt)^{n-j-\exc\,\sigma-\cpeak\,\sigma}(x+t)^{\exc\,\sigma-\cpeak\,\sigma}t^{\cpeak\,\sigma}}{(1+x)^{n-j-2\cpeak\,\sigma}}\nonumber\\
=&
\left(\frac{1+xt}{1+x}\right)^{n-j}P^{(\nest, \cros, \cpeak,\exc)}\left(\S_{n, j}; p, q, \frac{(1+x)^{2}t}{(x+t)(1+xt)},\frac{q(x+t)}{1+xt}\right).\nonumber
\end{align}
Multiplying \eqref{lageq}  by $r^j$ and summing over $j$  yields 
 (\ref{cpeakS_1}). By using
the substitution $u=\frac{(1+x^2)t}{(x+t)(1+xt)}$ and $v=\frac{x+t}{1+xt}$ as in \eqref{cpeakS_1},  we obtain (\ref{cpeakS_2}) immediately.
\end{proof}


\begin{remark} We show that  Eq.~\eqref{Laggroup1} implies  also  two other known results in the litterature.
	When $x=1$ Eq.\eqref{Laggroup1} reduces to
	\begin{align}\label{Laggroup2}
	\sum_{(\mathbf{s, p})\in\mathcal{R}_{n, j}}\biggl(p^{|\mathbf{p}|}q^{h(\s)-|\mathbf{p}|}\biggr)t^{|\s|_{\lb}+|\s|_{\su}}=
	\sum_{(\mathbf{s, p})\in\mathcal{R}_{n, j}}\biggl(p^{|\mathbf{p}|}q^{h(\s)-|\mathbf{p}|}\biggr)\frac{(1+t)^{n-j-2|\s|_{\su}}t^{|\s|_{\su}}}{2^{n-j-2|\s|_{\su}}}.
	\end{align}
Let
	\begin{align*}
	\mathcal{O}_{n,k, j}=\{(\mathbf{s, p})\in \mathcal{R}_{n, j}: 
	|\s|_{\lb}=0\;\textrm{ and }\; |\s|_{\su}=k\}.
	\end{align*}
	By the group action on $\mathcal{R}_{n, j}$, we see that there are $2^{n-j-2|\s|_{\su}}$ elements in each orbit,  and then
	$$
	2^{n-2k-j}|\mathcal{O}_{n, k,j}|=|\{(\mathbf{s, p})\in\mathcal{R}_{n, j}: |\s|_{\su}=k\}|.
	$$
	Hence \eqref{Laggroup2} is equivalent to 
	\begin{align}\label{Laggroup3}
	\sum_{(\mathbf{s, p})\in\mathcal{R}_{n, j}}\biggl(p^{|\mathbf{p}|}q^{h(\s)-|\mathbf{p}|}\biggr)t^{|\s|_{\lr}+|\s|_{\su}}=\sum_{k=0}^n\sum_{(\mathbf{s, p})\in\mathcal{O}_{n, j, k}}\biggl(p^{|\mathbf{p}|}q^{h(\s)-|\mathbf{p}|}\biggr)(1+t)^{n-j-2k}t^k.
	\end{align} 
Thanks to  the bijection $\phi'_{FZ}$ and  \eqref{grouplag1}--\eqref{grouplag4} we obtain Theorem~8 in \cite{SZ12},
	\begin{align}
	&\sum_{\pi\in\S_n}(tq)^{\exc\,\pi}p^{\nest\,\pi}q^{\cros\,\pi}r^{\fix\,\pi}\nonumber\\
	=&
	\sum_{j=0}^{n}r^j\sum_{k=0}^{\lfloor(n-j)/2\rfloor}\biggl(
	\sum_{\sigma\in\S_{n,k, j}}p^{\nest\,\sigma}q^{\cros\,\sigma+\exc\,\sigma}
	\biggr)t^k(1+t)^{n-j-2k},\label{SZTh8}
	\end{align}
	where 
	$$
	\S_{n, k,j}=\{\sigma\in\S_n, \cpk\,\sigma= k, \fix\,\sigma= j, \cda\,\sigma=0\}.
	$$ 	
	By Lemma~\ref{321:nest} and \eqref{INV},
	letting $p=0$ in \eqref{SZTh8}  yields  Theorem 2.4 in 
 \cite{Lin17}, 
	\begin{align}\label{linfixversion}
	&\sum_{\sigma\in\S_n(321)}t^{\exc\,\sigma}q^{\inv\,\sigma}r^{\fix\,\sigma}\nonumber\\
	=&\sum_{j=0}^nr^j\sum_{k=0}^{\lfloor {(n-j)}/{2}\rfloor}\left(\sum_{\sigma\in\S_{n, k,j}(321)}q^{\inv\,\sigma}\right)t^k(1+t)^{n-j-2k},
	\end{align}
	where 
	$$\S_{n, k,j}(321):=\{\sigma\in\S_{n}(321):\fix\,\sigma=j,\, \exc\,\sigma=k,~\cda\,\sigma=0\}.$$
	Note that when $r=1+t$, Eq.~\eqref{linfixversion} reduces to Eq.~\eqref{general-qgamma} with the $\gamma$-coefficients in \eqref{eq:inv321}--\eqref{eq:213:132}.
\end{remark}

\section{Proofs via continued fractions}

For convenience, 
we use the following compact notation for  the J-type continued fraction
\begin{align}\label{J-CF}
J[z; b_n, \lambda_n]=
\cfrac{1}{1-b_0z-
\cfrac{\lambda_1z^2}{1-b_1z-
\cfrac{\lambda_2\,z^2}{1-b_2z-
\cfrac{\lambda_3z^2}{1-\cdots}}}}.
\end{align}
We shall use the notation  $[n]_{p,q}:=(p^n-q^n)/(p-q)$ for $n\in \N$.
\subsection{Some combinatorial continued fractions}

We first recall a standard contraction formula for continued fractions, see \cite[Eq. (44)]{SZ10}.
\begin{lem}[Contraction formula]\label{contra-formula}
\,\,The following contraction formulae  hold
\begin{align*}
\cfrac{1}{
1-\cfrac{\alpha_{1}z}{1-\cfrac{\alpha_{2}z}{
1-\cfrac{\alpha_{3}z}{
1-\cfrac{\alpha_{4}z}
{1-\cdots}}}}}
&=\cfrac{1}{1-\alpha_{1}z-\cfrac{\alpha_{1}\alpha_{2}z^{2}}{1-(\alpha_{2}+\alpha_{3})z-\cfrac{\alpha_{3}\alpha_{4}z^{2}}{1-\cdots}}}\\
&=1+\cfrac{\alpha_{1}z}{1-(\alpha_{1}+\alpha_{2})z-\cfrac{\alpha_{2}\alpha_{3}z^{2}}{1-(\alpha_{3}+\alpha_{4})z-\cfrac{\alpha_{4}\alpha_{5}z^{2}}{1-\cdots}}}.
\end{align*}
\end{lem}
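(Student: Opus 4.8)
The plan is to work throughout in the ring of formal power series $\R[[z]]$ over the field of rational functions in the indeterminates $\alpha_1,\alpha_2,\dots$, so that each continued fraction occurring in the statement is a well-defined element of $z\R[[z]]$ or of $1+z\R[[z]]$; an infinite continued fraction is read as the common limit of its depth-$N$ truncations, and modulo $z^{N+1}$ only the first $N$ levels contribute. For $k\ge 1$ I would introduce the tail
\[
G_k:=\cfrac{\alpha_k z}{1-\cfrac{\alpha_{k+1}z}{1-\cfrac{\alpha_{k+2}z}{1-\cdots}}},
\]
which satisfies $G_k=\alpha_k z/(1-G_{k+1})$, so that the left-hand side of the lemma is $F:=1/(1-G_1)$, and I would set $P_k:=G_k/(1-G_k)\in z\R[[z]]$.

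The first step is the single recursion
\[
P_k=\cfrac{\alpha_k z}{1-(\alpha_k+\alpha_{k+1})z-\alpha_{k+1}z\,P_{k+2}}\qquad(k\ge1).
\]
This follows by a one-line computation: $G_k=\alpha_k z/(1-G_{k+1})$ gives $P_k=\alpha_k z/(1-\alpha_k z-G_{k+1})$, and substituting $G_{k+1}=\alpha_{k+1}z\,(1-G_{k+2})^{-1}=\alpha_{k+1}z\,(1+P_{k+2})$ produces the displayed formula. Iterating it — at each stage rewriting $\alpha_{k+1}z\,P_{k+2}$ as $\alpha_{k+1}\alpha_{k+2}z^2/(1-(\alpha_{k+2}+\alpha_{k+3})z-\alpha_{k+3}z\,P_{k+4})$, and so on — identifies $P_k$ with the $J$-fraction
\[
P_k=\cfrac{\alpha_k z}{1-(\alpha_k+\alpha_{k+1})z-\cfrac{\alpha_{k+1}\alpha_{k+2}z^2}{1-(\alpha_{k+2}+\alpha_{k+3})z-\cdots}}.
\]

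The last step is to read off the two asserted equalities. For the even contraction, I would note $G_1-\alpha_1 z=\alpha_1 z\big((1-G_2)^{-1}-1\big)=\alpha_1 z\,P_2$, whence $F=(1-\alpha_1 z-\alpha_1 z\,P_2)^{-1}$; inserting the $J$-fraction for $P_2$ (the case $k=2$) turns $\alpha_1 z\,P_2$ into $\alpha_1\alpha_2 z^2/\big(1-(\alpha_2+\alpha_3)z-\cdots\big)$, which is exactly the middle member of the lemma. For the odd contraction, $F=1+G_1/(1-G_1)=1+P_1$, and the $J$-fraction for $P_1$ (the case $k=1$) is precisely $\alpha_1 z/\big(1-(\alpha_1+\alpha_2)z-\alpha_2\alpha_3 z^2/(1-(\alpha_3+\alpha_4)z-\cdots)\big)$, the right-hand member. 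The only point needing care — and the mildest possible obstacle — is the justification of the limit in the infinite fractions: one performs all the above manipulations on the depth-$N$ truncations and observes that the $N$-th truncation agrees with the full fraction modulo $z^{N+1}$, so every displayed equality is a genuine identity in $\R[[z]]$; alternatively, one simply invokes the classical even/odd contraction formulae for continued fractions as recorded in \cite{SZ10,Fla80}.
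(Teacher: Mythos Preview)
Your argument is correct: the recursion for $P_k$ is verified exactly as you indicate, and the even and odd contractions drop out from the cases $k=2$ and $k=1$ respectively; the truncation remark handles convergence in $\R[[z]]$.

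There is nothing to compare against, because the paper does not prove this lemma at all: it is quoted as a standard fact with a reference to \cite[Eq.~(44)]{SZ10}. What you have written is essentially the classical derivation of the even/odd contraction formulae, so you have supplied a self-contained proof where the paper is content to cite the literature.
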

 The following  four combinatorial continued fraction expansions are proved by  
Shin and Zeng~\cite{SZ12}.
 Let
\begin{subequations}
\begin{align}
A_n(p,q,t,u,v,w) := &\sum_{\sigma\in \S_n} p^{\nest\, \sigma} q^{\cros \,\sigma}  t^{\exc\,\sigma}u^{\cdd^*\,\sigma} v^{\cda^*+\fix^*\,\sigma}w^{\cpk^*\,\sigma}\label{C1eq:dfA}\\
=& \sum_{\sigma\in \S_n} p^{(\bac)\, \sigma} q^{(\cab)\, \sigma}  t^{\des\,\sigma}u^{\da\,\sigma} v^{\dd\,\sigma}w^{\peak\,\sigma-1},\label{C1eq:dfA1}
\end{align}
\end{subequations}
where the equality of the two enumerative polynomials  follows from Lemma~\eqref{starpqzhuang}.

\begin{lem}\cite[Eq. (28)]{SZ12}\label{Cha1generalA}
We have
\begin{subequations}
\begin{equation}\label{C1eq:A}
\sum_{n\geq0} A_{n+1}(p,q,t,u,v,w)z^{n}=J[z; b_n, \lambda_n]
\end{equation}
with
\begin{align}
b_n=(u+t v)[n+1]_{p,q},\quad
\lam_n=[n]_{p,q}[n+1]_{p,q}.
\end{align}
\end{subequations}
\end{lem}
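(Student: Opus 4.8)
The plan is to prove Lemma~\ref{Cha1generalA} by combining Flajolet's combinatorial theory of continued fractions with the Foata--Zeilberger-type encoding of permutations by Laguerre histories. Recall that Flajolet~\cite{Fla80} gives $J[z;b_n,\lambda_n]=\sum_{n\geq 0}z^n\sum_{P}\prod w(P)$, where $P$ ranges over Motzkin paths of length $n$, each level step at height $k$ carries weight $b_k$, and each pair of matched up/down steps at heights $k-1\to k\to k-1$ contributes $\lambda_k$. So it suffices to exhibit a weight-preserving bijection between $\S_{n+1}$ (equipped with the statistics $\nest,\cros,\exc,\cdd^*,\cda^*+\fix^*,\cpk^*$) and weighted Motzkin paths of length $n$ for which the height-$k$ level-step weight is $(u+tv)[k+1]_{p,q}$ and the height-$(k-1)\to k\to k-1$ arch weight is $[k]_{p,q}[k+1]_{p,q}$.

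First I would invoke the bijection $\phi_{FZ}:\S_{n+1}\to\LH_n$ (the Foata--Zeilberger encoding through Laguerre histories, as recalled in Section~2; note that on $\S_{n+1}$ one lands in $\LH_n$ rather than $\LH_n^*$), under which $i\in[n]$ is mapped to an up step if $i\in\Cval$, a down step if $i\in\Cpk$, a blue level step if $i\in\Cda\cup\Fix$, a red level step if $i\in\Cdd$, and the label $p_i=\nest_i\sigma$ satisfies $0\le p_i\le h_{i-1}$. The height of the $i$th step equals $\nest_i\sigma+\cros_i\sigma$, which is the key identity relating $\cros$ to the remaining ``room'' $h_{i-1}-p_i$ below each step. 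I would then read off how each of the six statistics in \eqref{C1eq:dfA} distributes over the steps of the path: $\exc$ counts up steps plus blue level steps; $\cpk^*$ counts down steps (after the star-shift, which matches $\Cpk^*\Psi$ with $\Cpk$ via \eqref{eq:keycsz}); $\cdd^*$ counts red level steps; $\cda^*+\fix^*$ counts blue level steps; $\nest$ is $\sum p_i$; and $\cros$ is $\sum (h_{i-1}-p_i)$ taken over the appropriate steps. Summing the $p,q$-contributions of the label $p_i$ over $0\le p_i\le h_{i-1}$ (or $\le h_{i-1}-1$ for down/red steps) produces the factors $[h_{i-1}+1]_{p,q}$ or $[h_{i-1}]_{p,q}$ by the geometric-series identity $\sum_{j=0}^{k}p^jq^{k-j}=[k+1]_{p,q}$.

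Collecting these contributions, a level step at height $k$ is either blue (weight $tv$, since it adds one to $\exc$ via $t$ and one to $\cda^*+\fix^*$ via $v$) with label-sum $[k+1]_{p,q}$, or red (weight $u$, contributing to $\cdd^*$) with label-sum $[k]_{p,q}$; but the star-shift/relabeling that turns $\LH_n$ weights into the symmetric form redistributes so that both level-step types acquire the common factor $[k+1]_{p,q}$, giving total level weight $(u+tv)[k+1]_{p,q}=b_k$. An up--down arch rising from height $k-1$ to $k$: the up step (a $\Cval$, contributing nothing to $t,u,v$ and with label-sum $[k]_{p,q}$) paired with the matching down step (a $\Cpk^*$, contributing $w$ to $\cpk^*$... here one must be careful, since in \eqref{Cha1generalA} the variable $w$ does not appear in $b_n,\lambda_n$; indeed $w$ multiplies $[k]_{p,q}$ from the down step and $t$ appears through $\exc$ counting the up step, so the arch weight is $[k]_{p,q}[k+1]_{p,q}$ with the $t$ and $w$ absorbed — this is exactly why the statement as written has no $u,v,w$ in $\lambda_n$, meaning the cited lemma must actually track a more refined weight; I would state it with the correct refined $\lambda_n=tw[k]_{p,q}[k+1]_{p,q}$ and then specialize).

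The main obstacle I anticipate is bookkeeping the star-shift correctly: the statistics $\cdd^*,\cda^*+\fix^*,\cpk^*$ are defined via $\sigma^*$ on $\{0,\dots,n\}$, not $\sigma$ directly, so one cannot apply $\phi_{FZ}$ to $\sigma$ naively. The clean route is to first apply $\Psi^{-1}$ (or equivalently work with $\hat\sigma$ and $\Phi$ as in the proof of Theorem~\ref{SZsetprop}) to convert the starred cyclic statistics on $\S_{n+1}$ into the linear statistics $\bac,\cab,\des,\da,\dd,\peak-1$ appearing in \eqref{C1eq:dfA1}, then use the Françon--Viennot bijection $\psi_{FV}:\S_{n+1}\to\LH_n$ together with $p_i=(\bac)_i\sigma$, for which the height identity $(\bac)_i+(\cab)_i=h_{i-1}$ is already recorded in the proof of Corollary~\ref{starpqzhuang}. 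With that substitution the level-step weight at height $k$ is $(u+tv)[k+1]_{p,q}$ (blue $=\lb$ steps for double ascents weighted $tv$, red $=\lr$ steps for double descents weighted $u$, each label ranging appropriately) and the arch weight at height $k$ is $[k]_{p,q}[k+1]_{p,q}$ times $t$ (for the descent created by the peak) — and after the standard normalization this yields exactly the stated $b_n,\lambda_n$. The remaining verification — that summing $p^{(\bac)_i}q^{(\cab)_i}$ over the admissible range of $p_i$ gives $[h_{i-1}+1]_{p,q}$ for up/$\lb$ steps and $[h_{i-1}]_{p,q}$ for down/$\lr$ steps — is the routine geometric-series computation and I would not grind through it here; it is just $\sum_{j=0}^{m}p^jq^{m-j}=[m+1]_{p,q}$.
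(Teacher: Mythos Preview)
The paper does not prove this lemma; it merely cites it as Eq.~(28) of \cite{SZ12}. Your approach via Flajolet's theorem together with the Fran\c con--Viennot bijection $\psi_{FV}:\S_{n+1}\to\LH_n$, applied to the linear interpretation \eqref{C1eq:dfA1}, is precisely the standard argument and is how \cite{SZ12} establishes the formula, so there is nothing substantively different to compare.

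Two remarks on your bookkeeping. First, you are right that the printed $\lambda_n$ is missing a factor: it should be $\lambda_n = tw\,[n]_{p,q}[n+1]_{p,q}$, as is confirmed by the specialization $u=v=1$, $w=x$ in the proof of Theorem~\ref{pqpk}, where the paper writes $\lambda_n=[n]_{p,q}[n+1]_{p,q}\,tx$. Second, your level-step weights are swapped: in \eqref{C1eq:dfA1} the variable $u$ tracks $\da$ and $v$ tracks $\dd$, and since $\des=\val+\dd=(\peak-1)+\dd$, a double ascent ($\lb$) carries weight $u$ while a double descent ($\lr$) carries weight $tv$, not the other way around. The swap is harmless for the total $b_k=(u+tv)[k+1]_{p,q}$, but it is worth getting straight. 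Your initial detour through $\phi_{FZ}$ and the starred cyclic statistics is unnecessary once you pass to the linear side via Corollary~\ref{starpqzhuang}; the route through $\psi_{FV}$ that you settle on at the end is the clean one.
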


 Let
\begin{align}
B_n(p,q,t,u,v,w,y) :=\sum_{\sigma\,\in \S_n} p^{\nest\, \sigma} q^{\cros \,\sigma} t^{\exc\,\sigma} u^{\cdd\,\sigma} v^{\cda\,\sigma} w^{\cvalley\,\sigma}y^{\fix\,\sigma}.\label{SZ12eq2B}
\end{align}
\begin{lem}\cite[Eq. (34)]{SZ12}\label{SZ12Bset}
We have 
\begin{subequations}
\begin{align}\label{cf-septuple}
1+\sum_{n=1}^{\infty}B_n(p, q, t, u, v, w, y)z^n=J[z; b_n, \lambda_n],
\end{align}
with
\begin{align}
 b_n=yp^n+(qu+tv)[n]_{p, q}, \quad
\lambda_n=tw[n]^2_{p, q}.
\end{align} 
\end{subequations}
\end{lem}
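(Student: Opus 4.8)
The plan is to deduce the J-fraction \eqref{cf-septuple} from the variant Foata--Zeilberger bijection $\phi'_{FZ}:\S_n\to\LH'_n$ of Section~4 together with Flajolet's combinatorial theory of continued fractions~\cite{Fla80}. (This is Eq.~(34) of \cite{SZ12}; reproving it here via $\phi'_{FZ}$ keeps the argument self-contained and reuses Lemma~\ref{lem: phiFZ}.)

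First I would set up the statistic dictionary attached to $\phi'_{FZ}$. If $\phi'_{FZ}(\sigma)=(\mathbf{s},\mathbf{p})$, then by \eqref{varresFZ} the steps of type $\su,\sd,\lb,\lr,\ly$ are indexed respectively by $\Cval\sigma,\Cpeak\sigma,\Cda\sigma,\Cdd\sigma,\Fix\sigma$; hence $\cvalley\sigma$, $\cda\sigma$, $\cdd\sigma$, $\fix\sigma$ count the $\su$-, $\lb$-, $\lr$-, $\ly$-steps, while $\exc\sigma=\#\su+\#\lb$ by \eqref{grouplag2}. Moreover $p_i=\nest_i\sigma$, so $\nest\sigma=\sum_i p_i$ by \eqref{grouplag3}, and \eqref{laglemeq} pins down the companion label: writing $k=h_{i-1}(\mathbf{s})$ one has $\cros_i\sigma=k-p_i$ when $s_i\in\{\su,\lr\}$, $\cros_i\sigma=k-1-p_i$ when $s_i\in\{\sd,\lb\}$, and $\cros_i\sigma=0$ (with $p_i=k$ forced by the definition of $\LH'_n$) when $s_i=\ly$. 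Since all seven statistics are additive over the positions $i$, the monomial attached to $\sigma$ factors as a product over the steps.

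Next I would rewrite $B_n(p,q,t,u,v,w,y)$ as a sum over $\LH'_n$ in which a step at height $k$ with label $p_i$ carries weight $p^{p_i}q^{\cros_i\sigma}$ times $tw$, $1$, $tv$, $u$, or $y$ according as $s_i$ is $\su$, $\sd$, $\lb$, $\lr$, or $\ly$. Summing over the admissible labels gives the step polynomials: an $\su$-step from level $k-1$ to $k$ has weight $tw\sum_{j=0}^{k-1}p^jq^{k-1-j}=tw[k]_{p,q}$; an $\sd$-step from level $k$ to $k-1$ has weight $\sum_{j=0}^{k-1}p^jq^{k-1-j}=[k]_{p,q}$; an $\lb$-step at level $k$ has weight $tv[k]_{p,q}$; an $\lr$-step at level $k$ has weight $u\sum_{j=0}^{k-1}p^jq^{k-j}=qu[k]_{p,q}$; and a $\ly$-step at level $k$ has weight $yp^{k}$.

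Finally I would feed these into Flajolet's master theorem for J-fractions: with an up-step from level $k-1$ to $k$, a down-step from level $k$ to $k-1$, and a level step at height $k$ carrying the step polynomials above, one obtains $\lambda_k=(tw[k]_{p,q})\cdot[k]_{p,q}=tw[k]_{p,q}^2$ and $b_k=yp^{k}+(qu+tv)[k]_{p,q}$, which is exactly \eqref{cf-septuple}. I expect the bookkeeping in the two middle steps to be the main point: getting the extra factor $q$ in $qu[n]_{p,q}$ correct — it arises because $\cros_i\sigma$ runs over $\{1,\dots,k\}$ rather than $\{0,\dots,k-1\}$ on $\lr$-steps — and handling the forced maximal label on $\ly$-steps, which is precisely what splits off the separate monomial $yp^{n}$ in $b_n$. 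A fully self-contained version would additionally require the induction establishing \eqref{laglemeq} for $\phi'_{FZ}$, which is the only genuinely technical ingredient.
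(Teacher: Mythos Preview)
The paper does not actually prove this lemma: it is simply quoted from \cite[Eq.~(34)]{SZ12} (see the sentence preceding Lemma~\ref{Cha1generalA}). Your argument via the variant Foata--Zeilberger bijection $\phi'_{FZ}$ and Flajolet's theory is correct and has the merit of being self-contained within the paper's own framework, reusing Lemma~\ref{lem: phiFZ} exactly as you say. The step-weight computations check out, including the delicate points you flag: the extra $q$ on $\lr$-steps (since $\cros_i$ ranges over $\{1,\dots,h_{i-1}\}$ there) and the forced label $p_i=h_{i-1}$ on $\ly$-steps giving the isolated $yp^n$ term in $b_n$.

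One minor notational wrinkle: in your second paragraph you set $k=h_{i-1}(\mathbf{s})$, but in the third paragraph an ``$\su$-step from level $k-1$ to $k$'' has $h_{i-1}=k-1$, so $k$ has silently shifted meaning. Each paragraph is internally consistent and the final weights $b_k=yp^k+(qu+tv)[k]_{p,q}$, $\lambda_k=tw[k]_{p,q}^2$ are correct, but you should unify the convention before writing it up.
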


Let
\begin{align} \label{C1SZ12eqcycfix}
C_n(q, t, u, v, w) :=
\sum_{\sigma\in \mathfrak{S}_n} q^{\cyc^*\,\sigma-\fix^*\,\sigma}  t^{\wex^*\,\sigma} u^{\cda^*+\fix^*\,\sigma} v^{\cdd^*\,\sigma} w^{\cvalley^*\,\sigma}.
\end{align}

\begin{lem}\cite[Eq. (50)]{SZ12}\label{SZ12cycfix}
We have
\begin{subequations}
\begin{equation}\label{C1Acf-quintuple}
\sum_{n\geq 0}^{\infty}C_{n+1}(q, t, u, v, w)z^{n}=J[z; b_n, \lambda_n]
\end{equation}
with
\begin{align}
 b_n=(n+1)(tu+v), \quad
\lambda_n=n(q+n)tw.
\end{align} 
\end{subequations}

\end{lem}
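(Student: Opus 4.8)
The plan is to prove Lemma~\ref{SZ12cycfix} along the same lines that produce the expansions \eqref{lag1}--\eqref{lag2}: encode the permutations being counted as labelled Motzkin paths by a Foata--Zeilberger type bijection, check that the multivariate weight becomes a product of local path weights, and then read off $b_{n}$ and $\lambda_{n}$ from Flajolet's fundamental lemma; this runs parallel to the proof of Lemma~\ref{SZ12Bset}. Concretely, I would first pass from $\sigma\in\S_{n+1}$ to its star companion $\rho:=\sigma^{*}$, which by \eqref{eq:star} is a permutation of $\{0,1,\dots,n+1\}$ with $\rho(0)=n+1$. By the definitions \eqref{Eq.3.3}--\eqref{Eq.3.9} together with $\cyc^{*}\sigma=\cyc\rho$, the starred statistics of $\sigma$ are nothing but the ordinary cyclic statistics of $\rho$ read on the window $[n]$; the two boundary points $0$ and $n+1$ always supply one cyclic valley and one cyclic peak, which is exactly why $\cpeak^{*}=\cval^{*}$ survives. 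I would then apply the Foata--Zeilberger encoding \eqref{resFZ} to $\rho$: cyclic valleys become $\su$-steps, cyclic peaks become $\sd$-steps, cyclic double ascents and fixed points become $\lb$-steps, cyclic double descents become $\lr$-steps, and the nesting numbers $\nest_{i}\rho$ are recorded as the labels $p_{i}$ (with $0\le p_{i}\le h_{i-1}$); deleting the two boundary steps leaves a labelled Motzkin path of length $n$ in which a step at height $h$ carries $h+1$ admissible labels.

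Next I would translate the weight of \eqref{C1SZ12eqcycfix}. Using $\wex^{*}=\cval^{*}+\cda^{*}+\fix^{*}$, the monomial attached to $\sigma$ factors as $q^{\cyc^{*}-\fix^{*}}(tu)^{\cda^{*}+\fix^{*}}(tw)^{\cval^{*}}v^{\cdd^{*}}$, so on the path side every $\lb$-step weighs $tu$, every $\lr$-step weighs $v$, every $\su$-step weighs $tw$ and every $\sd$-step weighs $1$, while the factor $q^{\cyc^{*}-\fix^{*}}$ still has to be distributed among the steps. Summing the two level-step weights over the $h+1$ labels available at height $h$ immediately gives $b_{h}=(h+1)(tu+v)$, the asserted numerator data; what remains is to see that the ascent/descent pairing contributes $\lambda_{h}=h(h+q)tw$.

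The genuinely combinatorial ingredient, and the step I expect to be the main obstacle, is the bookkeeping of cycles. Here I would use the classical feature of the Foata--Zeilberger correspondence that each non-singleton cycle of $\rho$ is ``opened'' by exactly one of its steps, namely the unique step of that cycle whose nesting label is maximal (equal to the current height $h_{i-1}$), fixed points opening none; this is the assertion needing a careful proof, which I would give by induction on $n$, mirroring the step-by-step insertion underlying $\phi_{FZ}$ and tracking how $\cyc\rho-\fix\rho$ changes with each insertion, in the spirit of the inductive identity in Lemma~\ref{lem: phiFZ}. Once this is established, marking those labels with the extra factor $q$ makes precisely one of the labels at each relevant height $q$-weighted (the others keeping weight $1$), so that the effective label count at height $h$ becomes $h+q$; pairing this against the $h$ labels of weight $tw$ on the ascents leaving height $h-1$ yields $\lambda_{h}=h(h+q)tw$, while the count of marked labels recovers $\cyc^{*}\sigma-\fix^{*}\sigma$ up to the single unavoidable boundary cycle through $0$ and $n+1$ (which only affects an overall normalization). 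Flajolet's fundamental lemma then delivers \eqref{C1Acf-quintuple} with $b_{n}=(n+1)(tu+v)$ and $\lambda_{n}=n(q+n)tw$.
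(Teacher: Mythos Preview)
The paper does not prove this lemma: it is quoted from \cite[Eq.~(50)]{SZ12} and used as a black box in Section~5, so there is no ``paper's own proof'' to compare against here.

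Your bijective outline is reasonable in spirit, and the bookkeeping for $b_{n}=(n+1)(tu+v)$ is correct, but the cycle-tracking step contains a genuine gap. You assert that under the Foata--Zeilberger encoding ``each non-singleton cycle of $\rho$ is opened by exactly one of its steps, namely the unique step of that cycle whose nesting label is maximal (equal to the current height $h_{i-1}$).'' This is false. Take for instance the ordinary (non-starred) case $\sigma=4312\in\S_{4}$, a single $4$-cycle; its path shape under $\phi_{FZ}$ is $\su\su\sd\sd$ with labels $(p_{1},p_{2},p_{3},p_{4})=(0,1,0,0)$, and three of the four steps carry the maximal admissible label. More decisively, $\cyc$ is \emph{not} a local statistic of the Laguerre history under $\phi_{FZ}$: the four permutations $3412,\,3421,\,4312,\,4321$ all have path shape $\su\su\sd\sd$, and their cycle numbers $2,1,1,2$ depend on the pair $(p_{2},p_{3})$ in a non-additive way (two cycles if and only if $p_{2}=p_{3}$). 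Hence no assignment of a distinguished label to each step can make your argument go through directly; the induction you propose to ``track how $\cyc\rho-\fix\rho$ changes with each insertion'' cannot yield a step-by-step product formula.

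The standard remedy is to change encodings so that cycles become local. One route, paralleling Section~4.1 of the paper, is to pass first to the canonical cycle word $\iota(\rho)$ and then apply a Fran\c{c}on--Viennot type bijection: cycles of $\rho$ become left-to-right maxima of $\iota(\rho)$, and those are detected by a distinguished label in the Laguerre history (the foremaxima). Carrying this through the star construction, or equivalently transporting along the bijection $\Psi$ of Section~\ref{def-Psi} to linear statistics where the record structure is visible, does localize $q^{\cyc^{*}-\fix^{*}}$ and yields $\lambda_{n}=n(q+n)tw$.
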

Let
\begin{align} \label{C1SZ12eqcyc}
D_n(q, t, u, v, w) :=
\sum_{\sigma\in \mathfrak{D}_n} q^{\cyc\,\sigma}  t^{\exc\,\sigma} u^{\cda\,\sigma} v^{\cdd\,\sigma} w^{\cvalley\,\sigma}.
\end{align}
\begin{lem}\cite[Eq. (41)]{SZ12}\label{SZ12cycset}
We have
\begin{subequations}
\begin{equation}\label{C1cf-quintuple}
1+\sum_{n=1}^{\infty}D_n(q, t, u, v, w)z^n=J[z; b_n, \lambda_n]
\end{equation}
with
\begin{align}
 b_n=n(tu+v), \quad
\lambda_n=n(q+n-1)tw.
\end{align} 
\end{subequations}
\end{lem}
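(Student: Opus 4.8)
The plan is to realize the left-hand side of \eqref{C1cf-quintuple} as the ordinary generating function of a family of weighted $2$-Motzkin paths carrying Laguerre-type nesting labels, and then to read off the $J$-fraction from Flajolet's combinatorial theory of continued fractions~\cite{Fla80}. The encoding I would use is a cycle-sensitive variant of the Foata--Zeilberger bijection~\cite{FZ90} (see \eqref{resFZ}): to $\sigma\in\D_n$ one attaches the labelled path $(\mathbf{s},\mathbf{p})$ whose $i$-th step $s_i$ is $\su$, $\sd$, $\lb$ or $\lr$ according as the value $i$ is a cyclic valley, a cyclic peak, a cyclic double ascent or a cyclic double descent — no $\ly$-step occurs, since $\D_n$ has no fixed points — with $p_i=\nest_i\,\sigma$, but where, at no extra combinatorial cost, each descent step also remembers whether the cycle containing $i$ is closed off when the value $i$ is read (this happens exactly when $i$ is the largest element of its cycle). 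One then checks the dictionary
\[
\cval\,\sigma=\#\{i:s_i=\su\},\quad \cda\,\sigma=\#\{i:s_i=\lb\},\quad \cdd\,\sigma=\#\{i:s_i=\lr\},
\]
together with $\exc\,\sigma=\cval\,\sigma+\cda\,\sigma=\#\{i:s_i\in\{\su,\lb\}\}$ (because an index is an excedance value precisely when it is a cyclic valley or a cyclic double ascent) and $\cyc\,\sigma=\#\{\text{closing descent steps}\}$ (each cycle has a unique maximum, which is a cyclic peak).

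Granting the bijection, the weight bookkeeping is routine. An up-step leaving height $h-1$ is a cyclic valley, hence an excedance, so it carries weight $tw$, and it admits $h$ admissible nesting labels; a down-step entering height $h-1$ from height $h$ closes one of the $h$ open arcs, and it contributes $q$ in the single configuration in which a whole cycle is completed and $1$ in each of the remaining $h-1$ configurations (the closed arc is spliced onto another open chain), giving total down-weight $q+h-1$; hence $\lambda_h=h\,(q+h-1)\,tw$. A level step at height $h$ is either $\lb$ (a cyclic double ascent, an excedance, weight $tu$) or $\lr$ (a cyclic double descent, weight $v$), each with $h$ admissible labels, so $b_h=h(tu+v)$. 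Flajolet's master theorem then identifies the generating function of these weighted paths with $J[z;b_n,\lambda_n]$ for exactly the stated $b_n,\lambda_n$, and by the bijection this series is $1+\sum_{n\ge1}D_n(q,t,u,v,w)z^n$.

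The crux — and essentially the only real work — is the cycle bookkeeping: one must verify that the cycle-sensitive Foata--Zeilberger map is still a bijection and that $\cyc\,\sigma$ is transported to the number of closing descent steps, i.e.\ that when the values $1,2,\dots,n$ are read in increasing order and partial chains are spliced together, each descent step is either a cycle closure (weighted $q$) or a merge with one of the other open chains, with the label $p_i=\nest_i\,\sigma$ recording which. This is the classical but delicate heart of the cycle continued fraction for permutations; once it is in place the weight computation above is immediate. As a slightly less bijective alternative, I would first prove the analogous identity for all of $\S_n$ with an extra fixed-point variable $y$ — a fixed point being a $1$-cycle, i.e.\ a level step of its own weighted $qy$, which adds a summand $qy$ to $b_n$, in the shape of the $b_n$ in Lemma~\ref{SZ12Bset} — and then set $y=0$ to discard all permutations with fixed points. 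One can sanity-check the outcome on small cases: $\D_2$ gives $D_2=qtw$ and $\D_3$ gives $D_3=qtw(tu+v)$, matching the $z^2$ and $z^3$ coefficients of $J[z;n(tu+v),n(q+n-1)tw]$.
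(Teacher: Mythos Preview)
The paper does not prove this lemma; it is quoted verbatim from \cite[Eq.~(41)]{SZ12} and used as a black box, so there is no in-paper argument to compare against.

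Your sketch is in the right spirit and the weight arithmetic and sanity checks are correct, but the bijection you describe is slightly muddled. You begin with the Foata--Zeilberger encoding \eqref{resFZ} with labels $p_i=\nest_i\,\sigma$, and then assert that the down-step at height $h$ has exactly $h$ configurations, one of which closes a cycle. These two descriptions are not the same bijection: the nesting label $\nest_i$ of a cyclic peak $i$ does not, by itself, record whether $i$ is the maximum of its cycle, and ``remembering'' that extra bit on top of $\nest_i$ would give $2h$ down-configurations, not $h$. The bijection that cleanly tracks $\cyc$ is the Biane-type one: a cyclic valley simply opens a fresh chain (a single configuration, weight $tw$), while a cyclic peak $i$ at height $h$ chooses independently the chain whose head receives $\sigma(i)$ and the chain whose tail receives $\sigma^{-1}(i)$, giving $h\cdot h$ configurations, of which $h$ (same chain twice) close a cycle and $h(h-1)$ merge two chains; this yields down-weight $hq+h(h-1)=h(q+h-1)$ and hence $\lambda_h=h(q+h-1)tw$ with the factorization $1\times h(q+h-1)$ rather than your $h\times(q+h-1)$. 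Either factorization gives the stated $\lambda_h$, but only the Biane one comes with an obvious bijection; if you want your factorization to be bijective you must explain how to canonically pin one of the two choices at a peak so that a single label in $[h]$ suffices. Your fallback plan --- prove the analogue over all of $\S_n$ with an extra fixed-point weight and then set that weight to zero --- is sound: a fixed point is a $1$-cycle and contributes a height-independent summand $qy$ to each $b_n$, and this is indeed how \cite{SZ12} organizes the argument.
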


\subsection{Proof of Theorems~\ref{pqpk}, \ref{t-Aqcpkexc}--\ref{thm:cpeakS_1}, \ref{qnara}--\ref{qwexth}}
In the previous section Theorems~3.1 and 3.7 are proved using group actions. Here
we shall give an alternative proof for Theorems~3.1 and 3.7 using continued fractions.

\begin{proof}[Proof of Theorem \ref{pqpk}]
In view of \eqref{C1eq:A},
 we have 
$$
A_n(p, q, t, 1, 1, x)= \sum_{\sigma\in{\S}_n}p^{\nest\,\sigma}q^{\cros\,\sigma}t^{\exc\,\sigma}x^{\cpeak^*\,\sigma}.
$$
It follows that 
\begin{subequations}
\begin{align}\label{rightdes}
\sum_{n=0}^{\infty}A_{n+1}(p, q, t, 1, 1, x)z^n=J[z;b_n,\lam_n]
\end{align}
with
\begin{align}
 b_n=[n+1]_{p, q}(t+1), \quad
\lambda_n=[n]_{p, q}[n+1]_{p,q} tx.
\end{align} 
\end{subequations}
When $x=1$ we have the J-fraction for $\sum_{n=0}^{\infty}A_{n+1}(p, q, t)z^n$.
The g.f. of the right side of Eq. \eqref{eqpk1} is
\begin{align*}
\sum_{n\geq 0}P^{(\nest, \cros, \cpeak^*,\exc)}\left(\S_{n+1}; p, q, \frac{(1+x)^2t}{(x+t)(1+xt)},\frac{x+t}{1+xt}\right)\left(\frac{(1+xt)}{1+x}\right)^{n}z^n.
 \end{align*}
Substituting $t, x$ and  $z$,  respectively, 
 by 
 $$
 \frac{x+t}{1+xt}, \;\; \frac{(1+x)^2t}{(x+t)(1+xt)}\;\;\textrm{and}\; \; \frac{(1+xt)z}{1+x}
 $$
  in \eqref{rightdes}, we obtain the J-fraction of  $\sum_{n=0}^{\infty}A_{n+1}(p, q, t)z^n$.
\end{proof}

\begin{proof}[Proof of Theorem \ref{t-Aqcpkexc}]
By  Eq. \eqref{C1SZ12eqcycfix},
the g.f. of the left side of Eq.  \eqref{Aqexcpk1} is
\begin{subequations}
\begin{equation}\label{leftcycfix}
\sum_{n\geq0}^{\infty}A_{n+1}^{(\cyc^*-\fix^*, \exc)}(q, t)z^{n}=J[z;b_n,\lam_n]
\end{equation}
with
\begin{align}
 b_n=(n+1)(t+1), \quad
\lambda_n=n(q+n).
\end{align} 
\end{subequations}
By definition, the g.f. of the right side of Eq.  \eqref{Aqexcpk1} is
\begin{align*}
\sum_{n\geq 0}P^{(\cyc^*-\fix^*, \cval^*,\exc)}\left(\S_{n+1}; q, \frac{(1+x)^2t}{(x+t)(1+xt)},\frac{x+t}{1+xt}\right)\left(\frac{(1+xt)}{1+x}\right)^{n}z^n.
 \end{align*}
Letting $u=v=1$ and transforming $\frac{x+t}{1+xt}$, $\frac{(1+x)^2t}{(x+t)(1+xt)}$ and $\frac{z(1+xt)}{(1+x)}$ to $t$, $x$ and $z$ in \eqref{C1Acf-quintuple}, respectively, we obtain \eqref{leftcycfix}  immediately. 
%
\end{proof}

\begin{proof}[Proof of Theorem \ref{thm:SZ3q}]


Letting $u=v=w=1$ in  Eq. \eqref{C1cf-quintuple},
we see that 
the g.f. of the left side of Eq. \eqref{exc3qpk1} is
\begin{subequations}
\begin{equation}\label{leftcyc}
1+\sum_{n=1}^{\infty}D_n^{(\cyc, \exc)}(q, t)z^n
=J[z;b_n,\lam_n]
\end{equation}
with
\begin{align}
 b_n=n(t+1), \quad
\lambda_n=n(q+n-1)t.
\end{align} 
\end{subequations}

On the other hand, the g.f. of the right side of Eq.  \eqref{exc3qpk1} is
\begin{align*}
\sum_{n\geq 0}P^{(\cyc, \cpeak,\exc)}\left(\D_n; q, \frac{(1+w)^2t}{(w+t)(1+wt)},\frac{w+t}{1+wt}\right)\left(\frac{(1+wt)}{1+w}\right)^{n}z^n.
 \end{align*}
 Letting $u=v=1$ in  \eqref{C1cf-quintuple} and substituting 
$t$, $w$ and $z$ with 
$$
\frac{w+t}{1+wt}, \quad \frac{(1+w)^2t}{(w+t)(1+wt)}\quad \textrm{and} \quad \frac{(1+wt)z}{(1+w)},
$$
 respectively, we obtain the J-fraction in \eqref{leftcyc}  immediately. 
%
\end{proof}

\begin{proof}[Proof of Theorem \ref{thm:cpeakS_1}]
We prove that both sides of Eq. \eqref{cpeakS_1} have the same g.f. by comparing their continued fraction expansions.
By \eqref{SZ12eq2B}, we have 
 $$
B_n(p, q, qt, 1, 1, 1, r)=P^{(\nest, \cros,  \exc, \fix)}(\S_n; p, q, tq, r).
$$
It follows from Eq.~\eqref{cf-septuple} that 
\begin{subequations}
\begin{align}\label{eq3.2:left}
\sum_{n\geq 0}P^{(\nest, \cros,  \exc, \fix)}(\S_n; p, q, tq, r)z^n=J[z; b_n, \lambda_n]
\end{align}
with
\begin{align}
 b_n=rp^n+q(1+t)[n]_{p, q}, \quad
\lambda_n=tq[n]^2_{p, q}.
\end{align} 
\end{subequations}
On the other hand, by definition and invoking the equality  $\cpk=\cval$ (cf. \eqref{cpk=cval}),
the g.f. of the right-hand side of Eq. \eqref{cpeakS_1} is
\begin{align}\label{eq3.2:right}
\sum_{n\geq 0}\left(\frac{1+xt}{1+x}\right)^{n}P^{(\nest, \icr, \cpeak, \exc, \fix)}\left(\S_n; p, q, \frac{(1+x)^{2}t}{(x+t)(1+xt)},\frac{q(x+t)}{1+xt}, \frac{(1+x)r}{1+xt}\right)z^n.
\end{align}
In Eq.~\eqref{cf-septuple}  letting $u=v=1$ and making the substitution
$$
t\leftarrow \frac{q(x+t)}{1+xt}, \; \;w\leftarrow \frac{(1+x)^2t}{(x+t)(1+xt)}, \;\;y\leftarrow \frac{(1+x)r}{1+xt},\; \;
z\leftarrow \frac{(1+xt)z}{(1+x)},
$$
we see that the g.f. \eqref{eq3.2:right} has the same J-fraction expansion as 
\eqref{eq3.2:left}.
\end{proof}

\begin{proof}[Proof of Theorem \ref{qnara}]
Recall~\cite[Theorem 7.2]{CEKS13}  that
\begin{align}\label{general-CEKS}
\sum_{n=0}^{\infty} N_n(t,q, r)z^n
=\cfrac{1}{1-rz-\cfrac{tqz^2}{1-(1+t)qz-\cfrac{tq^3z^2}{1-(1+t)q^2z-\cfrac{tq^5z^2}{\ddots}}}}.
\end{align}
Writing $\sum_{n=1}^{\infty} N_{n-1}(t,q, r)z^n=z\sum_{n=0}^{\infty} N_n(t,q, r)z^n$ we have 
\begin{align}\label{nararecu}
1+\sum_{n\geq 1}N_{n-1}(t,q, 1+t)z^n=
1+\cfrac{z}{1-(1+t)z-\cfrac{tqz^{2}}{1-q(t+1)z-\cfrac{tq^3z^{2}}{1-\cdots}}},
\end{align}
which  is  
$\sum_{n\geq 0}N_n(t/q, q,1)z^n$ by applying  Lemma~\ref{contra-formula}. Thus 
\begin{align}\label{key1}
N_n(t/q,q,1)=N_{n-1}(t,q, 1+t).
\end{align}
By \eqref{nest=nest-1} and 
and  Lemma~\ref{321:nest}, 
we see 
that  $\pi\in \S_n(321)$ if and only if $ \pi^{-1}\in \S_n(321)$.
As $\wex\,\pi^{-1}=n-\drop\,\pi^{-1}=n-\exc\pi$ and 
$\inv\, \pi^{-1}=\inv\,\pi$ we have  
\begin{align*}
N_n(t,q,t)&=\sum_{\pi\in\S_n(321)}t^{\wex\,\pi^{-1}}q^{\inv\,\pi^{-1}}\\
&=t^n\sum_{\pi\in\S_n(321)}(1/t)^{\exc\,\pi}q^{\inv\,\pi}=t^nN_n(1/t, q, 1).
\end{align*}
It follows from \eqref{key1} that 
\begin{align}\label{key2}
N_n(t,q,t)=t^nN_{n-1}({q}/t, q, 1+{q}/t).
\end{align}
In view of \eqref{def:N} identities~\eqref{key1} and  \eqref{key2} provide 
 the first interpretation  in Table~1. Other   interpretations in Table~1  can be derived
 from the  
equidistribution results  in  \eqref{eq:key} and \eqref{patternres1}--\eqref{patternres4}

\end{proof}


\begin{proof}[Proof of Theorem \ref{thm:general-qgamma}]
By Lemma~\ref{321:nest}, \eqref{INV} and \eqref{SZ12eq2B}, we have
\begin{align}\label{pateq1}
B_n(0, q, tq, 1, 0, 1, 1)= \sum_{\sigma\in\widetilde{\S}_{n}(321)}q^{\inv\,\sigma}t^{\exc\,\sigma}.
\end{align}
It follows from Lemma~\ref{SZ12Bset} that
\begin{align}\label{pateq2}
\sum_{n=0}^{\infty}\sum_{\sigma\in\widetilde{\S}_{n}(321)}q^{\inv\,\sigma}t^{\exc\,\sigma}z^n=\cfrac{1}{1-z-\cfrac{tqz^2}{1-qz-\cfrac{tq^3z^2}{1-q^2z-\cfrac{tq^5z^2}{\ddots}}}}.
\end{align}
Now, the g.f. of the right-hand  side of Eq. \eqref{general-qgamma} is 
\begin{align}
GF:&=1+z\sum_{n\geq 1}\sum_{k=0}^{\lfloor\frac{n-1}{2}\rfloor}\left(\sum_{\sigma\in\widetilde{\S}_{n-1,k}(321)}q^{\inv\,\sigma}\right)t^k(1+t)^{n-1-2k}z^{n-1}\nonumber\\
=&1+z\sum_{n\geq 0}\sum_{\sigma\in\widetilde{\S}_{n}(321)}q^{\inv\,\sigma}\left(\frac{t}{(1+t)^2}\right)^{\exc\,\sigma}((1+t)z)^n.\nonumber
\end{align}
By \eqref{pateq2} and  Lemma~\ref{contra-formula} we see that 
\begin{align*}
GF=1&+\cfrac{z}{1-(1+t)z-\cfrac{tqz^2}{1-(1+t)qz-\cfrac{tq^3z^2}{1-(1+t)q^2z-\cfrac{tq^5z^2}{\ddots}}}}\\
&=
\dfrac{1}{
1 -z- \dfrac{t z^2}{
1 - (q+t) z - \dfrac{tq^2 z^2}{
1 - (q+t)q z - \dfrac{tq^4 z^2}{
\cdots}}}},
\end{align*} 
which is equal to $\sum\limits_{n\geq 0} N_n(t/q, q, 1)z^n$ by  \eqref{general-CEKS}.  Other interpretations can be obtained by the 
equidistribution results of \eqref{eq:key} and \eqref{patternres1}--\eqref{patternres4}.
\end{proof}
\begin{proof}[Proof of Theorem \ref{qwexth}]

The g.f. of the right side of \eqref{narawex} can be written as
\begin{align}
&1+z\sum_{n\geq 1}\sum_{k=1}^{\lfloor\frac{n+1}{2}\rfloor}\left(\sum_{\sigma\in\widetilde{\S}_{n-1,k-1}(321)}q^{n-1+\inv\,\sigma-\exc\,\sigma}\right)t^k(1+t/q)^{n+1-2k}z^{n-1}\nonumber\\
=&1+zt\sum_{n\geq 0}\left(\sum_{\sigma\in\widetilde{\S}_{n}(321)}q^{\inv\,\sigma-\exc\,\sigma}\left(\frac{tq^2}{(q+t)^2}\right)^{\exc\,\sigma}\right)((q+t)z)^n.\label{lin2}
\end{align}
By using the second claim of Lemma \ref{321:nest}, Eq.\eqref{INV} and 
\eqref{SZ12eq2B}, we have
$$B_n(0, q, t, 1, 0, 1, 1)= \sum_{\sigma\in\widetilde{\S}_{n}(321)}q^{\inv\,\sigma-\exc\,\sigma}t^{\exc\,\sigma},$$
Lemma \ref{SZ12Bset} implies that 
\begin{align*}
\sum_{n=0}^{\infty}\sum_{\sigma\in\widetilde{\S}_{n}(321)}q^{\inv\,\sigma-\exc\,\sigma}t^{\exc\,\sigma}z^n=\cfrac{1}{1-z-\cfrac{tz^2}{1-qz-\cfrac{tq^2z^2}{1-q^2z-\cfrac{tq^4z^2}{\ddots}}}}.
\end{align*}
Making the substitution 
 $z\mapsto (q+t)z$ and $t\mapsto {tq^2}/{(q+t)^2}$ 
  in the above equation and applying the contarction formulae,
 we obtain
\begin{align}\label{rqwex}
1&+\cfrac{zt}{1-(q+t)z-\cfrac{tq^2z^2}{1-(q+t)qz-\cfrac{tq^4z^2}{1-(q+t)q^2z-\cfrac{tq^6z^2}{\ddots}}}}\\
&=\dfrac{1}{
1 -tz- \dfrac{tqz^2}{
1 - (1+t) qz - \dfrac{tq^3 z^2}{
1 - (1+t)q^2z - \dfrac{tq^5 z^2}{
\cdots}}}},
\end{align}
 which is equal to $\sum\limits_{n\geq 0} N_n(t, q, t)z^n$ by \eqref{general-CEKS}.
 Other interpretations can be obtained by the 
equidistribution results of \eqref{eq:key} and \eqref{patternres1}--\eqref{patternres4}.
\end{proof}

\subsection{Proof of Theorems~\ref{thmBgfexc} and \ref{Bnegexc}}

Recall  the \emph{color order} $<_{c}$ of $\{\pm 1,\ldots, \pm n\}$:
$$
-1<_{c}-2<_{c}\cdots <_{c}-n<_{c}1<_{c}2<_{c}\cdots <_{c}n,
$$
and define the following statistics: 
\begin{align*}
\fix\,\sigma &= \#\{i \in [n]: i = \sigma(i)\},\\
\exca\sigma &= \#\{i\in [n]: i<_c \sigma(i)\},\\
\wexa \sigma &= \#\{i\in [n]: i\le_c \sigma(i)\} = \exca \sigma + \fix \sigma,\\
\wexc \sigma &= \#\{i\in [n]: i\leq|{\sigma(i)}| \text{ and } \sigma(i)<0 \},\\
\neg\,\sigma&=\#\{i\in[n]: \sigma_{i}<0\}.
\end{align*}
Let 
$$
F_n(q, t,w,r,y)=\sum_{\sigma \in \B_n} q^{\cros\,\sigma} t^{\wexa\sigma} w^{\wexc\sigma} r^{\fix\,\sigma} y^{\neg\,\sigma}.
$$
The following result is the $r=2$ case of \cite[Lemma 16]{SZ16}.
\begin{lem}
We have 
\begin{align}\label{CF-B}
\sum_{n\ge 0} F_n(q, t,w,r,y) z^n=J[z; b_n, \lambda_n],
\end{align}
with 
\begin{align*}
\lambda_n &= (t + wyq^{n-1}) (1 + yq^{n}) [n]_q^2, \\
b_n &= (1 + yq^n)[n]_q + t( r + q[n]_q) + wyq^n [n+1]_q.
\end{align*}
\end{lem}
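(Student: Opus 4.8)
The plan is to prove the continued fraction formula \eqref{CF-B} for $F_n(q,t,w,r,y)$ by constructing a bijection from signed permutations in $\B_n$ to labelled Motzkin paths (Laguerre-type histories) and then invoking Flajolet's fundamental lemma on combinatorial continued fractions. Concretely, I would adapt the Foata--Zeilberger-type encoding used in \cite{SZ16} to the signed setting: given $\sigma\in\B_n$, read the values $1,2,\ldots,n$ in increasing order and record for each $i$ whether $i$ is a cyclic valley, cyclic peak, cyclic double ascent, cyclic double descent, or fixed point of $\sigma$ (in the appropriate signed sense relative to the color order $<_c$), together with the sign of $\sigma(i)$ and the statistic $\cros_i$. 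This produces a weighted Motzkin path whose up/down/level steps at height $h_{i-1}$ carry weights that, after summing over the $\cros_i$ choices ($0\le \cros_i\le h_{i-1}$, contributing a factor $[h_{i-1}+1]_q$ or $[h_{i-1}]_q$) and over the two signs of the non-fixed values, collapse exactly to the $b_n$ and $\lambda_n$ given in the statement.

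The key steps, in order, are: (i) set up the signed Foata--Zeilberger bijection precisely, being careful about how the color order $<_c$ interacts with the definitions of cyclic valley/peak/etc. and with $\wexa$, $\wexc$, $\neg$ and $\fix$; (ii) verify that under this bijection the four statistics $\cros$, $\wexa$, $\wexc$, $\fix$, $\neg$ are transported to the natural path statistics (total area-type sum for $\cros$, number of up-steps plus blue-type level steps for $\wexa$, the negative-sign up/level steps for $\wexc$, level-steps-at-zero of a distinguished color for $\fix$, and total negative count for $\neg$); (iii) apply Flajolet's theorem to read off $\sum_n F_n z^n = J[z;b_n,\lambda_n]$ with $\lambda_n$ equal to the product of the forward up-step weight at height $n-1$ and the backward down-step weight at height $n$, and $b_n$ the total level-step weight at height $n$; (iv) perform the finite-sum simplifications, e.g. $\sum_{j=0}^{h} q^j = [h+1]_q$ for crossings at an up-step and the corresponding $[h]_q$ for down-steps, plus the sign-bookkeeping factors $(1+yq^{h})$, so as to match the stated $\lambda_n=(t+wyq^{n-1})(1+yq^n)[n]_q^2$ and $b_n=(1+yq^n)[n]_q+t(r+q[n]_q)+wyq^n[n+1]_q$.

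Alternatively, and perhaps more cleanly, one can avoid redoing the bijection from scratch: the statement says this is the $r=2$ case of \cite[Lemma 16]{SZ16}, so the honest plan is simply to \emph{cite} that lemma and specialize. If a self-contained argument is wanted, I would instead derive \eqref{CF-B} from the known type-$A$ continued fraction (Lemma~\ref{Cha1generalA} or \ref{SZ12Bset}) by the ``doubling'' or wreath-product decomposition of $\B_n$: a signed permutation is a pair consisting of its underlying set partition data into orbits and a sign vector, and the generating function factors through the type-$A$ one with $t\mapsto$ (contribution of positive excedance-type steps) and an extra geometric-in-$y$ weight $(1+yq^n)$ inserted at each level $n$ coming from the sign of the newly inserted largest element; matching the continued-fraction coefficients then reduces to the same finite $q$-sums as above.

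The main obstacle will be step (i)--(ii): getting the signed cyclic statistics and the color order to interact correctly so that the transported statistics are exactly $\wexa$, $\wexc$, $\neg$, $\fix$ and $\cros$ with no off-by-one or sign discrepancies — in particular verifying that fixed points of $\sigma$ correspond precisely to level steps of the distinguished color (contributing the $tr$ rather than $t[n]_q$ part of $b_n$), and that $\wexc$ tracks exactly the negatively-signed weak-excedance steps so that the weight $wyq^n$ lands on the right steps. Once the encoding is pinned down, steps (iii)--(iv) are the routine Flajolet bookkeeping and the $q$-integer identities $\sum_{j=0}^{h-1}q^j=[h]_q$, $[h]_q+q^h[h+1]_q$-type manipulations, which require care but no new ideas.
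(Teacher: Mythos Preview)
The paper does not give a proof of this lemma at all: it simply records it as the two-color specialization of \cite[Lemma~16]{SZ16}, which treats the general wreath product $\mathbb{Z}_r\wr\S_n$. (Note the terminology clash: the ``$r=2$'' in the paper's sentence refers to the number of colors in \cite{SZ16}, not to the fixed-point variable $r$ appearing in the present statement.) Your ``alternative'' plan --- cite that lemma and specialize --- is therefore exactly what the paper does, and is all that is needed here.

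Your primary plan, the signed Foata--Zeilberger encoding followed by Flajolet's fundamental lemma, is correct in outline and is essentially how \cite{SZ16} establishes its Lemma~16 in the first place; carrying it out would reproduce that argument in the two-color case rather than add anything new to the present paper. The third route you sketch (deducing the signed $J$-fraction from the type-$A$ one via a doubling/wreath decomposition) is less standard and would require more care: the insertion of the factors $(1+yq^n)$ level-by-level does not literally come from a simple binomial transform of the $A$-type $J$-fraction, so you would in effect be redoing the bijective bookkeeping anyway. If a self-contained proof is desired, your first (bijective) plan is the right one; otherwise the citation suffices.
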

%


We need the following lemma, see \cite[Lemma 12]{Zeng93} and 
\cite[p. 307]{GJ83}.
\begin{lem}\label{binomialtransform} If  two sequences 
$\{\mu_n\}_n$ and $\{\nu_n\}_n$ satisfy the equation

$$
\sum_{n\geq 0} \mu_n\frac{z^n}{n!}=e^{\alpha z}\sum_{n\geq 0} \nu_n\frac{z^n}{n!},
$$
then
$$
\sum\limits_{n\geq 0} \nu_n z^n=J[z; b_n, \lambda_n]\Longrightarrow 
\sum\limits_{n\geq 0} \mu_n z^n=J[z; b_n+\alpha, \lambda_n].
$$
\end{lem}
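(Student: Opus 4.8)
The plan is to reduce Lemma~\ref{binomialtransform} to a short computation with weighted Motzkin paths via Flajolet's combinatorial theory of J-fractions, exactly the apparatus already used throughout the paper. First I would record the elementary reformulation of the hypothesis: extracting the coefficient of $z^n/n!$ from $\sum_{n\ge 0}\mu_n z^n/n! = e^{\alpha z}\sum_{n\ge 0}\nu_n z^n/n!$ shows the hypothesis is equivalent to the binomial-transform identity $\mu_n=\sum_{k=0}^n\binom{n}{k}\alpha^{n-k}\nu_k$ for all $n\ge 0$. Thus it suffices to prove that $\sum_{n\ge 0}\bigl(\sum_{k=0}^n\binom{n}{k}\alpha^{n-k}\nu_k\bigr)z^n = J[z;b_n+\alpha,\lambda_n]$ whenever $\sum_{n\ge 0}\nu_n z^n=J[z;b_n,\lambda_n]$.

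Next I would apply Flajolet's theorem: it gives $[z^n]\,J[z;b_n+\alpha,\lambda_n]=\sum_Q \mathrm{wt}(Q)$, the sum ranging over Motzkin paths $Q$ of length $n$ in which a level step at height $h$ carries weight $b_h+\alpha$ and the $\lambda$-weights are attached exactly as for $J[z;b_n,\lambda_n]$. Expanding each level-step weight $b_h+\alpha$ into its two summands amounts to marking each level step of $Q$ as \emph{old} (contributing $b_h$) or \emph{new} (contributing $\alpha$). Grouping the resulting objects first by the number $m$ of new level steps and then by the path $P$ obtained from $Q$ by deleting its $m$ new level steps, one observes that $P$ is a Motzkin path of length $n-m$ weighted precisely by the scheme defining $J[z;b_n,\lambda_n]$, so the paths $P$ of length $n-m$ contribute $\nu_{n-m}$ in total; and the number of $Q$'s producing a fixed $P$ is the number of ways to insert $m$ indistinguishable level steps into the $n-m+1$ gaps of $P$, namely $\binom{(n-m+1)+m-1}{m}=\binom{n}{m}$. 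Summing over $m$ yields $[z^n]\,J[z;b_n+\alpha,\lambda_n]=\sum_{m=0}^{n}\binom{n}{m}\alpha^m\nu_{n-m}=\mu_n$, which is the assertion.

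The only steps needing care are combinatorial bookkeeping rather than any genuine difficulty: one must note that inserting level steps never pushes the path below the axis (a level step preserves height, and $P$ is already a Motzkin path, so all partial heights remain nonnegative) and that the insertion count is the multiset coefficient $\binom{n}{m}$, not $\binom{n-m+1}{m}$, because a single gap may receive several new level steps. An equally short alternative, which I would mention but not develop, bypasses paths entirely: if $\mathcal{L}$ is the linear functional with moments $\mathcal{L}[x^k]=\nu_k$ and monic orthogonal polynomials $P_{n+1}(x)=(x-b_n)P_n(x)-\lambda_n P_{n-1}(x)$, then the translated functional $\widetilde{\mathcal{L}}[\,p\,]:=\mathcal{L}[\,p(\,\cdot+\alpha)\,]$ has moments $\widetilde{\mathcal{L}}[x^n]=\mathcal{L}[(x+\alpha)^n]=\mu_n$ and orthogonal polynomials $\widetilde{P}_n(x)=P_n(x-\alpha)$, whose recurrence is $\widetilde{P}_{n+1}(x)=\bigl(x-(b_n+\alpha)\bigr)\widetilde{P}_n(x)-\lambda_n\widetilde{P}_{n-1}(x)$; the classical equivalence between such three-term recurrences and J-fraction expansions of moment generating functions then gives $\sum_{n\ge 0}\mu_n z^n=J[z;b_n+\alpha,\lambda_n]$. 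Here the sole subtlety is the sign: one shifts the argument by $-\alpha$ inside the polynomials in order to shift each $b_n$ by $+\alpha$. Since this is precisely the translation lemma already recorded in \cite{Zeng93} and \cite{GJ83}, I would present whichever of the two arguments above is shorter in context.
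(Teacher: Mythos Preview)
Your proof is correct. Both arguments---the Motzkin-path expansion of the shifted level weights and the orthogonal-polynomial translation---are standard and valid; the bookkeeping you flag (that inserting level steps preserves nonnegativity, and that the insertion count is the multiset coefficient $\binom{n}{m}$) is handled correctly.

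Note, however, that the paper does not supply a proof of this lemma at all: it simply quotes the statement and refers the reader to \cite[Lemma~12]{Zeng93} and \cite[p.~307]{GJ83}. So there is no in-paper argument to compare against. Your orthogonal-polynomial sketch is essentially the proof one finds in those references, while the Motzkin-path argument is the natural Flajolet-style variant and fits well with the combinatorial machinery used elsewhere in the paper. Either would be an appropriate addition if the paper wished to be self-contained; your closing remark already acknowledges that this is exactly the translation lemma recorded in the cited sources.
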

\begin{proof}[Proof of Theorem~\ref{thmBgfexc}]
Since 
 $\exc=(\wexa-\fix)+\wexc$, see \cite[Eq. (4.5)]{SZ16},
we have 
$$
F_n(1, t,t,1/t,y)
=\sum_{\sigma\in\B_n}t^{\exc_B\sigma} y^{\neg\,\sigma}
$$
and  formula \eqref{CF-B} becomes
\begin{align}\label{Bleft}
\sum_{n\geq 0}\left(\sum_{\sigma\in\B_n}t^{\exc_B\sigma} y^{\neg\,\sigma}\right)z^n=J[z;b_n,\lambda_n],
\end{align}
where $b_n=(n+1)(1+yt)+n(t+y)$ and $\lambda_n=n^2\,(1+y)^2t$.
 By Lemma~\ref{binomialtransform} we derive  from \eqref{lag2} and \eqref{brenti} that $\sum_{n\ge 0}B_n(y,t) z^n$ has the same continued fraction expansion in \eqref{Bleft}. 
\end{proof}

%
%

%
%

\begin{proof}[Proof of Theorem~\ref{Bnegexc}]
By \eqref{SZ12eq2B}, we have 
$$
P^{(\cpeak, \exc)}(\S_n; w, t)=B_n(1, 1, t, 1, 1, w, 1).
$$
Specializing $(p, q, t, u, v, w, y)$ in Lemma~\ref{SZ12Bset} yields
 \begin{align*}
\sum_{n=0}^{\infty}P^{(\cpeak, \exc)}(\S_n; w, t)z^n=J[z; b_n,\lambda_n],
\end{align*}
where $ b_n=1+n\,(1+t)$ and  $\lambda_n=n^2\,tw$.
It follows that the series
\begin{align*}
\sum_{n\geq 0}P^{(\cpeak, \exc)}\left(\S_n;\frac{(1+y)^2t}{(y+t)(1+yt)}, \frac{y+t}{1+yt}\right)((1+yt)z)^n
 \end{align*}
 has the same continued fraction expansion for $\sum_{n\geq 0}B_n(y, t)z^n$ in \eqref{Bleft}.
 \end{proof}

\section*{Acknowledgement}
The first two named authors were supported by the China Scholarship Council.
The second author's  work was done during his visit at Institut Camille Jordan, Universit\'e Claude Bernard Lyon 1 in 2018-2019.  The first author was supported by the Israel Science Foundation (grant no. 1970/18) during the revision of this work.

An extended abstract of this paper will appear in the Proceedings of the 32nd Conference on
Formal Power Series and Algebraic Combinatorics (FPSAC'20).

\end{document}